\newcommand{\ep}{\varepsilon}
\newcommand{\R}{\mathbb{R}}
\def\coloneqq{\mathrel{\mathop:}=}%
\def\revcoloneqq{=\mathrel{\mathop:}}%
\newcommand{\BUC}{{\rm BUC\,}}
\newcommand{\USC}{{\rm USC\,}}
\newcommand{\Lip}{{\rm Lip\,}}
\theoremstyle{definition}
\newtheorem{Th}{Theorem}[section]
\newtheorem{Prop}[Th]{Proposition}
\newtheorem{Lem}[Th]{Lemma}
\newtheorem{Def}{Definition}
\newtheorem{Rem}{Remark}
\numberwithin{equation}{section}
\DeclareMathOperator*{\argmax}{arg\,max} 
\title[Rate of convergence in homogenization]{On the rate of convergence in homogenization \\ of time-fractional Hamilton-Jacobi equations}
\author{Hiroyoshi Mitake, Shoichi Sato}
\address{Graduate School of Mathematical Sciences, University of Tokyo, 3-8-1 Komaba,
Meguroku, Tokyo, Japan 153-8914}
\email{mitake@ms.u-tokyo.ac.jp, shoichi@ms.u-tokyo.ac.jp}
\thanks{HM was partially supported by the JSPS grants: KAKENHI 
\#22K03382, 
\#21H04431, 
\#20H01816, 
\#19K03580, 
\#19H00639. 
SS was supported by the Leading Graduate Course for Frontiers of Mathematical Sciences and Physics, The University of Tokyo, MEXT
}
\keywords{Caputo time-fractional derivatives, Hamilton-Jacobi equations, Homogenization, Viscosity solutions.}
\subjclass[2010]{
34K37, 
35B27,  
49L25} 
\date{\today}
\begin{document}
\maketitle

\begin{abstract}
Here, we consider periodic homogenization for time-fractional Hamilton--Jacobi equations. By using the perturbed test function method, we establish the convergence, and give estimates on a rate of convergence.
A main difficulty is the incompatibility between the function used in the doubling variable method, and the non-locality of the Caputo derivative.
Our approach is to provide a lemma to prove the rate of convergence without the doubling variable method with respect to the time variable, which is a key ingredient. 
\end{abstract}


\section{Introduction}

Let $T > 0$ and $\alpha \in (0, 1)$ be given constants and $\varepsilon > 0$ be a parameter.
We are concerned with time-fractional Hamilton-Jacobi equations:
\begin{equation} \label{TF_HJ_eqs}
\left \{
\begin{array}{ll}
\displaystyle \partial_{t}^{\alpha}u^{\varepsilon} + H \left( \frac{x}{\varepsilon}, Du^{\varepsilon} \right) \ =  0 & \mbox{in} \quad \mathbb{R}^N \times (0,T), \\
u^\varepsilon(x,0) = u_{0}(x) & \mbox{on} \quad \mathbb{R}^N.
\end{array}
\right.
\end{equation}
Here, $u^\varepsilon:\mathbb{R}^N \times [0,T] \to \mathbb{R}$ is an unknown function.
The Hamiltonian $H:\mathbb{R}^N \times \mathbb{R}^N \to \mathbb{R}$, the initial function $u_0:\mathbb{R}^N \to \mathbb{R}$ are given continuous functions, which satisfy
\begin{enumerate}
\item[(A1)]
 The function $H$ is uniformly coercive in the $y$-variable, i.e.,
\begin{equation*}
\displaystyle \lim_{r \rightarrow \infty} \inf \{ H(y, p) \mid y \in \mathbb{R}^N, |p| \geq r \} = \infty.
\end{equation*}
\item[(A2)]
 The function $y \mapsto H(y, p)$ is $\mathbb{Z}^N$-periodic, i.e.,
\begin{equation*}
H(y, p) = H(y+k, p) \quad \mbox{for} \, \, y, p \in \mathbb{R}^N, k \in \mathbb{Z}^N.
\end{equation*}
\item[(A3)] $u_0 \in \Lip(\mathbb{R}^N) \cap \BUC(\mathbb{R}^N)$, where we denote by $\BUC(\mathbb{R}^N)$ 
the set of all bounded, uniformly continuous functions on $\R^N$. 
\end{enumerate}
Moreover, we denote by $\partial_{t}^{\alpha}u^\varepsilon$ the \textit{Caputo fractional derivative} of $u^\varepsilon$ with respect to $t$, that is, 
\begin{equation*}
\displaystyle \partial_{t}^{\alpha}u^\varepsilon(x,t) \coloneqq 
\frac{1}{\Gamma(1 - \alpha)} \int_0^{t} (t-s)^{-\alpha} \partial_s u^\varepsilon(x,s) \,ds
\end{equation*}
for all $(x, t)\in \mathbb{R}^N \times (0,T)$, where $\Gamma$ is the Gamma function.
If we formally consider $\alpha=1$, then the Caputo fractional derivative coincides with the normal derivative with respect to $t$, and thus \eqref{TF_HJ_eqs} turns out to be the standard Hamilton--Jacobi equation. 

Fractional derivatives attracted great interest from both mathematics and applications within the last few decades, and developed in wide fields (see \cite{MK, NSY, KRY} for instance).  Studying differential equations with fractional derivatives is motivated by mathematical models that describe diffusion phenomena in complex media like fractals, which is sometimes called \textit{anomalous diffusion}. It has inspired further research on numerous related topics. 

The well-posedness of viscosity solutions to \eqref{TF_HJ_eqs} was established by \cite{GN, N, TY}. 
More precisely, a comparison principle, Perron's method, and stability results have been established in \cite{GN, N, TY}.  
We also mention that the equivalence of two weak solutions for linear uniformly parabolic equations  with Caputo's time-fractional derivatives was proved in \cite{GMS} by using the resolvent type approximation introduced by \cite{GLM}. 
In \cite{LTY}, the authors studied the regularity of viscosity solutions of \eqref{TF_HJ_eqs}, and got the large-time asymptotic result in some special settings. 


In our paper, we are interested in the asymptotic behavior of $u^\varepsilon$ as $\varepsilon \to 0$.
This singular limit problem is called ``\textit{homogenization problem}" with a background of the material science. 
Lions, Papanicolau and Varadhan in \cite{LPV} was the first who started to study homogenization for Hamilton--Jacobi type equations, and after the perturbed test function method was introduced by Evans \cite{Evans, Evans2}, there has been much literature concerning on homogenization for nonlinear partial differential equations. 
It was proved in \cite{LPV} that, when $\alpha=1$, under assumptions (A1)--(A3), $u^\ep$ converges to $\overline{u}$ locally uniformly on $\R^N \times [0,T]$ as $\ep \to 0$, 
and $\overline{u}$ solves the effective equation \eqref{TF_HJ} with $\alpha=1$. 
The effective Hamiltonian $\overline{H} \in C(\R^N)$ is determined in a nonlinear way by $H$ as following.  
For each $p \in \R^N$, it was shown in \cite{LPV} that there exists a unique constant $\overline{H}(p)\in \R$, which is called the \textit{effective Hamiltonian}, such that the stationary problem has a continuous viscosity solution to 
\begin{equation} \label{TF_CP}
H(y, p+Dv(y, p)) = \overline{H}(p) \ \mbox{in} \ \mathbb{T}^N. 
\end{equation}
If needed, we write $v=v(y,p)$ to clearly demonstrate the nonlinear dependence of $v$ on $p$. 
We call a problem to find a pair $(v(\cdot, p), \overline{H}(p)) \in \Lip(\mathbb{T}^N) \times \mathbb{R}$ to satisfy \eqref{TF_CP} the \textit{cell problem}. 
It is worth mentioning that in general $v(y,p)$ is not unique even up to additive constants.

Heuristically, owing to the two-scale  (i.e., $x$ and $\frac{x}{\varepsilon}$) asymptotic expansion, 
of solutions $u^\varepsilon$ to \eqref{TF_HJ_eqs} of the form
\begin{equation} \label{Two_scale_ex}
u^\varepsilon(x, t) = u^0\left(x, \frac{x}{\varepsilon}, t\right) + \varepsilon u^1\left(x, \frac{x}{\varepsilon}, t\right) + \varepsilon^2 u^2\left(x, \frac{x}{\varepsilon}, t\right) + \cdots, 
\end{equation} 
by plugging this into \eqref{TF_HJ_eqs}, and using the coercivity of Hamiltonian, we can naturally expect that 
\begin{equation*}
u^\varepsilon(x, t) = \overline{u}(x, t) + \varepsilon v\left(\frac{x}{\varepsilon}\right) + \mathcal{O}(\varepsilon^2).
\end{equation*}
Plugging this into \eqref{TF_HJ_eqs} again, 
letting $v$ be a solution of \eqref{TF_CP} with $p= D_x\overline{u}(x, t)$, 
we can expect that $u^\varepsilon$ converges to the limit function $\overline{u}$ as $\varepsilon \to 0$ which is a solution to
\begin{equation} \label{TF_HJ}
\left \{
\begin{array}{ll}
\displaystyle \partial_{t}^{\alpha}\overline{u} + \overline{H} ( D\overline{u} ) \ =  0 & \mbox{in} \, \, \mathbb{R}^N \times (0,T), \\
\overline{u}(x,0) = u_{0}(x) & \mbox{on} \, \, \mathbb{R}^N.
\end{array}
\right.
\end{equation}

%
%

Our main goal of this paper is to establish the convergence result of $u^\ep$ in $C(\R^N\times[0,T])$, and obtain a rate of convergence of $u^\ep$ to $u$, that is, an estimate for $\|u^\ep-u\|_{L^\infty(\R^N \times [0,T])}$ for any given $T>0$ with resect to $\ep$.  
We give two main results.
\begin{Th} \label{TF_conv_Main_Theorem_0}
Assume (A1) - (A3) hold.
Let $u^\varepsilon$ be the viscosity solution to \eqref{TF_HJ_eqs}.
The function $u^\varepsilon$ converges locally uniformly in $\mathbb{R}^N \times [0, T]$ to the function $\overline{u} \in C(\mathbb{R}^N \times [0, T])$ as $\varepsilon \to 0$, where $\overline{u}$ is the unique viscosity solution to \eqref{TF_HJ}.
\end{Th}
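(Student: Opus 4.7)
The plan is to combine the half-relaxed limits method of Barles--Perthame with Evans' perturbed test function, adapted to the Caputo setting. The central structural observation is that, since the corrector $v(\cdot,p)$ from the cell problem \eqref{TF_CP} depends only on the fast variable $y=x/\varepsilon$ and not on $t$, the Caputo derivative satisfies $\partial_t^\alpha\bigl(\varepsilon v(x/\varepsilon,p)\bigr)\equiv 0$. Consequently, the non-locality of $\partial_t^\alpha$ is invisible to the perturbation, and the classical scheme carries over at the level of the mere convergence statement (as opposed to the sharp rate, which is where the incompatibility flagged in the abstract arises).

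First, using barriers of the form $u_0(x)\pm C\,t^\alpha/\Gamma(1+\alpha)$, which solve $\partial_t^\alpha w=\pm C$, together with (A1) and (A3), I would establish uniform $L^\infty$ and Lipschitz-in-$x$ bounds on $\{u^\varepsilon\}$ on $\mathbb{R}^N\times[0,T]$, independent of $\varepsilon$; a modulus of continuity in $t$ near $t=0$ then follows by comparison with the same barriers, and for $t$ away from $0$ by translating in time and using the comparison principle of \cite{GN, N, TY}. These estimates let me define the bounded upper/lower semicontinuous envelopes
$$\overline{u}^{*}(x,t):=\limsup_{\varepsilon\to 0,\,(y,s)\to(x,t)} u^\varepsilon(y,s),\qquad \underline{u}_{*}(x,t):=\liminf_{\varepsilon\to 0,\,(y,s)\to(x,t)} u^\varepsilon(y,s),$$
with $\overline{u}^{*}(\cdot,0)=\underline{u}_{*}(\cdot,0)=u_0$.

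The core step is to verify that $\overline{u}^{*}$ (resp., $\underline{u}_{*}$) is a viscosity sub- (resp., super-) solution of \eqref{TF_HJ}; I sketch only the subsolution side. Let $\phi$ be a smooth test function such that $\overline{u}^{*}-\phi$ attains a strict local maximum at $(x_0,t_0)$ with $t_0>0$. Set $p:=D\phi(x_0,t_0)$ and let $v(\cdot,p)\in\Lip(\mathbb{T}^N)$ be a continuous viscosity solution of \eqref{TF_CP}. With the perturbed test function
$$\phi^\varepsilon(x,t):=\phi(x,t)+\varepsilon v(x/\varepsilon,\,p),$$
the function $u^\varepsilon-\phi^\varepsilon$ attains a local maximum at some $(x_\varepsilon,t_\varepsilon)\to(x_0,t_0)$. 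Writing the subsolution inequality for $u^\varepsilon$ and using $\partial_t^\alpha\phi^\varepsilon=\partial_t^\alpha\phi$ (this is the whole point), I would obtain, in the viscosity sense,
$$\partial_t^\alpha\phi(x_\varepsilon,t_\varepsilon) + H\!\bigl(x_\varepsilon/\varepsilon,\,D\phi(x_\varepsilon,t_\varepsilon)+Dv(x_\varepsilon/\varepsilon,p)\bigr)\le 0,$$
and the cell problem identity $H(y,p+Dv(y,p))=\overline{H}(p)$, combined with the continuity of $\overline{H}$ (ensured by (A1)), let me pass to the limit to conclude $\partial_t^\alpha\phi(x_0,t_0)+\overline{H}(D\phi(x_0,t_0))\le 0$.

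Once the half-relaxed limits are a sub- and super-solution of \eqref{TF_HJ} sharing the initial datum $u_0$, the comparison principle for \eqref{TF_HJ} established in \cite{GN, N, TY} gives $\overline{u}^{*}\le\underline{u}_{*}$, and the reverse inequality is automatic; uniqueness of $\overline{u}$ and locally uniform convergence follow simultaneously. The main technical obstacle I anticipate is the non-smoothness of the corrector $v$, which blocks a naive pointwise use of $Dv(x_\varepsilon/\varepsilon,p)$ above. The standard workaround is to replace $v$ by a mollification that is a smooth strict subsolution of the cell problem up to an error $\eta>0$, carry out the argument to obtain $\overline{H}(p)-\eta\le\partial_t^\alpha\phi(x_0,t_0)+\overline{H}(p)$ for each $\eta$, and then send $\eta\to 0$. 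This regularization is purely in the fast spatial variable, so it remains perfectly compatible with $\partial_t^\alpha$, which is exactly why the convergence theorem, unlike the rate, does not require the authors' new lemma.
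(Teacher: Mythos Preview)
Your overall strategy---half-relaxed limits combined with Evans' perturbed test function---is exactly the route the paper takes (Lemmas~\ref{TF_half_relaxed_lim} and~\ref{TF_half_relaxed_uni}), and your observation that the time-independent corrector $\varepsilon v(x/\varepsilon,p)$ is transparent to $\partial_t^\alpha$ is correct and is indeed why the convergence proof needs none of the special machinery developed for the rate.

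The genuine gap is in your workaround for the non-smooth corrector. To show $\overline{u}^*$ is a \emph{subsolution} of \eqref{TF_HJ}, after invoking the subsolution property of $u^\varepsilon$ you must bound $H\bigl(x_\varepsilon/\varepsilon,\,D\phi+Dv\bigr)$ from \emph{below} by $\overline{H}(p)$; this requires a smooth approximate \emph{supersolution} of the cell problem, not a subsolution as you wrote (and indeed your displayed conclusion $\overline{H}(p)-\eta\le\partial_t^\alpha\phi(x_0,t_0)+\overline{H}(p)$ is trivially true and does not yield the needed $\partial_t^\alpha\phi(x_0,t_0)+\overline{H}(p)\le 0$). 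For nonconvex $H$ satisfying only (A1)--(A3), smooth approximate supersolutions of \eqref{TF_CP} need not exist: the inf-sup formula $\overline{H}(p)=\inf_{\psi\in C^1}\sup_y H(y,p+D\psi)$ always supplies smooth approximate \emph{sub}solutions (which would serve the $u_*$ half), but the dual sup-inf identity can fail without convexity, and convolution-mollifying a Lipschitz supersolution does not in general preserve the supersolution property. The paper instead handles the non-smoothness, as in Evans' original argument, by \emph{doubling the spatial variable}: it maximizes
\[
u^{\varepsilon_m}(x,t)-\varphi(x,t)-\varepsilon_m v\!\left(\tfrac{y}{\varepsilon_m}\right)-\tfrac{|x-y|^2}{2a}
\]
over $(x,y,t)$, applies the subsolution property of $u^{\varepsilon_m}$ in $(x,t)$ and the \emph{viscosity} supersolution property of $v$ in $y$ separately (no smoothness of $v$ needed), controls $|x_{m,a}-y_{m,a}|/a$ via $\Lip(v)$, and sends $a\to0$ then $m\to\infty$. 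Two smaller points: under (A1)--(A3) alone the paper obtains only a uniform spatial modulus (Proposition~\ref{TF_uni_space_regularity}), and a uniform Lipschitz-in-$x$ estimate is explicitly flagged there as open; also, your ``translating in time'' argument for time-regularity is problematic because the Caputo equation is not time-translation invariant---the paper instead builds the $t^\alpha$-H\"older bound directly into a Perron construction (Proposition~\ref{TF_uni_time_regularity}).
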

\begin{Th} \label{TF_conv_Main_Theorem}
Assume (A1)-(A3) hold.
We additionally assume
\begin{enumerate}
\item[(A4)] There exists $C > 0$ such that
\begin{equation*}
|H(x, p) - H(y, q)| \leq C(|x - y| + |p - q|) \quad \mbox{for} \, \, x, y, p, q \in \mathbb{R}^N.
\end{equation*}
\item[(A5)] There exists $C > 0$ such that
\begin{equation*}
H(x, p) \geq C^{-1}|p| - C \quad \mbox{for} \, \, x, p \in \mathbb{R}^N.
\end{equation*}
\end{enumerate}
Let $u^\varepsilon$ and $\overline{u}$ be the viscosity solutions of (\ref{TF_HJ_eqs}) and (\ref{TF_HJ}), respectively.
For each $\nu \in (0, 1)$, there exists a constant $C > 0$ such that
\begin{equation*}
\displaystyle \sup_{(x, t) \in \mathbb{R}^N \times (0, T)} |u^\varepsilon(x, t) - \overline{u}(x, t)| \leq C\varepsilon^{\frac{1}{3(2-\nu)}}.
\end{equation*}
\end{Th}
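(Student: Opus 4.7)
The plan is to implement the perturbed test function method of Evans and to quantify every error carefully, but to avoid doubling the time variable---which would force me to evaluate the Caputo derivative of a time-penalty like $|t-s|^{2}/\eta$ at a max point, a computation incompatible with the non-locality of $\partial_t^{\alpha}$. Instead I will double only in space and dispatch the time variable via the key lemma announced in the abstract. Under (A1)--(A5), the cell problem \eqref{TF_CP} admits a solution $v(\cdot,p)\in\Lip(\T^N)$ whose Lipschitz norm and oscillation are bounded uniformly for $p$ in bounded sets, $\overline H$ is Lipschitz, and, comparing with suitable barriers built from (A3)--(A5), both $u^{\varepsilon}$ and $\overline u$ are bounded and Lipschitz in $x$ uniformly in $\varepsilon$. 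Their time regularity is only H\"older, and the parameter $\nu\in(0,1)$ in the final exponent originates from converting a fractional-in-time bound into pointwise information.

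Next I regularize $\overline u$ in the space variable only by a sup-convolution with parameter $r>0$,
\begin{equation*}
\overline u^{\,r}(x,t)\coloneqq \sup_{y\in\R^N}\Bigl(\overline u(y,t)-\frac{|x-y|^{2}}{2r}\Bigr),
\end{equation*}
so that $\overline u^{\,r}$ is semi-concave in $x$ uniformly in $t$, satisfies $|\overline u^{\,r}-\overline u|\leq Cr$, and, since sup-convolution in $x$ commutes with $\partial_t^{\alpha}$, remains an $O(r)$-perturbed subsolution of \eqref{TF_HJ}. I then form the perturbed test function
\begin{equation*}
\phi^{\varepsilon}(x,t)\coloneqq \overline u^{\,r}(x,t)+\varepsilon\, v\Bigl(\frac{x}{\varepsilon},\,D_x\overline u^{\,r}(x,t)\Bigr),
\end{equation*}
and, using (A4), the cell problem identity \eqref{TF_CP}, and the Lipschitz dependence of $v(\cdot,p)$ on $p$, verify that $\phi^{\varepsilon}$ is a viscosity subsolution of \eqref{TF_HJ_eqs} modulo an error of size $O(\varepsilon)+O(\varepsilon/r)$. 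A symmetric inf-convolution construction furnishes a matching supersolution.

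The comparison step is the heart of the argument. I apply the key lemma to the pair $(\phi^{\varepsilon},u^{\varepsilon})$ with the space-doubling penalty $|x-y|^{2}/(2\sigma)$ and \emph{no} time-doubling; the lemma yields a pointwise contradiction at a maximum point of $\phi^{\varepsilon}-u^{\varepsilon}$ and replaces the missing time-penalty by a $\sigma$- and $\nu$-dependent loss. Collecting all errors one obtains an inequality of the form
\begin{equation*}
\sup_{\R^N\times[0,T]}\bigl(\overline u^{\,r}-u^{\varepsilon}\bigr)\leq C\Bigl(r+\frac{\varepsilon}{r}+\mathcal E_{\nu}(\sigma,\varepsilon)\Bigr),
\end{equation*}
and joint optimization over $r$ and $\sigma$ yields the exponent $\tfrac{1}{3(2-\nu)}$ in Theorem~\ref{TF_conv_Main_Theorem}; the reverse inequality follows symmetrically from the inf-convolution side. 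The main obstacle is the space-only lemma itself: one must extract at the time-maximum of the space-doubled functional a useful pointwise bound on $\partial_t^{\alpha}(\phi^{\varepsilon}-u^{\varepsilon})$ from only semiconcavity/semiconvexity in $x$ and the viscosity inequalities, so that the entire effect of a would-be time-doubling penalty is absorbed into the controlled loss $\mathcal E_{\nu}$.
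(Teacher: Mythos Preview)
Your proposal has two genuine gaps that prevent it from going through as written.

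\textbf{The corrector does not depend Lipschitz on $p$.} You write the perturbed test function as $\phi^{\varepsilon}(x,t)=\overline u^{\,r}(x,t)+\varepsilon\, v(x/\varepsilon,\,D_x\overline u^{\,r}(x,t))$ and invoke ``the Lipschitz dependence of $v(\cdot,p)$ on $p$''. But for the cell problem \eqref{TF_CP} the corrector $v(\cdot,p)$ is not unique even up to constants, and in general there is no selection $p\mapsto v(\cdot,p)$ that is Lipschitz---or even continuous---in $p$. This is precisely why the paper (following Capuzzo--Dolcetta and Ishii) replaces $v$ by the \emph{discounted} corrector $v^{\lambda}$ solving \eqref{TF_ACP_2}: Lemma~\ref{TF_ACP_Property}(a) gives $\lambda|v^{\lambda}(y,p)-v^{\lambda}(y,q)|\le C|p-q|$, at the price of the extra error $|\lambda v^{\lambda}+\overline H(p)|\le C(1+|p|)\lambda$ from Lemma~\ref{TF_ACP_Property}(b). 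Moreover, even after sup-convolution, $D_x\overline u^{\,r}$ is only defined almost everywhere and is not continuous, so plugging it into the second slot of $v$ does not yield a function to which the viscosity machinery applies. The paper circumvents both issues simultaneously by never inserting a gradient into the corrector: it uses $v^{\lambda}(x/\varepsilon,(x-y)/\varepsilon^{\beta})$, with the penalization variable $(x-y)/\varepsilon^{\beta}$ playing the role of $p$.

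\textbf{The uniform spatial Lipschitz bound on $u^{\varepsilon}$ is not available.} You assert that ``both $u^{\varepsilon}$ and $\overline u$ are \ldots\ Lipschitz in $x$ uniformly in $\varepsilon$''. For $\overline u$ this is true, but for $u^{\varepsilon}$ it is open: the Remark after Proposition~\ref{TF_uni_space_regularity} explains that the standard argument only gives a Lipschitz constant of order $\varepsilon^{-1}$. What the paper actually proves (Proposition~\ref{TF_uni_space_regularity}) is a uniform $\nu$-H\"older bound $|u^{\varepsilon}(x,t)-u^{\varepsilon}(y,t)|\le C|x-y|^{\nu}$ for each $\nu\in(0,1)$, and this spatial H\"older exponent---not a ``fractional-in-time bound'' as you suggest---is the source of the $\nu$ in the final rate $\varepsilon^{1/(3(2-\nu))}$. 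It enters through the estimate $|\hat x-\hat y|\le C\varepsilon^{\beta/(2-\nu)}$ in Step~1 of Lemma~\ref{TF_Conv_Lem}.

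In short, to make your outline work you would need to replace $v$ by $v^{\lambda}$ and feed it a doubling variable rather than a gradient, at which point you have essentially reproduced the paper's auxiliary function $\Phi$ with its barrier $Rt^{\alpha}/\Gamma(1+\alpha)$ in place of a time-penalty.
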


By the standard perturbed test function method introduced by \cite{Evans}, it is not hard to obtain Theorem \ref{TF_conv_Main_Theorem_0}.
To obtain Theorem \ref{TF_conv_Main_Theorem}, we use the method introduced by \cite{CDI} which is a combination of the perturbed test function method and the discount approximation.
A main difficulty we face here is the incompatibility between the doubling variable method which is often used in the theory of viscosity solutions and the non-locality of the Caputo derivative. 
More precisely, setting $\varphi(t,s)=(t-s)^2$, usually we have $\varphi_{t}(t,s)=-\varphi_{s}(t,s)$, which is an elementary, but important trick in the theory of viscosity solutions, however, we have 
\[
\partial_{t}^{\alpha}\varphi(t,s)\not=-\partial_{s}^{\alpha}\varphi(t,s)\quad\text{for all} \ t, s>0. 
\] 
Our approach is to provide a lemma to prove Theorem \ref{TF_conv_Main_Theorem} without using the doubling variable method with respect to time variable (see Lemma \ref{TF_Conv_Lem}), which is our key ingredient of this paper. 
It is inspired by \cite[Lemma 3.4]{GN}.

The study of a rate of convergence in homogenization of Hamilton--Jacobi equations 
was started by Capuzzo-Dolcetta and Ishii \cite{CDI}, 
and in recent years, there has been much interest on the optimal convergence rate. 
Mitake, Tran, and Yu \cite{MTY} was the first who established the optimal convergence rate
for convex Hamilton--Jacobi equations with several conditional settings 
by using the representation formula  for $u^\ep$ from optimal control theory, and weak KAM methods. 
Then, Cooperman \cite{Cooper1} obtained a near optimal convergence rate $|u^\ep(x,t)-\overline{u}(x,t)|\le C\log(C+e^{-1}t)$ when $n\ge3$ for general convex settings by using a theorem of Alexander \cite{A},  originally proved in the context of first-passage percolation. 
Finally, Tran and Yu \cite{TrY} has established the optimal rate $\mathcal{O}(\ep)$ by using an argument of Burago \cite{B}, which concludes the study of this whole program in the convex setting. 
It is worth mentioning that the best known result on a rate of convergence on $u^\ep$ for the stationary Hamilton--Jacobi equation with the general nonconvex Hamiltonian is still $\mathcal{O}(\varepsilon^\frac{1}{3})$ obtained by \cite{CDI}. 
The argument in \cite{CDI} can be easily adapted to \eqref{TF_HJ_eqs} with $\alpha=1$
(see \cite[Theorem 4.37]{T}). 
We also refer to \cite{Tu, HJ, NT} for other development on this subject. 
In all works \cite{MTY, Tu, Cooper1, TrY, HJ, NT}, the argument relies on the optimal control formula 
for $u^\ep$, and therefore it is seemingly rather challenging to obtain the optimal rate of convergence 
of \eqref{TF_HJ_eqs} for $0 < \alpha < 1$, which remains completely open.

\medskip
\noindent
\textbf{Organization of the paper}. 
The paper is organized as follows. 
In Section 2, we recall the definition of viscosity solutions to \eqref{TF_HJ_eqs}, 
and give several basic results on time-fractional Hamilton--Jacobi equations in the theory of viscosity solutions. 
In Section 3, we give regularity results of viscosity solutions to \eqref{TF_HJ_eqs}, and \eqref{TF_HJ}. 
Sections 4 and 5 are devoted to prove the convergence of $u^\ep$, Theorem \ref{TF_conv_Main_Theorem_0}, 
and a convergence rate of $u^\varepsilon$ to $\overline{u}$ in $L^\infty$, Theorem \ref{TF_conv_Main_Theorem}.

\section{Preliminaries}

In this section, we recall the definition of viscosity solutions. 
First, we give several elementary facts of the Caputo derivative. 
These are well-known, but we give proofs to make the paper self-contained. 

\begin{Prop} \label{Weak_Caputo}
Let $f:[0, T] \mapsto \mathbb{R}$ be a function such that $f \in C^1((0, T]) \cap C([0, T])$ and $f^\prime \in L^1(0, T)$.
Then,
\begin{align}
\partial_t^\alpha f(t) &= \frac{f(t) - f(0)}{t^\alpha \Gamma(1 - \alpha)} + \frac{\alpha}{\Gamma(1 - \alpha)} \int_0^t \frac{f(t) - f(t-\tau)}{\tau^{\alpha + 1}} d\tau \label{Caputo_1}\\
&= \frac{\alpha}{\Gamma(1 - \alpha)} \int_{-\infty}^t \frac{f(t) - \tilde{f}(\xi)}{|t - \xi|^{\alpha + 1}} d\xi \notag
\end{align}
for any $t \in (0, T]$, where $\tilde{f}$ is defined by
\begin{equation*}
\tilde{f}(t) \coloneqq 
\left\{ 
\begin{array}{ll}
f(t) & \mbox{in} \, \, (0, T], \\
f(0) & \mbox{in} \, \, (-\infty, 0].
\end{array} \right.
\end{equation*}
\end{Prop}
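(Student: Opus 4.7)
The plan is to obtain both identities directly from the definition of $\partial_t^\alpha f$ by a careful integration by parts, followed by a change of variables. The only real subtlety is that the kernel $(t-s)^{-\alpha}$ is singular at $s = t$, so the integration by parts must be justified by a limiting argument and the boundary term at $s = t$ must be shown to vanish using the regularity of $f$.

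Concretely, I would start from
\[
\partial_t^\alpha f(t) = \frac{1}{\Gamma(1-\alpha)} \int_0^t (t-s)^{-\alpha} f'(s)\,ds
\]
and rewrite $f'(s) = \frac{d}{ds}[f(s) - f(t)]$ in order to produce a factor that vanishes at $s = t$ and will cancel the singularity. Integrating by parts on $[0, t - \delta]$ and then letting $\delta \to 0^+$, I would check that the boundary contribution at $s = t - \delta$ is of order $\delta^{1-\alpha} \to 0$ (using $f \in C^1$ near $t$, so $|f(s) - f(t)| = O(t-s)$ and $\alpha < 1$), while the boundary contribution at $s = 0$ produces the term $t^{-\alpha}(f(t) - f(0))$. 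The remaining integral becomes $\alpha \int_0^t (t-s)^{-\alpha-1}(f(t) - f(s))\,ds$, which after substituting $\tau = t - s$ yields exactly \eqref{Caputo_1}. The hypothesis $f' \in L^1(0,T)$ ensures that the original integral converges absolutely and that the limits above are valid.

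For the second equality I would simply rewrite the first. The integral over $\tau \in (0, t)$ becomes, under $\xi = t - \tau$,
\[
\int_0^t \frac{f(t) - f(t-\tau)}{\tau^{\alpha+1}}\,d\tau = \int_0^t \frac{f(t) - \tilde f(\xi)}{|t - \xi|^{\alpha+1}}\,d\xi,
\]
and the prefactor $(f(t)-f(0))/(t^\alpha \Gamma(1-\alpha))$ can be absorbed into the same form by noting that $\int_{-\infty}^0 |t-\xi|^{-\alpha-1}\,d\xi = 1/(\alpha t^\alpha)$, while $\tilde f(\xi) = f(0)$ on $(-\infty, 0]$. Combining these two pieces then gives the integral over $(-\infty, t)$.

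The main obstacle is purely the rigorous justification of the integration by parts near the singularity, which is why I would carry it out on $[0, t-\delta]$ and pass to the limit, using $f \in C^1((0, T])$ to control the boundary term. Everything else is a bookkeeping exercise in substitution.
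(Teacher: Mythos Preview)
Your proposal is correct and follows essentially the same approach as the paper: both argue by integration by parts with a cutoff near the singularity $s=t$ (equivalently $\tau=0$) and then pass to the limit, using $f\in C^1$ near $t$ to kill the boundary term, and both obtain the second identity by splitting $(-\infty,t)=(-\infty,0]\cup(0,t)$ and evaluating $\int_{-\infty}^0|t-\xi|^{-\alpha-1}\,d\xi=1/(\alpha t^\alpha)$. The only cosmetic difference is that the paper first substitutes $\tau=t-s$ and then adds and subtracts $\alpha\int_\varepsilon^t f(t)\tau^{-\alpha-1}\,d\tau$ to tame the divergent boundary term $f(t-\varepsilon)/\varepsilon^\alpha$, whereas you build this cancellation in from the start by writing $f'(s)=\tfrac{d}{ds}[f(s)-f(t)]$; the two computations are equivalent.
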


\begin{proof}
For $\varepsilon > 0$, using the integration by parts, we have
\begin{align*}
\int_0^{t-\varepsilon} (t-s)^{-\alpha} \partial_s f(s) ds &= \int_\varepsilon^t \tau^{-\alpha} \partial_{\tau}(-f(t - \tau)) d\tau \\
&= [\tau^{-\alpha}(-f(t-\tau))]_\varepsilon^t - \int_\varepsilon^t -\alpha \tau^{-(1+\alpha)} (-f(t - \tau)) d\tau \\
&= -\frac{f(0)}{t^\alpha} + \frac{f(t - \varepsilon)}{\varepsilon^\alpha} + \alpha \int_\varepsilon^t \frac{-f(t - \tau)}{\tau^{1+\alpha}} d\tau.
\end{align*}
Note that
\begin{equation*}
\alpha \int_\varepsilon^t \frac{f(t)}{\tau^{1+\alpha}} d\tau = - \frac{f(t)}{t^\alpha} + \frac{f(t)}{\varepsilon^\alpha}.
\end{equation*}
Thus, 
\begin{align*}
\int_0^{t-\varepsilon} (t-s)^{-\alpha} \partial_s f(s) ds &= \frac{f(t) - f(0)}{t^\alpha} + \alpha \int_\varepsilon^t \frac{f(t) - f(t - \tau)}{\tau^{1+\alpha}} d\tau - \frac{f(t) - f(t-\varepsilon)}{\varepsilon^\alpha}.
\end{align*}
Using the smoothness of $f$ and taking the limit as $\varepsilon \to 0$ above, we get \eqref{Caputo_1}.

On the other hand, using the change of variables, we obtain
\begin{align*}
\int_{-\infty}^t \frac{f(t) - \tilde{f}(\xi)}{|t - \xi|^{\alpha + 1}} d\xi 
&= \int_{-\infty}^0 \frac{f(t) - f(0)}{|t - \xi|^{\alpha + 1}} d\xi + \int_0^t \frac{f(t) - f(\xi)}{|t - \xi|^{\alpha + 1}} d\xi \\
&= \frac{f(t) - f(0)}{\alpha t^\alpha} + \int_0^t \frac{f(t) - f(t-\tau)}{\tau^{\alpha + 1}} d\tau,
\end{align*}
which completes the proof.
\end{proof}

From the observation in Proposition \ref{Weak_Caputo}, we set
\begin{align*}
&\displaystyle J[f](t) \coloneqq \frac{f(t) - f(0)}{t^\alpha \Gamma(1 - \alpha)}, \\
&\displaystyle K_{(a, b)}[f](t) \coloneqq \frac{\alpha}{\Gamma(1 - \alpha)} \int_a^b \frac{f(t) - f(t-\tau)}{\tau^{\alpha + 1}} d\tau
\end{align*}
for $t \in (0, T]$ and $0 \leq a < b \leq t$.

\begin{Prop}\label{K_conti}
Let $f \in C^1((0, T]) \cap C([0, T])$.
For any $t \in (0, T]$, the function $K_{(0, t)}[f](t)$ exists and continuous in $(0, T]$.
\end{Prop}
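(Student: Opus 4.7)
The plan is to verify existence at a point and continuity in $t$ by a standard split-and-estimate technique tailored to the singularity of $\tau^{-(\alpha+1)}$ at $\tau = 0$. The only mechanism in play is that $f$ is $C^1$ away from $0$, so the numerator $f(t) - f(t-\tau)$ is $O(\tau)$ as $\tau \downarrow 0$, which tames the kernel to an integrable $O(\tau^{-\alpha})$.

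First I would fix $t \in (0,T]$ and show that $K_{(0,t)}[f](t)$ is finite. Since $f' \in C((0,T])$, there exists $M_t > 0$ with $|f'| \leq M_t$ on $[t/2, t]$, so the mean value theorem gives $|f(t) - f(t-\tau)| \leq M_t \tau$ for $\tau \in (0, t/2]$. Hence the integrand is controlled by $M_t \tau^{-\alpha}$ on $(0, t/2]$, which is integrable since $\alpha < 1$, and on $[t/2, t]$ the integrand is bounded by continuity. This establishes existence.

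Next I would fix $t_0 \in (0,T]$ and prove continuity there. Restrict to $t$ in the neighborhood $|t - t_0| \leq t_0/4$, and for any $\delta \in (0, t_0/4)$ observe that for $\tau \in (0, \delta]$ the interval $[t-\tau, t]$ lies in $[t_0/2, T]$, on which $M := \sup |f'|$ is finite. I split
$$K_{(0,t)}[f](t) = \frac{\alpha}{\Gamma(1-\alpha)} \int_0^\delta \frac{f(t)-f(t-\tau)}{\tau^{\alpha+1}}\,d\tau + \frac{\alpha}{\Gamma(1-\alpha)} \int_\delta^t \frac{f(t)-f(t-\tau)}{\tau^{\alpha+1}}\,d\tau.$$
The first piece is bounded in absolute value, uniformly in such $t$, by $\frac{\alpha M}{(1-\alpha)\Gamma(1-\alpha)} \delta^{1-\alpha}$, which vanishes as $\delta \to 0$. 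For the second piece, the integrand $(t,\tau) \mapsto (f(t)-f(t-\tau))/\tau^{\alpha+1}$ is jointly continuous on the compact region where $\tau \geq \delta$ and is dominated by $2 \|f\|_{L^\infty(0,T)}/\delta^{\alpha+1}$; the dominated convergence theorem, together with continuity of the upper limit in $t$, yields convergence to the corresponding integral at $t_0$ as $t \to t_0$. Taking $\limsup_{t \to t_0}$ and then sending $\delta \downarrow 0$ closes the argument.

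The main obstacle is really bookkeeping rather than substance: one must choose the cutoff $\delta$ and the neighborhood of $t_0$ simultaneously so that $f'$ admits a uniform bound on all relevant intervals $[t-\tau, t]$, and one must track the moving upper limit $t$ of integration as $t$ varies. Both issues are handled by staying in a neighborhood of $t_0$ bounded away from $0$ and strictly above $\delta$, which is possible precisely because $t_0 > 0$.
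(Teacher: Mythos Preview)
Your proof is correct and follows essentially the same approach as the paper's: both split the integral near and away from $\tau=0$, use the local $C^1$ bound on $f'$ to control the near-$0$ piece by $C\tau^{-\alpha}$, bound the far piece by $2\|f\|_\infty/\tau^{\alpha+1}$, and invoke dominated convergence. The only cosmetic difference is that the paper packages the two pieces into a single dominating function and applies DCT once, whereas you carry out an explicit $\varepsilon/3$-style argument with the auxiliary cutoff $\delta$; the underlying mechanism is identical.
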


\begin{proof}
For any $s \in (0, T]$, we set 
\begin{equation*}
g_s(\tau) \coloneqq \frac{f(s) - f(s-\tau)}{\tau^{\alpha + 1}} \chi_{(0, s)}(\tau) \quad \mbox{for} \, \, \tau \in (0, s),
\end{equation*}
where $\chi_I$ is the characteristic function, i.e., $\chi_{I}(\tau) = 1$ if $\tau \in I$ and $\chi_{I}(\tau) = 0$ if $\tau \notin I$ for an interval $I$.
Fix $t \in (0, T]$.
It suffices to prove that $g_s \to g_t$ in $L^1(0, T)$ as $s \to t$. 
Let $\rho\in(0,\frac{t}{2})$, and for $s \in (t - \rho, \min \{t + \rho, T \})$, we have
\begin{align*}
|g_s(\tau)| &\leq \frac{|f(s) - f(s-\tau)|}{\tau^{\alpha + 1}} \chi_{(0, \rho)}(\tau) + \frac{|f(s) - f(s-\tau)|}{\tau^{\alpha + 1}} \chi_{(\rho, s)}(\tau) \\
&\leq \frac{\sup_{\tau \in (t-2\rho, \min \{t + \rho, T \})} |f^\prime(\tau)|}{\tau^{\alpha}} \chi_{(0, \rho)}(\tau) + \frac{2\|f\|_\infty}{\tau^{\alpha + 1}} \chi_{(\rho, T)}(\tau) \quad \mbox{for} \, \, \tau \in (0, T).
\end{align*}
The right hand side is integrable on $(0, T)$.
Thus, using the dominated convergence theorem, we get the desired result.
\end{proof}

Now, we define the definition of viscosity solutions to \eqref{TF_HJ_eqs}.

\begin{Def} \label{Def_vis_sol}
An upper semicontinuous function $u^\varepsilon : \mathbb{R}^N \times [0, T] \mapsto \mathbb{R}$ is said to be a viscosity subsolution of (\ref{TF_HJ_eqs}) if for any $\varphi \in C^1(\mathbb{R}^N \times (0, T]) \cap C(\mathbb{R}^N \times [0, T])$ one has
\begin{equation*}
\displaystyle J[\varphi](\hat{x}, \hat{t}) + K_{(0, \hat{t})}[\varphi](\hat{x}, \hat{t}) + H \left( \frac{\hat{x}}{\varepsilon}, D\varphi(\hat{x}, \hat{t}) \right) \leq 0,
\end{equation*}
whenever $u^\varepsilon - \varphi$ attains a maximum at $(\hat{x}, \hat{t}) \in \mathbb{R}^N \times (0, T]$ over $\mathbb{R}^N \times [0, T]$ and $u^\varepsilon(\cdot, 0) \leq u_0$ on $\mathbb{R}^N$.

Similarly, a lower semicontinuous function $u^\varepsilon : \mathbb{R}^N \times [0, T] \mapsto \mathbb{R}$ is said to be a viscosity supersolution of (\ref{TF_HJ_eqs}) if for any $\varphi \in C^1(\mathbb{R}^N \times (0, T]) \cap C(\mathbb{R}^N \times [0, T])$ one has
\begin{equation*}
\displaystyle J[\varphi](\hat{x}, \hat{t}) + K_{(0, \hat{t})}[\varphi](\hat{x}, \hat{t}) + H \left( \frac{\hat{x}}{\varepsilon}, D\varphi(\hat{x}, \hat{t}) \right) \geq 0,
\end{equation*}
whenever $u^\varepsilon - \varphi$ attains a minimum at $(\hat{x}, \hat{t}) \in \mathbb{R}^N \times (0, T]$ over $\mathbb{R}^N \times [0, T]$ and $u^\varepsilon(\cdot, 0) \geq u_0$ on $\mathbb{R}^N$.

Finally, we call $u^\varepsilon \in C(\mathbb{R}^N \times [0, T])$ a viscosity solution of (\ref{TF_HJ_eqs}) if $u^\varepsilon$ is both a viscosity subsolution and supersolution of (\ref{TF_HJ_eqs}).
\end{Def}

We give several equivalent conditions for viscosity subsolutions/supersolutions to \eqref{TF_HJ_eqs}, which are  often used in Sections 3--5.

\begin{Prop} \label{vissol_equivalent}
For any $\varepsilon > 0$, let $u^\varepsilon : \mathbb{R}^N \times [0, T] \mapsto \mathbb{R}$ be an upper semicontinuous function.
Then, the following statements are equivalent.
\begin{enumerate}
\item [(a)] $u^\varepsilon$ is a viscosity subsolution of (\ref{TF_HJ_eqs}).
\item [(b)] For any $\varphi \in C^1(\mathbb{R}^N \times (0, T]) \cap C(\mathbb{R}^N \times [0, T])$ one has
\begin{equation*}
\displaystyle J[u^\varepsilon](\hat{x}, \hat{t}) + K_{(0, \rho)}[\varphi](\hat{x}, \hat{t}) + K_{(\rho, \hat{t})}[u^\varepsilon](\hat{x}, \hat{t}) + H \left( \frac{\hat{x}}{\varepsilon}, D\varphi(\hat{x}, \hat{t}) \right) \leq 0 \end{equation*}
for $0 < \rho < \hat{t}$, 
whenever $u^\varepsilon - \varphi$ attains a maximum at $(\hat{x}, \hat{t}) \in \mathbb{R}^N \times (0, T]$ over $\mathbb{R}^N \times [0, T]$ and $u^\varepsilon(\cdot, 0) \leq u_0$ on $\mathbb{R}^N$.
\item [(c)] $K_{(0, \hat{t})}[u^\varepsilon](\hat{x}, \hat{t})$ exists and for any $\varphi \in C^1(\mathbb{R}^N \times (0, T]) \cap C(\mathbb{R}^N \times [0, T])$ one has
\begin{equation*}
\displaystyle J[u^\varepsilon](\hat{x}, \hat{t}) + K_{(0, \hat{t})}[u^\varepsilon](\hat{x}, \hat{t}) + H \left( \frac{\hat{x}}{\varepsilon}, D\varphi(\hat{x}, \hat{t}) \right) \leq 0,
\end{equation*}
whenever $u^\varepsilon - \varphi$ attains a local maximum at $(\hat{x}, \hat{t}) \in \mathbb{R}^N \times (0, T]$ over $\mathbb{R}^N \times [0, T]$ and $u^\varepsilon(\cdot, 0) \leq u_0$ on $\mathbb{R}^N$.
\end{enumerate}
Similarly, we obtain equivalent conditions for viscosity supersolutions. 
\end{Prop}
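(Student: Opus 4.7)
I will prove the equivalence via the cycle (c) $\Rightarrow$ (a) $\Rightarrow$ (b) $\Rightarrow$ (c). The underlying elementary fact is that whenever $u^\varepsilon - \varphi$ attains a maximum at $(\hat x, \hat t)$ one has
\[
\varphi(\hat x, \hat t) - \varphi(\hat x, s) \leq u^\varepsilon(\hat x, \hat t) - u^\varepsilon(\hat x, s) \qquad \text{for every } s \in [0, \hat t],
\]
and the positivity of the weight $\tau^{-\alpha-1}$ in the Caputo integrals then gives $J[\varphi](\hat x, \hat t) \leq J[u^\varepsilon](\hat x, \hat t)$ and $K_{(a, b)}[\varphi](\hat x, \hat t) \leq K_{(a, b)}[u^\varepsilon](\hat x, \hat t)$ for every $0 \leq a < b \leq \hat t$. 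The implication (c) $\Rightarrow$ (a) is then immediate: a global max is a local max, so (c) supplies $J[u^\varepsilon] + K_{(0, \hat t)}[u^\varepsilon] + H \leq 0$, and replacing the $u^\varepsilon$-quantities by the smaller $\varphi$-quantities preserves the inequality.

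\textbf{(a) $\Rightarrow$ (b).} After normalizing $\varphi(\hat x, \hat t) = u^\varepsilon(\hat x, \hat t)$ so that $\varphi \geq u^\varepsilon$, my plan is to graft a smooth upper approximation of $u^\varepsilon$ onto $\varphi$ outside a neighborhood of the time level $\hat t$. Fix $\rho \in (0, \hat t)$, pick a smooth cutoff $\eta_\delta$ with $\eta_\delta = 1$ on $[\hat t - \rho, \hat t]$ and $\eta_\delta = 0$ on $[0, \hat t - \rho - \delta]$, and choose $w^\sigma \in C^1$ with $w^\sigma \geq u^\varepsilon$ and $w^\sigma \to u^\varepsilon$ pointwise as $\sigma \to 0$ (obtained by sup-convolution at scale $\sigma$, mollified at a much smaller scale, plus a vanishing additive constant). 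Then $\psi_{\sigma, \delta} \coloneqq \eta_\delta \varphi + (1 - \eta_\delta) w^\sigma$ is $C^1$, dominates $u^\varepsilon$ with equality at $(\hat x, \hat t)$, and satisfies $D\psi_{\sigma, \delta}(\hat x, \hat t) = D\varphi(\hat x, \hat t)$. Applying (a) to $\psi_{\sigma, \delta}$, splitting $K_{(0, \hat t)}[\psi_{\sigma, \delta}]$ into contributions from $(0, \rho - \delta)$, the transition $(\rho - \delta, \rho)$, and $(\rho, \hat t)$, and sending $\sigma \to 0$ (dominated convergence, using uniform boundedness of $w^\sigma$) followed by $\delta \to 0$ (the transition piece is $O(\delta)$) yields the inequality of (b).

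\textbf{(b) $\Rightarrow$ (c).} First, I promote the local max to a global one while keeping the gradient fixed at $(\hat x, \hat t)$: for $M$ large, $\tilde\varphi \coloneqq \varphi + M|y - \hat x|^4 + M(s - \hat t)^4$ satisfies $D\tilde\varphi(\hat x, \hat t) = D\varphi(\hat x, \hat t)$ and makes $u^\varepsilon - \tilde\varphi$ globally maximal at $(\hat x, \hat t)$. I apply (b) with $\tilde\varphi$ for each $\rho \in (0, \hat t)$ and send $\rho \to 0$. Since $\tilde\varphi \in C^1$, $|K_{(0, \rho)}[\tilde\varphi]| \leq C\rho^{1-\alpha} \to 0$. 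The integrand $g(\tau) \coloneqq \tau^{-\alpha-1}\bigl(u^\varepsilon(\hat x, \hat t) - u^\varepsilon(\hat x, \hat t - \tau)\bigr)$ satisfies $g(\tau) \geq -C\tau^{-\alpha}$ near $\tau = 0$ (from the max condition against the $C^1$ function $\tilde\varphi$), so $g^- \in L^1(0, \hat t)$, and the uniform upper bound on $\int_\rho^{\hat t} g \, d\tau$ supplied by (b) forces $g^+ \in L^1(0, \hat t)$ as well. Hence $K_{(0, \hat t)}[u^\varepsilon]$ exists and equals $\lim_{\rho \to 0} K_{(\rho, \hat t)}[u^\varepsilon]$ by monotone/dominated convergence, and passing to the limit in (b) gives (c). The main obstacle is the $C^1$ upper approximation $w^\sigma$ in (a) $\Rightarrow$ (b): since $u^\varepsilon$ is only USC, straightforward mollification may go below it and sup-convolutions alone are only semi-concave, but the combined sup-convolution plus mollification at a smaller scale plus vanishing constant resolves this, after which the remainder is careful bookkeeping of the $J$- and $K$-terms.
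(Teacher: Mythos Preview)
Your proof is correct and follows essentially the same strategy as the paper's: the cycle (c)$\Rightarrow$(a)$\Rightarrow$(b)$\Rightarrow$(c), the grafting of a smooth upper approximation of $u^\varepsilon$ onto $\varphi$ away from $(\hat x,\hat t)$ in (a)$\Rightarrow$(b), and the positive/negative-part decomposition plus monotone convergence for (b)$\Rightarrow$(c). The only differences are cosmetic---you use a time-only cutoff $\eta_\delta$ and build the upper approximation $w^\sigma$ explicitly via sup-convolution and mollification, whereas the paper uses a space-time ball and simply postulates a $\varphi_\sigma$ with the needed properties, and you spell out the local-to-global promotion in (b)$\Rightarrow$(c) that the paper leaves implicit.
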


\begin{proof}
(a) $\Rightarrow$ (b). 
Take $\varphi \in C^1(\mathbb{R}^N \times (0, T]) \cap C(\mathbb{R}^N \times [0, T])$ such that $u^\varepsilon - \varphi$ has a strict maximum at $(\hat{x}, \hat{t}) \in \mathbb{R}^N \times (0, T]$.
Without loss of generality, we may assume that $(u^\varepsilon - \varphi)(\hat{x}, \hat{t}) = 0$.
Fix $0 < \rho < \hat{t}$.
For any $\sigma > 0$, take a function $\varphi_\sigma \in C^1(\mathbb{R}^N \times (0, T]) \cap C(\mathbb{R}^N \times [0, T])$ satisfying
\begin{itemize}
\item $\varphi_\sigma = \varphi$ in $B((\hat{x}, \hat{t}), \frac{\rho}{2})$,
\item$u^\varepsilon \leq \varphi_\sigma \leq \varphi$ in $B((\hat{x}, \hat{t}), \rho)$,
\item$u^\varepsilon \leq \varphi_\sigma \leq u^\varepsilon + \sigma$ in $\big(\R^N\times[0,T]\big)\setminus B((\hat{x}, \hat{t}), \rho)$.
\end{itemize}
Noting that $\max(u^\varepsilon - \varphi_\sigma) = (u^\varepsilon - \varphi_\sigma)(\hat{x}, \hat{t})$, by (a), we have
\begin{equation} \label{vis_def_equi_a}
\displaystyle J[\varphi_\sigma](\hat{x}, \hat{t}) + K_{(0, \hat{t})}[\varphi_\sigma](\hat{x}, \hat{t}) + H \left( \frac{\hat{x}}{\varepsilon}, D\varphi_\sigma(\hat{x}, \hat{t}) \right) \leq 0.
\end{equation}
It is clear that $\lim_{\sigma \to 0} J[\varphi_\sigma](\hat{x}, \hat{t}) = J[u^\varepsilon](\hat{x}, \hat{t})$ and $D\varphi_\sigma(\hat{x}, \hat{t}) = D\varphi(\hat{x}, \hat{t})$.
Noting that $\varphi(\hat{x}, \hat{t}) - \varphi_\sigma(\hat{x}, \hat{t}) \leq \varphi(\hat{x}, \hat{t} - \tau) - \varphi_\sigma(\hat{x}, \hat{t} - \tau)$ for $0 < \tau < \rho$, we have
\begin{align*}
K_{(0, \hat{t})}[\varphi_\sigma](\hat{x}, \hat{t})
&=
K_{(0, \rho)}[\varphi_\sigma](\hat{x}, \hat{t}) + K_{(\rho, \hat{t})}[\varphi_\sigma](\hat{x}, \hat{t}) \\
&\geq
K_{(0, \rho)}[\varphi](\hat{x}, \hat{t}) + K_{(\rho, \hat{t})}[\varphi_\sigma](\hat{x}, \hat{t}).
\end{align*}
By the dominated convergence theorem, we have 
\begin{equation*}
\lim_{\sigma \to 0} K_{(\rho, \hat{t})}[\varphi_\sigma](\hat{x}, \hat{t})
=
K_{(\rho, \hat{t})}[u^\varepsilon](\hat{x}, \hat{t}).
\end{equation*}
Thus, sending $\sigma \to 0$ in \eqref{vis_def_equi_a}, we obtain
\begin{equation*}
\displaystyle J[u^\varepsilon](\hat{x}, \hat{t}) + K_{(0, \rho)}[\varphi](\hat{x}, \hat{t}) + K_{(\rho, \hat{t})}[u^\varepsilon](\hat{x}, \hat{t}) + H \left( \frac{\hat{x}}{\varepsilon}, D\varphi(\hat{x}, \hat{t}) \right) \leq 0.
\end{equation*}

(b) $\Rightarrow$ (c). 
Assume that there exists $\varphi \in C^1(\mathbb{R}^N \times (0, T]) \cap C(\mathbb{R}^N \times [0, T])$ such that $u^\varepsilon - \varphi$ has a strict maximum at $(\hat{x}, \hat{t}) \in \mathbb{R}^N \times (0, T)$.
By (b), we have
\begin{equation*}
\displaystyle J[u^\varepsilon](\hat{x}, \hat{t}) + K_{(0, \rho)}[\varphi](\hat{x}, \hat{t}) + K_{(\rho, \hat{t})}[u^\varepsilon](\hat{x}, \hat{t}) + H \left( \frac{\hat{x}}{\varepsilon}, D\varphi(\hat{x}, \hat{t}) \right) \leq 0,
\end{equation*}
which implies that
\begin{equation*}
\int_0^\rho \frac{\varphi(\hat{x}, \hat{t}) - \varphi(\hat{x}, \hat{t} - \tau)}{\tau^{\alpha + 1}} d\tau
+
\int_\rho^{\hat{t}} \frac{u^\varepsilon(\hat{x}, \hat{t}) - u^\varepsilon(\hat{x}, \hat{t} - \tau)}{\tau^{\alpha + 1}} d\tau
\leq C
\end{equation*}
for some $C > 0$ which is independent of $\rho > 0$.
Noting that $\varphi \in C^1(\mathbb{R}^N \times (0, T])$, we have
\begin{equation*}
\left| \int_0^\rho \frac{\varphi(\hat{x}, \hat{t}) - \varphi(\hat{x}, \hat{t} - \tau)}{\tau^{\alpha + 1}} d\tau \right|
\leq
\int_0^\rho \frac{| \varphi_t(\hat{x}, \hat{t})|}{\tau^{\alpha}} d\tau
\to 0 \quad \mbox{as} \,\, \rho \to 0.
\end{equation*}

For $r \in \mathbb{R}$, we write $r_+ \coloneqq \max \{r, 0 \}$, $r_- \coloneqq \min \{r, 0 \}$.
Then,
\begin{align}
&\int_\rho^{\hat{t}} \frac{u^\varepsilon(\hat{x}, \hat{t}) - u^\varepsilon(\hat{x}, \hat{t} - \tau)}{\tau^{\alpha + 1}} d\tau 
\nonumber\\
=&\, 
\int_\rho^{\hat{t}} \frac{(u^\varepsilon(\hat{x}, \hat{t}) - u^\varepsilon(\hat{x}, \hat{t} - \tau))_+}{\tau^{\alpha + 1}} - \frac{(u^\varepsilon(\hat{x}, \hat{t}) - u^\varepsilon(\hat{x}, \hat{t} - \tau))_-}{\tau^{\alpha + 1}} d\tau. 
\label{TF_K_exists}
\end{align}

Noting that $u^\varepsilon(\hat{x}, \hat{t}) - \varphi(\hat{x}, \hat{t}) \geq u^\varepsilon(\hat{x}, \hat{t} - \tau) - \varphi(\hat{x}, \hat{t} - \tau)$, we obtain
\begin{align}
C
\geq
- \int_\rho^{\hat{t}} \frac{(u^\varepsilon(\hat{x}, \hat{t}) - u^\varepsilon(\hat{x}, \hat{t} - \tau))_-}{\tau^{\alpha + 1}} d\tau 
&\geq
- \int_\rho^{\hat{t}} \frac{(\varphi(\hat{x}, \hat{t}) - \varphi(\hat{x}, \hat{t} - \tau))_-}{\tau^{\alpha + 1}} d\tau \nonumber
\\
&\geq - C^\prime 
\label{eq:bdd}
\end{align}
for some $C^\prime > 0$ which is independent of $\rho > 0$.
By the monotone convergence theorem, sending $\rho \to 0$ in \eqref{TF_K_exists}, we obtain
\begin{equation*}
\displaystyle J[u^\varepsilon](\hat{x}, \hat{t}) + K_{(0, \hat{t})}[u^\varepsilon](\hat{x}, \hat{t}) + H \left( \frac{\hat{x}}{\varepsilon}, D\varphi(\hat{x}, \hat{t}) \right) \leq 0.
\end{equation*}

(c) $\Rightarrow$ (a). 
Take $\varphi \in C^1(\mathbb{R}^N \times (0, T]) \cap C(\mathbb{R}^N \times [0, T])$ such that $u^\varepsilon - \varphi$ has a strict maximum at $(\hat{x}, \hat{t}) \in \mathbb{R}^N \times (0, T]$.
Noting that $(u^\varepsilon-\varphi)(\hat{x}, \hat{t}-\tau) \leq (u^\varepsilon-\varphi)(\hat{x}, \hat{t})$ for all $\tau \in [0, \hat{t}]$, we have
\begin{equation*}
J[\varphi](\hat{x}, \hat{t}) \leq J[u^\varepsilon](\hat{x}, \hat{t}),
K_{(0, \hat{t})}[\varphi](\hat{x}, \hat{t}) \leq K_{(0, \hat{t})}[u^\varepsilon](\hat{x}, \hat{t}).
\end{equation*}
This inequality together with (c), we get (a).
\end{proof}

We state basic results for time-fractional Hamilton-Jacobi equations, and we refer to \cite{N} for the proofs. 
\begin{Prop}[Comparison principle, {\rm \cite[Theorem 3.1]{GN}} ] \label{Comparison_principle} 
Let $u, v:\mathbb{R}^N \times [0, T] \to \mathbb{R}$ be a viscosity subsolution and a viscosity supersolution of (\ref{TF_HJ_eqs}), respectively.
If $u, v \in \BUC(\mathbb{R}^N \times [0, T])$ and $u(\cdot, 0) \leq v(\cdot, 0)$ on $\mathbb{R}^N$, then $u \leq v$ on $\mathbb{R}^N \times [0, T]$.
\end{Prop}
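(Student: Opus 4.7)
The plan is to argue by contradiction using a doubling variable method tailored to the non-locality of the Caputo derivative. Suppose, aiming at a contradiction, that $M:=\sup_{\R^N\times[0,T]}(u-v)>0$. The central device is the penalty $\eta t^\alpha$, whose Caputo derivative equals the strictly positive constant $\eta\Gamma(\alpha+1)$; this is the quantity that will ultimately drive the contradiction. A spatial penalty $\eta|x|^2$ localizes the argument, exploiting that $u,v\in\BUC(\R^N\times[0,T])$.

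Consider the doubling function
\begin{equation*}
\Phi(x,y,t,s)=u(x,t)-v(y,s)-\frac{|x-y|^2}{2\lambda}-\frac{(t-s)^2}{2\delta}-\eta\bigl(|x|^2+t^\alpha\bigr),
\end{equation*}
with small parameters $\lambda,\delta,\eta>0$, attaining its maximum at some $(\hat x,\hat y,\hat t,\hat s)$. Standard penalization estimates give $|\hat x-\hat y|^2/\lambda\to 0$ and $(\hat t-\hat s)^2/\delta\to 0$ in the joint limit, and for a suitable schedule of parameters $\hat t,\hat s>0$. Applying Proposition \ref{vissol_equivalent}(c) to the subsolution inequality for $u$ at $(\hat x,\hat t)$ and the supersolution inequality for $v$ at $(\hat y,\hat s)$, and subtracting, gives
\begin{align*}
&J[u](\hat x,\hat t)-J[v](\hat y,\hat s)+K_{(0,\hat t)}[u](\hat x,\hat t)-K_{(0,\hat s)}[v](\hat y,\hat s)\\
&\qquad+H\Bigl(\tfrac{\hat x}{\varepsilon},\tfrac{\hat x-\hat y}{\lambda}+2\eta\hat x\Bigr)-H\Bigl(\tfrac{\hat y}{\varepsilon},\tfrac{\hat x-\hat y}{\lambda}\Bigr)\le 0.
\end{align*}
The Hamiltonian difference is handled by first using coercivity (A1) to bound $|\hat x-\hat y|/\lambda$, and then by the continuity of $H$ in both arguments together with $|\hat x-\hat y|\to 0$ and $\eta\to 0$ (noting that $\varepsilon$ is fixed throughout).

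The crux of the argument, and the main obstacle, is to bound the fractional derivative difference from below. The usual first-order cancellation obtained from $(t-s)^2/(2\delta)$ fails here because $\partial_t^\alpha\varphi\neq-\partial_s^\alpha\varphi$ for the Caputo derivative. I would bypass this by exploiting the maximality of $\Phi$ along the diagonal shift: for every $\tau\in(0,\min\{\hat t,\hat s\}]$, the inequality $\Phi(\hat x,\hat y,\hat t-\tau,\hat s-\tau)\le\Phi(\hat x,\hat y,\hat t,\hat s)$ leaves the $(t-s)^2$ term invariant and yields
\begin{equation*}
u(\hat x,\hat t)-u(\hat x,\hat t-\tau)\ge v(\hat y,\hat s)-v(\hat y,\hat s-\tau)+\eta\bigl(\hat t^\alpha-(\hat t-\tau)^\alpha\bigr).
\end{equation*}
Dividing by $\tau^{\alpha+1}$ and integrating on $(0,\min\{\hat t,\hat s\})$ bounds the principal part of $K_{(0,\hat t)}[u]-K_{(0,\hat s)}[v]$ from below by a positive quantity of order $\eta\Gamma(\alpha+1)$, up to negligible remainders. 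The boundary slice of the $K$-integrals and the $J$-terms are controlled through $u(\cdot,0)\le v(\cdot,0)$, the uniform continuity of $u,v$, and $|\hat t-\hat s|\to 0$. Sending $\lambda,\delta\to 0$ and then $\eta\to 0$ produces an inequality of the form $\eta\Gamma(\alpha+1)\le o(1)$, contradicting $M>0$.
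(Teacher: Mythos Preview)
The paper does not supply its own proof of this proposition: it is quoted verbatim from \cite[Theorem~3.1]{GN}, and the surrounding text explicitly says ``we refer to \cite{N} for the proofs.'' Hence there is no in-paper argument to compare against.

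That said, your sketch is in the spirit of the Giga--Namba proof and, in particular, identifies the correct substitute for the classical cancellation $\varphi_t=-\varphi_s$: the \emph{diagonal time shift} $\tau\mapsto(\hat t-\tau,\hat s-\tau)$, which leaves the penalty $(t-s)^2/(2\delta)$ invariant and produces the pointwise inequality you wrote for $u(\hat x,\hat t)-u(\hat x,\hat t-\tau)$ versus $v(\hat y,\hat s)-v(\hat y,\hat s-\tau)$. This is exactly the mechanism behind \cite[Lemma~3.4]{GN}, which the present paper also invokes (see the discussion around Lemma~\ref{TF_Conv_Lem}).

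Where your outline is thin is in the ``negligible remainders''. Two items deserve more than a sentence: (i) the mismatch between the integration ranges $(0,\hat t)$ and $(0,\hat s)$---the slice between $\min\{\hat t,\hat s\}$ and $\max\{\hat t,\hat s\}$ must be shown to be $o(1)$ as $\delta\to0$, which uses the uniform continuity of $u,v$ and the estimate $|\hat t-\hat s|^2/\delta\to0$; and (ii) the $J$-terms, where $u(\cdot,0)\le v(\cdot,0)$ and $|\hat x-\hat y|\to0$ must be combined with $\hat t,\hat s\to$ a common positive limit to get the correct sign. Also, your appeal to coercivity (A1) to bound $|\hat x-\hat y|/\lambda$ requires first knowing that $J[u]+K_{(0,\hat t)}[u]$ is bounded below at the maximum point, which is not immediate from boundedness of $u$ alone; in \cite{GN} this is handled via the equivalent formulation in Proposition~\ref{vissol_equivalent}(b), keeping a small $\rho>0$ to control $K_{(0,\rho)}$ by the test function. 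None of this is fatal, but a full write-up would need to track these pieces carefully.
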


\begin{Prop}[Existence of a solution, {\rm\cite[Theorem 4.2]{GN}}] \label{Existence} 
Let $u_-, u_+:\mathbb{R}^N \times [0, T] \to \mathbb{R}$ be a viscosity subsolution and a viscosity supersolution of (\ref{TF_HJ_eqs}), respectively.
Suppose that $u_- \leq u_+$ in $\mathbb{R}^N \times [0, T] \to \mathbb{R}$.
There exists a viscosity solution $u$ of (\ref{TF_HJ_eqs}) that satisfies $u_- \leq u \leq u_+$ in $\mathbb{R}^N \times [0, T] \to \mathbb{R}$.
\end{Prop}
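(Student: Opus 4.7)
The plan is to adapt Perron's method to the non-local Caputo framework, exploiting the equivalent formulations provided by Proposition \ref{vissol_equivalent}. Define the envelope
\[
u(x,t) := \sup\{w(x,t) : w \text{ is a viscosity subsolution of \eqref{TF_HJ_eqs} with } u_- \le w \le u_+\},
\]
which is non-empty since $u_-$ belongs to the admissible family, and by construction $u_- \le u \le u_+$. The strategy is then to show that the upper semicontinuous envelope $u^*$ is a viscosity subsolution and the lower semicontinuous envelope $u_*$ is a viscosity supersolution. Combining this with $u^*(\cdot, 0) \le u_0 \le u_*(\cdot,0)$ (inherited from the sandwich with $u_-$ and $u_+$) and the boundedness/uniform-continuity ensured by the same sandwich, Proposition \ref{Comparison_principle} forces $u^* \le u_*$, whence $u = u^* = u_* \in C(\mathbb{R}^N \times [0,T])$ is the desired viscosity solution.

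For the subsolution property of $u^*$, I would use the equivalent characterization (c) of Proposition \ref{vissol_equivalent}. Fix a smooth $\varphi$ with $u^* - \varphi$ attaining a strict local maximum at $(\hat x, \hat t)$. By a standard argument one extracts subsolutions $w_n$ from the admissible family and local maximum points $(x_n, t_n) \to (\hat x, \hat t)$ with $w_n(x_n, t_n) \to u^*(\hat x, \hat t)$. Each $w_n$ satisfies the subsolution inequality at $(x_n, t_n)$; passing to the limit relies on elementary continuity of $J[\varphi]$, on Proposition \ref{K_conti} to control the $K$-term through a suitable modification of $\varphi$ that dominates the $w_n$'s in a neighborhood, and on continuity of $H$ and of $D\varphi$.

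The main effort lies in showing that $u_*$ is a supersolution. I would argue by contradiction: assume there exist $(\hat x, \hat t)$ and $\varphi$ with $u_* - \varphi$ attaining a strict local minimum at $(\hat x, \hat t)$ and
\[
J[\varphi](\hat x, \hat t) + K_{(0, \hat t)}[\varphi](\hat x, \hat t) + H\bigl(\hat x/\varepsilon, D\varphi(\hat x, \hat t)\bigr) \le -\delta < 0.
\]
Since $u_+$ is itself a supersolution, necessarily $u_*(\hat x, \hat t) < u_+(\hat x, \hat t)$, leaving room for small $c, r > 0$ such that the localized bump $\tilde u := \max\{u, \varphi + c\}$ inside $B((\hat x,\hat t), r)$ and $\tilde u := u$ outside remains bounded above by $u_+$ and strictly exceeds $u$ somewhere. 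The delicate point, specific to the fractional setting, is verifying that $\tilde u$ is still a subsolution: raising the function on a small set affects $K_{(0,t)}$ at \emph{all} later times, not only in a neighborhood. The key observation is that $\tilde u \ge u$ pointwise, so at any later test point $(x,t)$ outside the bump the inequality $\tilde u(x,t) = u(x,t)$ together with $\tilde u(x, t-\tau) \ge u(x, t-\tau)$ gives $K_{(0,t)}[\tilde u](x,t) \le K_{(0,t)}[u](x,t)$, preserving the subsolution inequality; at points inside the bump, the strict $-\delta$ margin absorbs errors from perturbations and from the modified non-local tail. The existence of such a $\tilde u$ contradicts the maximality of $u$ in the admissible family, completing the argument.
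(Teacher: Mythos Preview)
Your overall Perron strategy matches the paper's: define $u$ as the supremum over admissible subsolutions, show it is a subsolution (the paper invokes Lemma~\ref{Closedness_supinf}), and obtain the supersolution property by a bump construction (the paper's Lemma~\ref{Perrons_method}). Your observation for test points \emph{outside} the bump---that $\tilde u\ge u$ globally and $\tilde u(x,t)=u(x,t)$ there force $J[\tilde u]\le J[u]$ and $K_{(0,t)}[\tilde u]\le K_{(0,t)}[u]$---is correct.

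The genuine gap is the treatment \emph{inside} the bump. The assertion that ``the strict $-\delta$ margin absorbs errors from the modified non-local tail'' is not justified and, as written, fails. At a test point $(x,t)$ in the bump with $\tilde u(x,t)=\varphi(x,t)+c$, for those $\tau$ with $(x,t-\tau)$ outside the bump one only has $\tilde u(x,t-\tau)=u(x,t-\tau)\ge\varphi(x,t-\tau)$, so the best one can say is $\psi(x,t)-\psi(x,t-\tau)\le\varphi(x,t)-\varphi(x,t-\tau)+c$. This produces a tail error of order $c\int_{\tau_*}^{t}\tau^{-\alpha-1}\,d\tau\sim c\,\tau_*^{-\alpha}$, where $\tau_*$ is the first exit time of the backward trajectory from the time component of the bump. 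For $(x,t)$ near the lower temporal boundary of the bump, $\tau_*\to 0$ and this error blows up; a fixed margin $\delta$ cannot absorb it. The paper's Lemma~\ref{Perrons_method} avoids this altogether: the bump height $\lambda'$ is chosen so small that $u\ge\varphi+2\lambda'$ on the \emph{full time cylinder} $\bigl(B(\hat x,2r)\times[0,T]\bigr)\setminus B((\hat x,\hat t),r)$. Since any point $(\hat y,\hat s)$ in the bump has $\hat y\in B(\hat x,2r)$, this guarantees $w(\hat y,\hat s-\tau)\ge\varphi(\hat y,\hat s-\tau)+\lambda'$ for \emph{every} $\tau\in[0,\hat s]$, yielding the exact inequalities $J[\psi]\le J[\varphi]$ and $K_{(0,\hat s)}[\psi]\le K_{(0,\hat s)}[\varphi]$ with no error term; the subsolution inequality for $\varphi$ then transfers directly to $\psi$. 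That cylinder-based choice of the bump height---controlling the function on the whole past time-line, not just in a space-time neighborhood---is the missing idea in your sketch.
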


Proposition \ref{Existence} follows from Lemmas \ref{Closedness_supinf} and \ref{Perrons_method}.
We denote $S^-$ and $S^+$ by the set of all subsolutions and supersolutions of (\ref{TF_HJ_eqs}), respectively.

\begin{Lem}[Closedness under supremum/infimum operator, {\rm\cite[Lemma 4.1]{GN}}] \label{Closedness_supinf}
Let $X$ be a nonempty subset of $S^-$ (resp., $S^+$).
Set 
\begin{equation*}
\displaystyle u(x, t) \coloneqq \sup_{v \in X} v(x, t) \quad (resp., \inf_{v \in X} v(x, t)) \quad \mbox{for} \, \, (x, t) \in \mathbb{R}^N \times [0, T].
\end{equation*}
If $u < \infty$ (resp., $u > -\infty$) on $\mathbb{R}^N \times [0, T]$, then $u$ is a viscosity subsolution (resp., supersolution) of (\ref{TF_HJ_eqs}).
\end{Lem}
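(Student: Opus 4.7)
The plan is to adapt the classical argument that a supremum of viscosity subsolutions is again a subsolution, with the non-local Caputo term handled by the equivalent characterization in Proposition \ref{vissol_equivalent}(b). Since the pointwise supremum of USC functions is in general not USC, I will work with the upper semicontinuous envelope $u^\ast(x,t):=\limsup_{(y,s)\to(x,t)}u(y,s)$ and show that $u^\ast$ is a viscosity subsolution in the sense of Definition \ref{Def_vis_sol}; this is the standard formulation in Perron-type constructions. The initial inequality $u^\ast(\cdot,0)\le u_0$ follows from $v(\cdot,0)\le u_0$ for every $v\in X$ together with the continuity of $u_0$.

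Fix $\varphi\in C^1(\R^N\times(0,T])\cap C(\R^N\times[0,T])$ so that $u^\ast-\varphi$ attains a strict global maximum, normalized to $0$, at $(\hat x,\hat t)\in\R^N\times(0,T]$. By definition of $u^\ast$, choose $v_n\in X$ and $(y_n,s_n)\to(\hat x,\hat t)$ with $v_n(y_n,s_n)\to u^\ast(\hat x,\hat t)$, and on a small closed ball $\overline B$ around $(\hat x,\hat t)$ contained in $\R^N\times(0,T]$, let $(\bar x_n,\bar t_n)\in\overline B$ maximize $v_n-\varphi$. The strict-maximum property of $u^\ast-\varphi$, the USC of each $v_n$, and $v_n\le u\le u^\ast$ together force $(\bar x_n,\bar t_n)\to(\hat x,\hat t)$ and $v_n(\bar x_n,\bar t_n)\to u^\ast(\hat x,\hat t)$. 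After modifying $\varphi$ outside $\overline B$ by a smooth cutoff so that the max of $v_n-\varphi$ becomes global (using boundedness of the $v_n$ on compacts), Proposition \ref{vissol_equivalent}(b) applied to $v_n$ yields, for every $0<\rho<\hat t$ and all large $n$,
\[
J[v_n](\bar x_n,\bar t_n)+K_{(0,\rho)}[\varphi](\bar x_n,\bar t_n)+K_{(\rho,\bar t_n)}[v_n](\bar x_n,\bar t_n)+H\!\left(\tfrac{\bar x_n}{\varepsilon},D\varphi(\bar x_n,\bar t_n)\right)\le 0.
\]

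I then pass to the limit $n\to\infty$ term by term. The Hamiltonian term converges by continuity of $H$ and $D\varphi$; the term $K_{(0,\rho)}[\varphi]$ converges by dominated convergence since its integrand is dominated by $\|\varphi_t\|_{L^\infty}\tau^{-\alpha}$ on a compact time interval. For $J[v_n]$ the upper semicontinuity of $u^\ast$ gives $\limsup_n v_n(\bar x_n,0)\le u^\ast(\hat x,0)$, hence
\[
\liminf_{n\to\infty}J[v_n](\bar x_n,\bar t_n)\ge J[u^\ast](\hat x,\hat t).
\]
The genuinely delicate term is $K_{(\rho,\bar t_n)}[v_n]$. Here the whole point of splitting at $\rho>0$ is that on $[\rho,T]$ the kernel $\tau^{-\alpha-1}$ is bounded, and the $v_n$ are uniformly bounded on compacts, so Fatou's lemma combined with the USC bound $\limsup_n v_n(\bar x_n,\bar t_n-\tau)\le u^\ast(\hat x,\hat t-\tau)$ delivers
\[
\liminf_{n\to\infty}K_{(\rho,\bar t_n)}[v_n](\bar x_n,\bar t_n)\ge K_{(\rho,\hat t)}[u^\ast](\hat x,\hat t).
\]
Summing the four $\liminf$ inequalities produces exactly condition (b) of Proposition \ref{vissol_equivalent} for $u^\ast$ at $(\hat x,\hat t)$, valid for every $0<\rho<\hat t$, and the equivalence in Proposition \ref{vissol_equivalent} identifies $u^\ast$ as a viscosity subsolution.

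The main obstacle is precisely this passage to the limit in the nonlocal Caputo kernel, which is the same sort of incompatibility with pointwise viscosity techniques that the introduction flags as the chief difficulty of the paper. It is resolved by the splitting at $\rho$ from Proposition \ref{vissol_equivalent}(b): the singular part of the kernel is absorbed into the smooth $\varphi$ (continuous dependence), while on $[\rho,T]$ only Fatou with standard USC bounds on the $v_n$ is needed. The supersolution/infimum case is analogous, replacing $u^\ast$ by the lower semicontinuous envelope $u_\ast$, reversing all inequalities, and interchanging the roles of $\liminf$ and $\limsup$.
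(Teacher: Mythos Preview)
The paper does not supply its own proof of this lemma; it is stated with a citation to \cite[Lemma 4.1]{GN} and used as a black box. So there is no ``paper's proof'' to compare against; your proposal is effectively a reconstruction of the argument in \cite{GN}.

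Your approach is the natural one and is essentially correct: pick approximate maximizers $v_n\in X$, apply the subsolution inequality to each $v_n$ in the form of Proposition~\ref{vissol_equivalent}(b), and pass to the limit. The $\rho$-splitting is exactly the right device here, since it pushes the singular part of the Caputo kernel onto the smooth test function $\varphi$ and leaves only a Fatou argument on $[\rho,\bar t_n]$, where the kernel is bounded and the USC bound $v_n\le u^\ast$ gives the needed one-sided control. This is in the same spirit as the proof of Proposition~\ref{vissol_equivalent} itself.

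Two technical points deserve a line of care. First, to upgrade the local maximum of $v_n-\varphi$ on $\overline B$ to a global one you need a uniform upper bound on the $v_n$ over all of $\R^N\times[0,T]$, not just on compacts; the hypothesis $u<\infty$ is only pointwise. In the applications in this paper (e.g.\ Proposition~\ref{TF_uni_time_regularity}) the family sits below a fixed barrier $u^+$, so this is harmless, but in full generality one must either assume local boundedness of $u^\ast$ or build the penalization more carefully. Second, the initial inequality $u^\ast(\cdot,0)\le u_0$ does not follow merely from $v(\cdot,0)\le u_0$ and continuity of $u_0$: the envelope $u^\ast(x,0)$ involves $\limsup$ along sequences with $s_n\downarrow 0$, so one needs some uniform-in-$v$ control of $v(\cdot,s)-v(\cdot,0)$ near $s=0$ (again automatic in the paper's applications via the barrier $u_0\pm Mt^\alpha$). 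With these caveats addressed, your argument goes through.
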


\begin{Lem} \label{Perrons_method} 
Let $u_+:\mathbb{R}^N \times [0, T] \to \mathbb{R}$ be a supersolution of (\ref{TF_HJ_eqs}).
Define
\begin{equation*}
X \coloneqq \{ v \in S^- \mid v \leq u_+ \, \mbox{in} \, \mathbb{R}^N \times [0, T] \}.
\end{equation*}
If $u \in X$ is not a supersolution of (\ref{TF_HJ_eqs}), then there exists a function $w \in X$ and a point $(y, s) \in \mathbb{R}^N \times (0, T]$ such that $u(y, s) < w(y, s)$.
\end{Lem}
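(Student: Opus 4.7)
The plan is to adapt the classical Perron bump construction to the time-fractional setting, with the non-locality of the Caputo derivative handled via a global monotonicity argument. Since $u \in X$ is not a supersolution of \eqref{TF_HJ_eqs}, negating Definition \ref{Def_vis_sol} yields a test function $\varphi \in C^1(\mathbb{R}^N \times (0,T]) \cap C(\mathbb{R}^N \times [0,T])$ and a point $(\hat{x},\hat{t}) \in \mathbb{R}^N \times (0,T]$ at which $u_* - \varphi$ attains a local minimum with $u_*(\hat{x},\hat{t}) = \varphi(\hat{x},\hat{t})$, while
\begin{equation*}
J[\varphi](\hat{x},\hat{t}) + K_{(0,\hat{t})}[\varphi](\hat{x},\hat{t}) + H\!\left(\tfrac{\hat{x}}{\varepsilon}, D\varphi(\hat{x},\hat{t})\right) < 0.
\end{equation*}
Subtracting a small quadratic penalty in $|x-\hat{x}|^2 + (t-\hat{t})^2$, I may assume the minimum is strict and quantitative on a closed ball $\overline{B} \subset \mathbb{R}^N \times (0,T]$ around $(\hat{x},\hat{t})$. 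Since $u \le u_+$ and $u_+$ is a supersolution, testing at $(\hat{x},\hat{t})$ forces the strict gap $u_+(\hat{x},\hat{t}) > \varphi(\hat{x},\hat{t})$; otherwise $\varphi$ would serve as a valid supersolution test for $u_+$, contradicting the supersolution inequality.

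Next I form $\varphi_\delta \coloneqq \varphi + \delta\psi$, where $\psi \ge 0$ is smooth, compactly supported in $B$, satisfies $\psi(\hat{x},\hat{t}) > 0$, and vanishes on $\mathbb{R}^N \times [0,\tau_0]$ for some $\tau_0 > 0$. For $\delta > 0$ small, continuity of $J$, $K_{(0,\cdot)}$, $D\varphi_\delta$, and $H$ gives simultaneously, in a suitable neighborhood: (i) $\varphi_\delta$ is a classical subsolution of \eqref{TF_HJ_eqs} on a ball $B' \subset B$; (ii) $\varphi_\delta \le u_+$ on $B$, by the gap and continuity; (iii) $\varphi_\delta < u$ off a smaller ball $B''$ with $\overline{B''} \subset B'$, using the strict minimum of $u_* - \varphi$ and $u \ge u_*$. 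I then set
\begin{equation*}
w(x,t) \coloneqq
\begin{cases}
\max\{u(x,t), \varphi_\delta(x,t)\} & \text{in } B'',\\
u(x,t) & \text{otherwise}.
\end{cases}
\end{equation*}
By (ii) and (iii), $w$ is upper semicontinuous with $u \le w \le u_+$ and $w(\cdot,0) = u(\cdot,0) \le u_0$; moreover, choosing $(y,s)$ near $(\hat{x},\hat{t})$ with $u(y,s)$ approximating $u_*(\hat{x},\hat{t})$ (possible by definition of the lower semicontinuous envelope) gives $w(y,s) \ge \varphi_\delta(y,s) > u(y,s)$, supplying the required point.

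It then remains to verify that $w \in S^-$. I would use Proposition \ref{vissol_equivalent}(c): at a local maximum $(\bar{x},\bar{t})$ of $w - \zeta$ with $\zeta \in C^1$, either $w(\bar{x},\bar{t}) = u(\bar{x},\bar{t})$ or $w(\bar{x},\bar{t}) = \varphi_\delta(\bar{x},\bar{t})$. In the first case, since $w \ge u$ globally with equality at the contact point and $w(\cdot,0) = u(\cdot,0)$, I obtain $J[w](\bar{x},\bar{t}) \le J[u](\bar{x},\bar{t})$ and $K_{(0,\bar{t})}[w](\bar{x},\bar{t}) \le K_{(0,\bar{t})}[u](\bar{x},\bar{t})$; combined with $\zeta$ touching $u$ from above and the subsolution inequality for $u$, this gives the desired bound for $w$. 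The second case is analogous, with $\varphi_\delta$ replacing $u$ and the strict inequality (i) absorbing the $H$-term.

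The main obstacle, and the conceptual point where this differs from the local case, is that $K_{(0,\bar{t})}[w](\bar{x},\bar{t})$ integrates the full past $w(\bar{x},\cdot)$ on $(0,\bar{t})$, so any modification of $u$ on $(0,\bar{t}) \cap B''$ affects this non-local term and prevents a purely local comparison. The resolution is the global domination $w \ge u$ and $w \ge \varphi_\delta$: monotonicity forces each integrand $(w(\bar{x},\bar{t}) - w(\bar{x},\bar{t}-\tau))/\tau^{\alpha+1}$ to be pointwise no larger than the corresponding one for $u$ or $\varphi_\delta$ at the contact point, and this is exactly what preserves the subsolution inequality under the max operation.
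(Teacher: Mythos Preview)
Your bump construction follows the same template as the paper's proof, and you correctly identify the crux: the non-locality of $J$ and $K_{(0,\cdot)}$ forces the comparison $w\ge\varphi_\delta$ to hold along the \emph{entire} time segment $\{\bar x\}\times[0,\bar t]$, not just near the contact point. However, there is a genuine gap in how you set this up. Negating Definition~\ref{Def_vis_sol} yields a \emph{global} minimum of $u_*-\varphi$ over $\mathbb{R}^N\times[0,T]$, not a local one, because the definition itself is stated with global extrema. Your phrase ``attains a local minimum'' followed by ``strict and quantitative on a closed ball $\overline B$'' does not give you $u\ge\varphi$ outside $\overline B$; yet for $\tau$ close to $\bar t$ the point $(\bar x,\bar t-\tau)$ typically falls outside $\overline B$, and there you need $w(\bar x,\bar t-\tau)=u(\bar x,\bar t-\tau)\ge\varphi(\bar x,\bar t-\tau)=\varphi_\delta(\bar x,\bar t-\tau)$ to push the monotonicity through. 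With only a local minimum this inequality is unjustified, so your asserted ``global domination $w\ge\varphi_\delta$'' is not supported by your own hypotheses. The fix is simply to take the minimum global from the start (the quadratic penalty then only sharpens it), after which your argument is correct.

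Modulo this, your proof and the paper's differ only cosmetically. You perturb by a compactly supported bump $\delta\psi$ and check the subsolution property via Proposition~\ref{vissol_equivalent}(c); the paper uses a constant lift $\varphi+\lambda'$ and works directly with Definition~\ref{Def_vis_sol}. The paper secures the time-history comparison by choosing $\lambda'$ so that $\varphi+2\lambda'\le u$ on the full cylinder $\bigl(B(\hat x,2r)\times[0,T]\bigr)\setminus B((\hat x,\hat t),r)$, which is its analogue of your global domination. Your use of $u_*$ rather than $u$ is in fact more careful than the paper's, and your extraction of the point $(y,s)$ via the lower envelope is a nice touch that the paper glosses over.
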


\begin{proof}
Since $u$ is not a supersolution of (\ref{TF_HJ_eqs}), there exists $((\hat{x}, \hat{t}), \varphi) \in (\mathbb{R}^N \times (0, T]) \times (C^1(\mathbb{R}^N \times (0, T]) \cap C(\mathbb{R}^N \times [0, T]))$ such that
\begin{equation*}
(u - \varphi)(x, t) > (u - \varphi)(\hat{x}, \hat{t}) = 0 \quad \mbox{for all} \,\, (x, t) \neq (\hat{x}, \hat{t}),
\end{equation*}
and
\begin{equation*}
J[\varphi](\hat{x}, \hat{t}) + K_{(0, \hat{t})}[\varphi](\hat{x}, \hat{t}) + H \left( \frac{\hat{x}}{\varepsilon}, D\varphi(\hat{x}, \hat{t}) \right) < 0.
\end{equation*}
For $r > 0$ small enough, we have
\begin{equation} \label{Perron_contradiction}
J[\varphi](x, t) + K_{(0, t)}[\varphi](x, t) + H \left( \frac{x}{\varepsilon}, D\varphi(x, t) \right) \leq 0 \quad \mbox{for} \,\, B((\hat{x}, \hat{t}), 2r) \cap (\mathbb{R}^N \times (0, T]),
\end{equation}
where we denote by $B((\hat{x}, \hat{t}), 2r)$ a cylindrical neighborhood of $(\hat{x}, \hat{t})$.

It is clear to see that $\varphi \leq u \leq u_+$ in $\mathbb{R}^N \times [0, T]$. 
We set $\lambda \coloneqq \frac{1}{2}(u_+ - \varphi)(\hat{x}, \hat{t}) > 0$. 
Since $u_+ - \varphi$ is lower semicontinuous, if $r > 0$ is small enough, we have $\varphi + \lambda \leq u_+$ in $B((\hat{x}, \hat{t}), 2r)$.
Since $u > \varphi$ in $\mathbb{R}^N \times [0, T] \setminus \{(\hat{x}, \hat{t})\}$, there exists $\lambda^\prime \in (0, \lambda)$ such that $\varphi + 2\lambda^\prime \leq u$ in $\big(B(\hat{x}, 2r) \times [0, T]\big) \setminus B((\hat{x}, \hat{t}), r)$.
Define the function $w:\mathbb{R}^N \times [0, T] \mapsto \mathbb{R}$ by
\begin{equation*}
w \coloneqq 
\left\{ 
\begin{array}{ll}
\max \{u, \varphi + \lambda^\prime \} & \mbox{in} \, \, B((\hat{x}, \hat{t}), r), \\
u & \mbox{in} \, \, \big(\mathbb{R}^N \times [0, T]\big)\setminus B((\hat{x}, \hat{t}), r).
\end{array} \right.
\end{equation*}
We claim that $w \in X$.
It is easy to see that $w \leq u_+$ in $\mathbb{R}^N \times [0, T]$.
Take $\psi \in C^1(\mathbb{R}^N \times (0, T]) \cap C(\mathbb{R}^N \times [0, T])$ such that $w - \psi$ has a strict maximum at $(\hat{y}, \hat{s}) \in \mathbb{R}^N \times (0, T]$ with $(w-\psi)(\hat{y}, \hat{s})=0$.
If $w = u$ at $(\hat{y}, \hat{s})$, since $u$ is a subsolution, we are done.
We consider the case where $w = \varphi + \lambda$ at $(\hat{y}, \hat{s})$.
By definition of $w$, we have $(\hat{y}, \hat{s}) \in B((\hat{x}, \hat{t}), r)$ and $w(\hat{y}, \hat{s} - \tau) \geq \varphi(\hat{y}, \hat{s} - \tau) + \lambda^\prime$ for $\tau \in [0, \hat{s}]$.
Noting that $0 = (w - \psi)(\hat{y}, \hat{s}) \geq (w - \psi)(\hat{y}, \hat{s} - \tau)$, we have
\begin{align*}
&\psi(\hat{y}, \hat{s}) - \psi(\hat{y}, \hat{s} - \tau) \leq w(\hat{y}, \hat{s}) - w(\hat{y}, \hat{s} - \tau) \\
&\leq \varphi(\hat{y}, \hat{s}) + \lambda^\prime - (\varphi(\hat{y}, \hat{s} - \tau) + \lambda^\prime) 
= \varphi(\hat{y}, \hat{s}) - \varphi(\hat{y}, \hat{s} - \tau)
\end{align*}
for $\tau \in [0, \hat{s}]$ 
which yields $J[\psi](\hat{y}, \hat{s}) \leq J[\varphi](\hat{y}, \hat{s})$ and $K_{(0, \hat{s})}[\psi](\hat{y}, \hat{s}) \leq K_{(0, \hat{s})}[\varphi](\hat{y}, \hat{s})$.
Also, since $\varphi = \psi$ at $(\hat{y}, \hat{s})$ and $0 > w - \psi \geq \varphi + \lambda^\prime - \psi$ in $\big(\mathbb{R}^N \times [0, T]\big) \setminus \{ (\hat{y}, \hat{s}) \}$, we have $D\varphi = D\psi$ at $(\hat{y}, \hat{s})$.
Combining this with \eqref{Perron_contradiction}, we obtain
\begin{align*}
&J[\psi](\hat{y}, \hat{s}) + K_{(0, \hat{s})}[\psi](\hat{y}, \hat{s}) + H \left( \frac{\hat{y}}{\varepsilon}, D\psi(\hat{y}, \hat{s}) \right) \\
&\leq J[\varphi](\hat{y}, \hat{s}) + K_{(0, \hat{s})}[\varphi](\hat{y}, \hat{s}) + H \left( \frac{\hat{y}}{\varepsilon}, D\varphi(\hat{y}, \hat{s}) \right) \le0, 
\end{align*}
which implies that $w \in S^-$ and finishes the proof. 
\end{proof}

\section{Regularity estimates}

\begin{Prop} \label{TF_uni_bdd}
Assume (A1)--(A3) hold.
Let $u^\varepsilon$ be the viscosity solution of (\ref{TF_HJ_eqs}).
There exists a constant $M^\prime > 0$ depending only on $u_0, H$ and $T$ such that
\begin{equation}
|u^\varepsilon(x, t)| \leq M^\prime \quad \mbox{for} \quad (x, t) \in \mathbb{R}^N \times [0, T].
\end{equation}
\end{Prop}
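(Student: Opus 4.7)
My plan is to prove the bound by comparison against explicit, spatially constant barrier functions of the form $w_\pm(t) = \pm\bigl(M_0 + C t^\alpha\bigr)$, where $M_0 \coloneqq \|u_0\|_{L^\infty(\R^N)}$ and $C > 0$ is to be chosen. The profile $t^\alpha$ is singled out because its Caputo derivative is constant, which is exactly what is needed to absorb the zero-momentum value of $H$.

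First I would observe that $M_0 < \infty$ by (A3), and that $M_1 \coloneqq \max_{y \in \T^N} |H(y, 0)|$ is finite thanks to (A2) together with continuity of $H$. A direct Beta-function computation, legitimate because $t^\alpha$ satisfies the hypotheses of Proposition \ref{Weak_Caputo} (note $\alpha t^{\alpha-1} \in L^1(0,T)$ since $\alpha \in (0,1)$), yields $\partial_t^\alpha (t^\alpha) = \Gamma(1+\alpha)$. Setting $C \coloneqq M_1/\Gamma(1+\alpha)$, the function $w_+(x,t) \coloneqq M_0 + C t^\alpha$ lies in $\BUC(\R^N \times [0, T])$, is smooth on $\R^N \times (0, T]$, and satisfies
\[
\partial_t^\alpha w_+ + H\!\left(\frac{x}{\varepsilon},\, D w_+\right) = C\,\Gamma(1+\alpha) + H\!\left(\frac{x}{\varepsilon},\, 0\right) \geq 0,
\]
with $w_+(\cdot, 0) = M_0 \geq u_0$ on $\R^N$; hence $w_+$ is a classical, and in particular viscosity, supersolution of \eqref{TF_HJ_eqs}. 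By symmetry, $w_-(x, t) \coloneqq -M_0 - C t^\alpha$ is a viscosity subsolution with $w_-(\cdot, 0) \leq u_0$.

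Applying Proposition \ref{Comparison_principle} to the pairs $(u^\varepsilon, w_+)$ and $(w_-, u^\varepsilon)$ (all three functions lie in $\BUC(\R^N \times [0, T])$), I obtain $w_- \leq u^\varepsilon \leq w_+$ on $\R^N \times [0, T]$, which gives the claim with $M' \coloneqq M_0 + M_1 T^\alpha/\Gamma(1+\alpha)$, a constant depending only on $u_0$, $H$, and $T$. I foresee no significant obstacle: the sole mild subtlety is that $t^\alpha$ fails to be $C^1$ at $t=0$, but this is harmless because the test-function class in Definition \ref{Def_vis_sol} only requires $C^1$ regularity on $(0, T]$. Should the a priori $\BUC$ bound on $u^\varepsilon$ not yet be available at this stage, one can instead invoke Proposition \ref{Existence} with the barriers $w_\pm$ to construct a solution squeezed between them and identify it with $u^\varepsilon$ via uniqueness.
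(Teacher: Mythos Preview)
Your proof is correct and follows essentially the same approach as the paper: build explicit barriers using the $t^\alpha$ profile (whose Caputo derivative is the constant $\Gamma(1+\alpha)$) and apply the comparison principle. The only cosmetic difference is that the paper takes $u^\pm(x,t)=u_0(x)\pm M t^\alpha$ with $M$ controlled by $\max_{|p|\le \Lip[u_0]}|H(\cdot,p)|$, whereas you use spatially constant barriers $\pm(\|u_0\|_\infty + Ct^\alpha)$ with $C$ depending only on $\max|H(\cdot,0)|$; both choices work equally well for this proposition.
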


\begin{proof}
We set $u^+(x, t) \coloneqq u_0(x) + M t^\alpha$ and $u^-(x, t) \coloneqq u_0(x) - M t^\alpha$ with
\begin{equation}\label{const-M}
M \coloneqq \frac{1}{\Gamma(1 - \alpha)} \max \{ H(\xi, p) \mid \xi \in \mathbb{R}^N, |p| \leq \Lip[u_0] \}.
\end{equation}
Then, $u^+$ and $u^-$ are a viscosity supersolution and a viscosity subsolution of (\ref{TF_HJ_eqs}), respectively.
By Proposition \ref{Comparison_principle}, we obtain
\begin{equation*}
u_0(x) - M t^\alpha \leq u^\varepsilon(x, t) \leq u_0(x) + M t^\alpha
\end{equation*}
on $\mathbb{R}^N \times [0, T]$.
Thus, we choose $M^{\prime} \coloneqq \|u_0\|_{\infty} + M T^\alpha$.
This completes the proof.
\end{proof}

\begin{Prop} \label{TF_uni_time_regularity}
Assume (A1)--(A3) hold.
Let $u^\varepsilon$ be the viscosity solution of (\ref{TF_HJ_eqs}).
There exists a constant $M > 0$ depending only on $u_0$ and $H$ such that
\begin{equation} \label{TF_alpha_holder}
|u^\varepsilon(x, t) - u^\varepsilon(x, s)| \leq M |t - s|^\alpha
\end{equation}
for $(x, t, s) \in \mathbb{R}^N \times [0, T] \times [0, T]$.
\end{Prop}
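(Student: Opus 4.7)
The plan is to extend the barrier argument of Proposition \ref{TF_uni_bdd} from the ``base time'' $s = 0$ (which is precisely that proposition) to an arbitrary $s \in (0, T]$. The naive reduction --- that $\tau \mapsto u^\varepsilon(x, \tau + s)$ solves the same equation on $[0, T-s]$ with initial data $u^\varepsilon(\cdot, s)$ --- is false, since the Caputo derivative remembers the values on $[0, s]$. To bypass this, I construct, for each fixed $s \in (0, T]$, spliced barriers
\[
W^\pm(x, t) \coloneqq \begin{cases} u^\varepsilon(x, t), & 0 \le t \le s, \\ u^\varepsilon(x, s) \pm M(t - s)^\alpha, & s < t \le T, \end{cases}
\]
for a constant $M > 0$ depending only on $u_0$ and $H$. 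On the history interval $[0, s]$ these agree with $u^\varepsilon$, so the non-local contribution to the Caputo derivative from $[0, s]$ is automatically matched; the adjustment takes place only on $(s, T]$.

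I then verify that $W^+$ is a viscosity supersolution and $W^-$ a viscosity subsolution of \eqref{TF_HJ_eqs}. Consider $W^+$: for a test function $\varphi$ touching $W^+$ from below at $(x_0, t_0)$ with $t_0 \le s$, we have $W^+ = u^\varepsilon$ locally and the supersolution inequality for $u^\varepsilon$ suffices. When $t_0 > s$, the function $x \mapsto \varphi(x, t_0) - M(t_0 - s)^\alpha$ touches $u^\varepsilon(\cdot, s)$ from below at $x_0$; extending this spatial test to a valid space--time test (by subtracting a smooth majorant of the continuity modulus of $u^\varepsilon$ near $(x_0, s)$) and applying the equivalent form (c) of Proposition \ref{vissol_equivalent} to $u^\varepsilon$ at $(x_0, s)$ gives
\[
J[u^\varepsilon](x_0, s) + K_{(0, s)}[u^\varepsilon](x_0, s) + H\!\left(\tfrac{x_0}{\varepsilon}, D\varphi(x_0, t_0)\right) \ge 0.
\]
It therefore suffices to establish the one-point inequality
\[
J[W^+](x_0, t_0) + K_{(0, t_0)}[W^+](x_0, t_0) \ge J[u^\varepsilon](x_0, s) + K_{(0, s)}[u^\varepsilon](x_0, s).
\]
I split $K_{(0, t_0)}[W^+]$ at $\tau = t_0 - s$: on $(0, t_0 - s)$ the integrand equals $M[(t_0 - s)^\alpha - (t_0 - \tau - s)^\alpha]/\tau^{\alpha + 1}$, which after rescaling yields a fixed positive multiple of $M$ (the Caputo derivative of $\tau \mapsto M \tau^\alpha$); on $(t_0 - s, t_0)$ it becomes a history integral of $u^\varepsilon$ against the kernel $(t_0 - r)^{-\alpha-1}$, which I compare with the kernel $(s - r)^{-\alpha - 1}$ in $K_{(0, s)}[u^\varepsilon]$ via the uniform bound of Proposition \ref{TF_uni_bdd}. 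Taking $M$ large enough (depending only on $u_0$, $H$, and $T$) closes the inequality. The argument for $W^-$ is symmetric. Proposition \ref{Comparison_principle} then yields $W^- \le u^\varepsilon \le W^+$ on $\R^N \times [0, T]$, which on $[s, T]$ reads exactly as \eqref{TF_alpha_holder}.

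The main obstacle is precisely this one-point inequality. Since $(t_0 - r)^{-\alpha-1} < (s - r)^{-\alpha-1}$ for $r \in (0, s)$ while the integrand $u^\varepsilon(x_0, s) - u^\varepsilon(x_0, r)$ has no definite sign without the very time regularity we are trying to prove, one cannot dominate the two $K$-integrals termwise; instead one must exploit the decomposition globally, letting the $O(M)$ contribution from the barrier on $(0, t_0 - s)$ absorb the history-kernel discrepancy. This delicate balance between the two Caputo kernels is the concrete manifestation, already in the regularity theory, of the incompatibility between time doubling and the non-local Caputo derivative flagged in the abstract.
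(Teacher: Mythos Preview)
Your spliced-barrier strategy looks natural, but the crucial ``one-point inequality''
\[
J[W^+](x_0,t_0)+K_{(0,t_0)}[W^+](x_0,t_0)\ \ge\ J[u^\varepsilon](x_0,s)+K_{(0,s)}[u^\varepsilon](x_0,s)
\]
cannot be closed with $M$ depending only on $u_0,H,T$. Writing $g(\xi)=u^\varepsilon(x_0,s)-\widetilde{u^\varepsilon}(x_0,\xi)$ and using the representation of Proposition~\ref{Weak_Caputo}, the difference of the two sides equals
\[
M\,\Gamma(1+\alpha)\;+\;\frac{\alpha}{\Gamma(1-\alpha)}\int_{-\infty}^{s} g(\xi)\left[\frac{1}{(t_0-\xi)^{\alpha+1}}-\frac{1}{(s-\xi)^{\alpha+1}}\right]d\xi .
\]
The bracket is \emph{negative}, and $(s-\xi)^{-\alpha-1}$ is \emph{not integrable} near $\xi=s$. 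Hence the $L^\infty$ bound of Proposition~\ref{TF_uni_bdd} on $g$ yields no control on this integral: its size is governed by how fast $g(\xi)\to0$ as $\xi\uparrow s$, i.e.\ by the very time regularity you are trying to prove. Even $\alpha$-H\"older behaviour $|g(\xi)|\le M(s-\xi)^\alpha$ only makes the integrand $O((s-\xi)^{-1})$, which still diverges logarithmically. So the ``$O(M)$ barrier contribution'' is a fixed number, while the history-kernel discrepancy is not uniformly bounded; there is nothing for it to absorb. The companion step---manufacturing a space--time test for $u^\varepsilon$ at $(x_0,s)$ by subtracting a smooth majorant of its time modulus---has the same circularity: the resulting $K_{(0,s)}$-term inherits constants from that (unknown) modulus.

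The paper avoids this trap entirely by a Perron-type argument: it takes the supremum over all subsolutions $v\le u_0(x)+Mt^\alpha$ that \emph{already} satisfy \eqref{TF_alpha_holder}, checks this supremum is still a subsolution, and shows that the Perron bump of Lemma~\ref{Perrons_method} preserves membership in this class, forcing the supremum to be a supersolution as well; uniqueness then identifies it with $u^\varepsilon$. No comparison of Caputo kernels at two different base times is ever needed.
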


\begin{proof}
Let $u^+$ be the function defined in the proof of Proposition \ref{TF_uni_bdd}, and let
\begin{equation*}
X \coloneqq \{ v \in S^- \mid v \leq u^+ \, \mbox{in} \, \mathbb{R}^N \times [0, T], \, v \, \mbox{satisfies} \, (\ref{TF_alpha_holder}) \},
\end{equation*}
where we denote $S^-$ by the set of all subsolutions of (\ref{TF_HJ_eqs}) and $M$ is the constant defined by \eqref{const-M}. 
Set $u(x, t) \coloneqq \sup_{v \in X} v(x, t)$ for $(x, t) \in \mathbb{R}^N \times [0, T]$. 
It is clear to see that $u$ satisfies \eqref{TF_alpha_holder}. 

Next, let us prove that $u$ is a viscosity solution of \eqref{TF_HJ_eqs}.
Noting that $u_0(x) - Mt^\alpha \in X$, by Lemma \ref{Closedness_supinf}, we see that $u$ is a viscosity subsolution of (\ref{TF_HJ_eqs}).
Suppose that $u$ is not a supersolution of (\ref{TF_HJ_eqs}).
Then there exists $((\hat{x}, \hat{t}), \varphi) \in (\mathbb{R}^N \times (0, T]) \times (C^1(\mathbb{R}^N \times (0, T]) \cap C(\mathbb{R}^N \times [0, T]))$ such that
\begin{equation*}
(u - \varphi)(x, t) > (u - \varphi)(\hat{x}, \hat{t}) = 0 \quad \mbox{for} \,\, (x, t) \neq (\hat{x}, \hat{t}),
\end{equation*}
and
\begin{equation*}
J[\varphi](\hat{x}, \hat{t}) + K_{(0, \hat{t})}[\varphi](\hat{x}, \hat{t}) + H \left( \frac{\hat{x}}{\varepsilon}, D\varphi(\hat{x}, \hat{t}) \right) < 0.
\end{equation*}
By Lemma \ref{Perrons_method}, there exists a viscosity subsolution $w \in \USC(\mathbb{R}^N \times [0, T])$ of \eqref{TF_HJ_eqs} satisfying $w \leq u^+$ on $\mathbb{R}^N \times [0, T]$ and $w(\hat{x}, \hat{t}) > u(\hat{x}, \hat{t})$. 
By construction of $w$ in the  proof of Lemma \ref{Perrons_method}, it is clear to see that $w$ satisfies \eqref{TF_alpha_holder}, and also $w \in X$. 
This immediately gives a contradiction. 
By the uniqueness of viscosity solutions of \eqref{TF_HJ_eqs}, we obtain $u = u^\varepsilon$ in $\mathbb{R}^N \times [0, T]$, which 
completes the proof.
\end{proof}

\begin{Prop} \label{TF_uni_space_regularity}
Assume (A1) - (A3) hold.
Let $u^\varepsilon$ be the viscosity solution of (\ref{TF_HJ_eqs}).
There exists a modulus of continuity $\omega \in C([0, \infty))$ independent of $\varepsilon$ such that
\begin{equation*}
|u^\varepsilon(x, t) - u^\varepsilon(y, t)| \leq \omega(|x - y|) \quad \mbox{for} \quad (x, y, t) \in \mathbb{R}^N \times \mathbb{R}^N \times [0, T].
\end{equation*}
Furthermore, assume (A5). 
There exists a constant 
$C > 0$ independent of $\varepsilon$ such that
\begin{equation*}
|u^\varepsilon(x, t) - u^\varepsilon(y, t)| \leq C(1+\big|\log|x-y|\big|)|x - y|
\end{equation*}
for $(x, y, t) \in \mathbb{R}^N \times \mathbb{R}^N \times [0, T]$.  
In particular, for each $\nu \in (0, 1)$, there exists a constant $C > 0$ independent of $\varepsilon$ such that
\begin{equation*}
|u^\varepsilon(x, t) - u^\varepsilon(y, t)| \leq C|x - y|^\nu \quad \mbox{for} \quad (x, y, t) \in \mathbb{R}^N \times \mathbb{R}^N \times [0, T].
\end{equation*}
\end{Prop}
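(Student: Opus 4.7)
The plan is to extend the Perron-type construction used in the proof of Proposition \ref{TF_uni_time_regularity} to simultaneously enforce a uniform spatial modulus of continuity on the viscosity solution. Let $\omega_0$ be a modulus of continuity for $u_0$, and fix a candidate modulus $\omega \ge \omega_0$ to be determined. For the first assertion, I would consider
\[
X \coloneqq \bigl\{v \in S^- : v \le u^+,\ v \text{ satisfies } \eqref{TF_alpha_holder},\ |v(x,t) - v(y,t)| \le \omega(|x-y|)\bigr\}.
\]
The function $u_0(x) - Mt^\alpha$ lies in $X$, so $u \coloneqq \sup_{v\in X} v$ is a viscosity subsolution inheriting both moduli by Lemma \ref{Closedness_supinf}. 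Arguing by contradiction as in the proof of Proposition \ref{TF_uni_time_regularity}, if $u$ fails to be a supersolution, Lemma \ref{Perrons_method} produces a localized bump-up $w = \max(u, \varphi + \lambda')$ belonging to $S^-$ with $u(y,s) < w(y,s)$ at some point. The time H\"older regularity of $w$ is verified exactly as in Proposition \ref{TF_uni_time_regularity}, so the only new task is to check that $w$ still satisfies the spatial modulus $\omega$, which would then give the desired contradiction.

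At the violation point $(\hat x, \hat t)$, the strict viscosity inequality
\[
J[\varphi](\hat x, \hat t) + K_{(0,\hat t)}[\varphi](\hat x, \hat t) + H\bigl(\hat x/\varepsilon, D\varphi(\hat x, \hat t)\bigr) < 0,
\]
together with the time H\"older regularity of $u$, which via the argument used in the proof of Proposition \ref{vissol_equivalent} yields uniform control on $J[\varphi]$ and $K_{(0,\hat t)}[\varphi]$, produces an upper bound on $H(\hat x/\varepsilon, D\varphi(\hat x, \hat t))$ depending only on $u_0$, $H$, and $T$. Coercivity (A1) then bounds $|D\varphi(\hat x, \hat t)|$ by some $R$; choosing the Perron ball small enough ensures the bump-up is $R$-Lipschitz on it, so $w \in X$ whenever $\omega$ dominates $r \mapsto Rr$ on the corresponding small scale. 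This fixes a concrete admissible choice of $\omega$.

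For the second assertion, the linear coercivity (A5) upgrades the bound to $|D\varphi| \le C(|H(\hat x/\varepsilon, D\varphi)| + 1)$. Running the same Perron argument with the candidate modulus $\omega(r) = Cr(1 + |\log r|)$ for small $r$, the bound on $J[\varphi] + K_{(0,\hat t)}[\varphi]$ self-consistently closes the estimate, the logarithmic factor arising from an integral of the form $\int_0^r \omega(\sigma)\sigma^{-\alpha-1}\,d\sigma \sim r(1+|\log r|)$ produced by the interaction of the singular Caputo kernel with increments controlled by $\omega$. The third assertion is then immediate: $r(1+|\log r|) \le C_\nu r^\nu$ for $r \in (0,1)$ and any $\nu \in (0,1)$, combined with the $L^\infty$ bound from Proposition \ref{TF_uni_bdd} for $r \ge 1$.

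The main obstacle is closing the self-consistency of the modulus under the Perron bump-up: the form of $\omega$ determines the available bound on $J[\varphi] + K_{(0,\hat t)}[\varphi]$, which via coercivity controls the local Lipschitz constant of the bump-up, which in turn must be compatible with $\omega$ itself. Identifying the correct form (a generic modulus suffices under (A1), whereas (A5) forces $\omega(r) = Cr(1+|\log r|)$) is the delicate step, and the logarithmic factor is the precise price paid for the non-locality of the Caputo derivative, reflecting the same incompatibility between quadratic doubling tests and time-fractional derivatives that motivates the paper's main technical lemma.
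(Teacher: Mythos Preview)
Your Perron-type strategy hinges on the claim that the time H\"older regularity of $u$ (Proposition~\ref{TF_uni_time_regularity}) yields a uniform lower bound on $J[\varphi](\hat x,\hat t)+K_{(0,\hat t)}[\varphi](\hat x,\hat t)$ at the failure point, from which coercivity would bound $|D\varphi(\hat x,\hat t)|$. This step does not go through. At a minimum of $u-\varphi$ one has $\varphi(\hat x,\hat t)-\varphi(\hat x,\hat t-\tau)\ge u(\hat x,\hat t)-u(\hat x,\hat t-\tau)\ge -M\tau^{\alpha}$, and hence
\[
K_{(0,\hat t)}[\varphi](\hat x,\hat t)\ \ge\ \frac{\alpha}{\Gamma(1-\alpha)}\int_{0}^{\hat t}\frac{-M\tau^{\alpha}}{\tau^{\alpha+1}}\,d\tau
\ =\ -\frac{\alpha M}{\Gamma(1-\alpha)}\int_{0}^{\hat t}\frac{d\tau}{\tau}\ =\ -\infty.
\]
So $\alpha$-H\"older regularity in time gives no uniform lower bound on the Caputo term; the bound on $H(\hat x/\varepsilon,D\varphi)$, and therefore on $|D\varphi|$, depends on the particular test function $\varphi$ handed to you by the contradiction hypothesis. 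The self-consistency loop you describe --- choose $\omega$, bound $|D\varphi|$, check the bump-up stays in $X$ --- cannot close, because the middle step produces no $\varphi$-independent constant.

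This is exactly the obstruction the paper's proof is designed around. The paper does \emph{not} run Perron for the spatial modulus; instead it takes the sup-convolution $u^{\varepsilon,\delta}$ in the \emph{time} variable, which is $(C/\delta)$-Lipschitz in $t$ (Lemma~\ref{sup_conv_property}(b)). That Lipschitz bound makes the near-origin piece of the Caputo integral finite, $K_{(0,\rho)}[u^{\varepsilon,\delta}]\ge -C\rho^{1-\alpha}/\delta$, while the far piece is controlled by H\"older regularity, $K_{(\rho,\hat t)}[u^{\varepsilon,\delta}]\ge -C\log(\hat t/\rho)$. Optimizing $\rho=\hat t\,\delta^{1/(1-\alpha)}$ gives the bound $H\le C(1+|\log\delta|)$, and \emph{this} is where the logarithm originates --- from the time-regularization parameter $\delta$, not from a spatial integral $\int_{0}^{r}\omega(\sigma)\sigma^{-\alpha-1}\,d\sigma$ as you suggest. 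One then runs a standard penalization $\Phi(x,t)=u^{\varepsilon,\delta}(x,t)-u^{\varepsilon,\delta}(y,t_0)-L|x-y|-\beta^{-2}|t-t_0|^{2}$ and optimizes over $\delta$ at the end. Your proposal is missing precisely this time-regularization device, without which no uniform gradient bound is available.
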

We follow the proof of \cite[Theorem 2.2]{LTY} with a slight modification. 

\begin{Def}
Let $f : [0, T] \mapsto \mathbb{R}$ be a bounded function.
For each $\delta > 0$, we define $f^\delta$ and $f_\delta$ by
\begin{equation*}
f^\delta(t) \coloneqq \sup_{\xi \in [0, T]} \left\{ f(\xi) - \frac{|t - \xi|^2}{2\delta} \right\}, \quad f_\delta(t) \coloneqq \inf_{\xi \in [0, T]} \left\{ f(\xi) + \frac{|t - \xi|^2}{2\delta} \right\}
\end{equation*}
for $t \in [0, T]$.
We call $f^\delta$ and $f_\delta$ the sup-convolution and the inf-convolution, respectively.
\end{Def}

\begin{Lem} \label{sup_conv_property}
Let $f : [0, T] \mapsto \mathbb{R}$ be a bounded function.
There exists $C > 0$, for each $\delta > 0$,
\begin{enumerate}
\item [(a)] $f(t) \leq f^\delta(t) \leq \| f \|_{\infty}$ for $t \in [0, T]$,
\item [(b)] $\displaystyle |f^\delta(t) - f^\delta(s)| \leq \frac{C}{\delta} |t-s|$ for $t, s \in [0, T]$.
\end{enumerate}
Furthermore, assume that $f \in C^{0, \alpha}([0, T])$ for $\alpha \in (0, 1]$, then
\begin{enumerate}
\item [(c)] $|t - \xi_\delta|^{2-\alpha} \leq 2 M \delta $ for $\xi_\delta \in \argmax \{ f(\xi) - \frac{|t - \xi|^2}{2\delta} \mid \xi \in [0, T] \}$,
\item [(d)] $\| f^\delta - f \|_{\infty} \leq (2 M)^{\frac{2}{2 - \alpha}} \delta^{\frac{\alpha}{2-\alpha}}$,
\item [(e)] $f^\delta \in C^{0, \alpha}([0, T])$,
\end{enumerate}
where $M$ is the H\"{o}lder constant of $f$. 
\end{Lem}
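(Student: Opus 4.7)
\textbf{Proof plan for Lemma \ref{sup_conv_property}.} My plan is to read each estimate directly off the variational definition of $f^\delta$, using only the boundedness of $f$ for (a)--(b) and then sharpening via the H\"{o}lder hypothesis for (c)--(e). The common workhorse will be an a priori localization of maximizers: if $\xi_t$ attains the supremum in $f^\delta(t)$, then testing $\xi=t$ in the definition gives
\begin{equation*}
f(t)\le f(\xi_t)-\frac{|t-\xi_t|^2}{2\delta},
\end{equation*}
hence $|t-\xi_t|^2\le 2\delta(f(\xi_t)-f(t))$, which I will invoke repeatedly.

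First I would dispatch (a): testing $\xi=t$ in the sup gives the lower bound, and dropping the nonpositive penalty gives the upper bound. For (b), the localization above yields $|t-\xi_t|\le 2\sqrt{\delta\|f\|_\infty}$, and I would combine the standard chain
\begin{equation*}
f^\delta(t)-f^\delta(s)\le \frac{|s-\xi_t|^2-|t-\xi_t|^2}{2\delta}\le \frac{|s-\xi_t|+|t-\xi_t|}{2\delta}\,|t-s|
\end{equation*}
with the triangle bound $|s-\xi_t|\le |t-s|+|t-\xi_t|\le T+2\sqrt{\delta\|f\|_\infty}$; after swapping $t\leftrightarrow s$, this produces a Lipschitz estimate with constant $C/\delta$ depending only on $\|f\|_\infty$ and $T$.

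For (c)--(d) I would upgrade the localization with the H\"{o}lder hypothesis: the inequality $|t-\xi_\delta|^2/(2\delta)\le f(\xi_\delta)-f(t)\le M|t-\xi_\delta|^\alpha$ rearranges to $|t-\xi_\delta|^{2-\alpha}\le 2M\delta$, which is (c); substituting this into $0\le f^\delta(t)-f(t)\le M|t-\xi_\delta|^\alpha$ and checking that $M\cdot 2^{\alpha/(2-\alpha)}\le (2M)^{2/(2-\alpha)}$ delivers (d). Finally (e) falls out of (b) for free: on the bounded interval $[0,T]$ any Lipschitz function is $\alpha$-H\"{o}lder because $|t-s|\le T^{1-\alpha}|t-s|^\alpha$.

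The hard part, if any, is merely the bookkeeping around the a priori localization of maximizers used in (b); this is a routine observation for sup-convolutions of bounded functions, and the constants produced are generous enough to absorb the lossy steps in (c)--(d).
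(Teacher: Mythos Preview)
Your treatment of (a)--(d) is correct and essentially matches the paper (which omits (a), (b) as standard and derives (c), (d) via exactly the localization inequality you wrote). Two small slips: in (d) the inequality to verify is $2^{\alpha/(2-\alpha)}M^{2/(2-\alpha)}\le (2M)^{2/(2-\alpha)}$, not the one you stated (yours can fail when $M<1$), but the correct version is immediate; and in (b) your localization step is unnecessary, since $t,s,\xi_t\in[0,T]$ already gives $|t-\xi_t|+|s-\xi_t|\le 2T$, hence the clean constant $C=T$, whereas your route produces a $C$ containing a $\sqrt{\delta}$ term that is not literally $\delta$-independent for all $\delta>0$.

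The substantive gap is in (e). Your shortcut via (b)---Lipschitz on a bounded interval implies H\"older---yields a H\"older seminorm of order $CT^{1-\alpha}/\delta$, which blows up as $\delta\to 0$. The paper, by contrast, proves (e) with a H\"older constant depending only on $M$ and $\alpha$, \emph{uniformly in $\delta$}: it splits into the regimes $|t-s|^{2-\alpha}\ge\delta$ (handled by (d) together with the H\"older regularity of $f$ itself) and $|t-s|^{2-\alpha}\le\delta$ (handled by the maximizer comparison combined with the localization bound (c)). This $\delta$-uniform version is precisely what is invoked downstream in the proof of Proposition~\ref{TF_uni_space_regularity}, where the time-H\"older constant of $u^{\varepsilon,\delta}$ must be independent of $\delta$ in order to reach the estimate $H(\cdot,\cdot)\le C(1+|\log\delta|)$. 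So while your argument establishes the bare membership $f^\delta\in C^{0,\alpha}$, it does not deliver the quantitative content the lemma is actually there to supply.
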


\begin{proof}
We omit the proofs of (a) and (b), as they are standard.
We only prove (c), (d) and (e).
Take $\xi_\delta \in [0, T]$ so that $f^\delta(t) = f(\xi_\delta) - \frac{|t - \xi_\delta|^2}{2\delta}$.
Note that
\begin{equation} \label{TF_supconv_dist}
\frac{|t - \xi_\delta|^2}{2\delta} = f(\xi_\delta) - f^\delta(t) \leq f(\xi_\delta) - f(t) \leq M|t - \xi_\delta|^\alpha.
\end{equation}
Thus, $|t - \xi_\delta|^{2-\alpha} \leq 2 M \delta $.
By (\ref{TF_supconv_dist}), we have
\begin{align*}
|f^\delta(t) - f(t)| &= \left|f(\xi_\delta) - \frac{|t - \xi_\delta|^2}{2\delta} - f(t)\right| \\
&\leq |f(\xi_\delta) - f(t)| + \frac{|t - \xi_\delta|^2}{2\delta} \leq 2 M |t - \xi_\delta|^\alpha \leq (2 M)^{\frac{2}{2 - \alpha}} \delta^{\frac{\alpha}{2-\alpha}}
\end{align*}
for $t \in [0, T]$.
Finally, let $t, s \in [0, T]$.
We consider the case $|t - s|^{2 - \alpha} \geq \delta$.
By applying (d), we have
\begin{align*}
|f^\delta(t) - f^\delta(s)| &\leq |f^\delta(t) - f(t)| + |f(t) - f(s)| + |f(s) - f^\delta(s)| \\
&\leq 2 (2 M)^{\frac{2}{2 - \alpha}} \delta^{\frac{\alpha}{2 - \alpha}} + M |t - s|^\alpha \leq C |t - s|^\alpha.
\end{align*}
We consider the case $|t - s|^{2 - \alpha} \leq \delta$.
We have
\begin{align*}
f^\delta(t) - f^\delta(s) &\leq f(\xi_\delta) - \frac{|t - \xi_\delta|^2}{2\delta} - \left( f(\xi_\delta) - \frac{|s - \xi_\delta|^2}{2\delta} \right) \\
&= \frac{1}{2\delta} (|s - \xi_\delta|^2 - |t - \xi_\delta|^2) =  \frac{1}{2\delta} (|s - t + t - \xi_\delta|^2 - |t - \xi_\delta|^2) \\
&\leq \frac{1}{2\delta} (|t - s|^2 + 2|t - \xi_\delta||t - s|).
\end{align*}
By (c), we obtain
\begin{align*}
f^\delta(t) - f^\delta(s) &\leq \frac{1}{2\delta} (|t - s|^2 + 2 (2 M \delta)^{\frac{1}{2 - \alpha}} |t - s|)
\leq C\left( \frac{|t - s|^2}{\delta} + \frac{|t - s|}{\delta^{\frac{1 - \alpha}{2 - \alpha}}}\right) \\
&\leq C \left( \frac{|t - s|^2}{|t - s|^{2-\alpha}} + \frac{|t - s|}{|t - s|^{(2-\alpha)\frac{1 - \alpha}{2 - \alpha}}}\right) = 2 C |t - s|^\alpha, 
\end{align*}
which completes the proof.
\end{proof}

\begin{Lem}
Let $u$ be a bounded viscosity subsolution of \eqref{TF_HJ_eqs}, and for $\delta \in (0, 1)$, we write $u^\delta$ and $u_\delta$ for the sup-convolution and inf-convolution of $u$, respectively.
Then $u^\delta$ and $u_\delta$ are, respectively, a viscosity subsolution and a viscosity supersolution of
\begin{equation} \label{infsup_conv_eq}
\partial_t^\alpha u^{\delta} + H \left( \frac{x}{\varepsilon}, Du^{\delta} \right) = \eta_\delta \quad \mbox{in} \,\, \mathbb{R}^N \times (M\delta^{\frac{1}{4}} , T],
\end{equation}
where $M$ is a constant suct that $M^2 \geq 2\| u \|_\infty$ and $\eta_\delta$ is a constant such that $\eta_\delta \to 0$ as $\delta \to 0$. 
\end{Lem}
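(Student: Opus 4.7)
The strategy I would follow is the classical sup/inf-convolution argument for viscosity subsolutions, adapted to the nonlocal Caputo setting: at a test point $(\hat x, \hat t)$ where $u^\delta - \varphi$ attains a local maximum, I would transfer the inequality for $u^\delta$ to one for $u$ at the nearby sup-convolution maximizer $(\hat x, \hat\tau)$ through a shifted test function, and then control the mismatch in the nonlocal operator. Pick $\hat\tau \in [0, T]$ with $u^\delta(\hat x, \hat t) = u(\hat x, \hat\tau) - \frac{(\hat t - \hat\tau)^2}{2\delta}$. The bound $\frac{(\hat t - \hat\tau)^2}{2\delta} \leq 2\|u\|_\infty \leq M^2$ gives $|\hat t - \hat\tau| \leq M\sqrt{2\delta}$, and combined with the hypothesis $\hat t > M\delta^{1/4}$ this forces $\hat\tau \geq \hat t / 2 \geq \tfrac{M}{2}\delta^{1/4}$ for all sufficiently small $\delta$; in particular $\hat\tau \in (0, T)$, so subsolution testing for $u$ at $(\hat x, \hat\tau)$ is legitimate.

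Set $\tilde\varphi(x, s) \coloneqq \varphi(x, s + \hat t - \hat\tau) + \frac{(\hat t - \hat\tau)^2}{2\delta}$. The standard chain using $u(x, s) \leq u^\delta(x, s + \hat t - \hat\tau) + \frac{(\hat t - \hat\tau)^2}{2\delta}$ (immediate from the definition of sup-convolution) shows that $u - \tilde\varphi$ attains a local maximum at $(\hat x, \hat\tau)$; since $D\tilde\varphi(\hat x, \hat\tau) = D\varphi(\hat x, \hat t)$, the equivalent form (c) of Proposition \ref{vissol_equivalent} yields
\[
J[u](\hat x, \hat\tau) + K_{(0,\hat\tau)}[u](\hat x, \hat\tau) + H\!\left(\tfrac{\hat x}{\varepsilon}, D\varphi(\hat x, \hat t)\right) \leq 0.
\]
The heart of the proof is to then establish
\[
J[u^\delta](\hat x, \hat t) + K_{(0,\hat t)}[u^\delta](\hat x, \hat t) \leq J[u](\hat x, \hat\tau) + K_{(0,\hat\tau)}[u](\hat x, \hat\tau) + \eta_\delta,
\]
with $\eta_\delta$ independent of $(\hat x, \hat t)$ and $\eta_\delta \to 0$. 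For the $J$ part I would use $u^\delta(\hat x, 0) \geq u(\hat x, 0)$ to obtain $u^\delta(\hat x,\hat t) - u^\delta(\hat x, 0) \leq u(\hat x, \hat\tau) - u(\hat x, 0)$, combined with the $\alpha$-H\"older time regularity $|u(\hat x,\hat\tau)-u(\hat x,0)| \leq M\hat\tau^\alpha$ from Proposition \ref{TF_uni_time_regularity} and the mean value theorem on $t \mapsto t^{-\alpha}$, to get $|J[u^\delta](\hat x, \hat t) - J[u](\hat x, \hat\tau)| \leq C|\hat t - \hat\tau|/\hat\tau = O(\delta^{1/4})$.

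For the $K$ term, WLOG take $\hat t \geq \hat\tau$ and split $\int_0^{\hat t} = \int_0^{\hat\tau} + \int_{\hat\tau}^{\hat t}$. On $(0, \hat\tau)$ the key pointwise inequality
\[
u^\delta(\hat x, \hat t) - u^\delta(\hat x, \hat t - s) \leq u(\hat x, \hat\tau) - u(\hat x, \hat\tau - s),
\]
which comes from $u^\delta(\hat x, \hat t - s) \geq u(\hat x, \hat\tau - s) - \frac{(\hat t - \hat\tau)^2}{2\delta}$ and the equality at $s = 0$, dividing by $s^{\alpha+1}$ and integrating upgrades directly to the integral bound by $\tfrac{\Gamma(1-\alpha)}{\alpha} K_{(0,\hat\tau)}[u](\hat x, \hat\tau)$. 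On $(\hat\tau, \hat t)$ I would use the crude bound $|u^\delta(\hat x, \hat t) - u^\delta(\hat x, \hat t - s)| \leq 2\|u\|_\infty \leq M^2$ together with $\int_{\hat\tau}^{\hat t} s^{-\alpha-1}\,ds \leq C\hat\tau^{-\alpha-1}|\hat t - \hat\tau|$; the hypothesis $\hat t > M\delta^{1/4}$ renders this $O(\delta^{1/2}\cdot\delta^{-(\alpha+1)/4}) = O(\delta^{(1-\alpha)/4}) \to 0$. The main obstacle is precisely this comparison of the nonlocal operators across a small time shift, which is the Caputo analogue of the identity $\partial_t\varphi(t,s) = -\partial_s\varphi(t,s)$ that fails here; the exponent $1/4$ in the time restriction is tuned exactly so that the extra interval $(\hat\tau, \hat t)$, where the weight $s^{-\alpha-1}$ is most singular, contributes only $o(1)$ as $\delta \to 0$. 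Combining everything yields $\eta_\delta = O(\delta^{(1-\alpha)/4})$. The inf-convolution case then follows by an analogous, symmetric computation (testing at a minimum and replacing the sup by inf throughout).
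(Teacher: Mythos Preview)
Your overall strategy matches the paper's exactly: transfer the test function from $(\hat x,\hat t)$ to the sup-convolution maximizer $(\hat x,\hat\tau)$, invoke the subsolution property of $u$ there via Proposition~\ref{vissol_equivalent}, and then compare the Caputo operators. The difference lies in how you carry out that last comparison. The paper rewrites $J[u](\hat x,\hat\tau)+K_{(0,\hat\tau)}[u](\hat x,\hat\tau)$ as the single integral $\frac{\alpha}{\Gamma(1-\alpha)}\int_{-\infty}^{\hat\tau}\frac{u(\hat x,\hat\tau)-\tilde u(\hat x,\sigma)}{|\hat\tau-\sigma|^{\alpha+1}}\,d\sigma$ (Proposition~\ref{Weak_Caputo}), shifts the integration variable so the domain becomes $(-\infty,\hat t]$, and then compares the resulting integrand pointwise with $u^\delta(\hat x,\hat t)-\tilde u^\delta(\hat x,\xi)$ on three regions $\xi\le -M\delta^{1/2}$, $|\xi|\le M\delta^{1/2}$, $M\delta^{1/2}<\xi\le\hat t$. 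Only the middle region produces an error, and bounding $\int_{-M\delta^{1/2}}^{M\delta^{1/2}}|\hat t-\xi|^{-\alpha-1}\,d\xi$ using $\hat t>M\delta^{1/4}$ gives $\eta_\delta=O(\delta^{(1-\alpha)/4})$. Your approach instead keeps $J$ and $K$ separate and controls each by a pointwise translation inequality plus a crude tail bound; it is perhaps more hands-on but gives the same rate, and your key pointwise bound $u^\delta(\hat x,\hat t)-u^\delta(\hat x,\hat t-s)\le u(\hat x,\hat\tau)-u(\hat x,\hat\tau-s)$ is exactly what drives the paper's case (3).

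One caveat: you invoke Proposition~\ref{TF_uni_time_regularity} to bound $|u(\hat x,\hat\tau)-u(\hat x,0)|\le M\hat\tau^\alpha$, but that proposition is proved for the \emph{solution} $u^\varepsilon$, whereas the lemma is stated for an arbitrary bounded subsolution. This is harmless for two reasons: the lemma is only applied to $u^\varepsilon$ in the paper, and in any case you can replace the H\"older bound by the crude $|B|\le 2\|u\|_\infty$ in your $J$-estimate and still get $J[u^\delta](\hat x,\hat t)-J[u](\hat x,\hat\tau)\le C\min(\hat t,\hat\tau)^{-\alpha-1}|\hat t-\hat\tau|=O(\delta^{(1-\alpha)/4})$, matching your $K$-term rate and the paper's $\eta_\delta$.
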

We refer to \cite[Lemma 4.2]{TY} and \cite[Proposition 4.2]{N} for similar results. 
\begin{proof}
We only prove that $u^\delta$ is a viscosity subsolution of \eqref{infsup_conv_eq}.
Take $((\hat{x}, \hat{t}), \varphi) \in (\mathbb{R}^N \times (M\delta^{\frac{1}{4}} , T]) \times (C^1(\mathbb{R}^N \times (0, T]) \cap C(\mathbb{R}^N \times [0, T]))$ so that 
\begin{equation*}
\max_{\mathbb{R}^N \times [M\delta^{\frac{1}{4}} , T]} (u^\delta - \varphi) = (u^\delta - \varphi)(\hat{x}, \hat{t}).
\end{equation*}
Let $\hat{t}_\delta \in [0, T]$ be such that
\begin{equation*}
u^\delta(\hat{x}, \hat{t}) = u(\hat{x}, \hat{t}_\delta) - \frac{ |\hat{t} - \hat{t}_\delta|^2 }{\delta}.
\end{equation*}
Then, we see that
\begin{equation*}
|\hat{t} - \hat{t}_\delta|^2 = (u(\hat{x}, \hat{t}_\delta) - u^\delta(\hat{x}, \hat{t})) \delta \leq 2 \| u \|_\infty \delta.
\end{equation*}
Thus, we have $\hat{t}_\delta \geq \hat{t} - (2 \| u \|_\infty \delta)^\frac{1}{2} > M\delta^\frac{1}{4} - (2 \| u \|_\infty \delta)^\frac{1}{2} \geq M\delta^\frac{1}{4}(1 - \delta^\frac{1}{4})> 0$.
Noting that $(u^\delta - \varphi)(x, t) \leq (u^\delta - \varphi)(\hat{x}, \hat{t})$, we see that
\begin{align*}
u(&\hat{x}, \hat{t}_\delta) - \frac{|\hat{t} - \hat{t}_\delta|^2}{\delta} - \varphi(\hat{x}, \hat{t}) = (u^\delta - \varphi)(\hat{x}, \hat{t})
\geq (u^\delta - \varphi)(x, t) \\
&= \sup_{\tau \in [0, T]} \left\{ u(x, \tau) - \frac{|t - \tau|^2}{\delta} \right\} - \varphi(x, t) 
\geq u(x, t + \hat{t}_\delta - \hat{t}) - \frac{|\hat{t} - \hat{t}_\delta|^2}{\delta} - \varphi(x, t)
\end{align*}
for $(x, t)$ in a neighborhood of $(\hat{x}, \hat{t})$.
Thus, the function $(x, t) \mapsto u(x, t) - \varphi(x, t - \hat{t}_\delta + \hat{t})$ attains a local maximum at $(\hat{x}, \hat{t}_\delta)$.
Therefore, there exists $\tilde{\varphi} \in C^1(\mathbb{R}^N \times (0, T]) \cap C(\mathbb{R}^N \times [0, T])$ such that $\tilde{\varphi}(x, t) = \varphi(x, t - \hat{t}_\delta + \hat{t})$ for $(x, t)$ in a neighborhood of $(\hat{x}, \hat{t}_\delta)$ and
\begin{equation*}
\max_{\mathbb{R}^N \times [0, T]} (u - \tilde{\varphi}) = (u - \tilde{\varphi})(\hat{x}, \hat{t}_\delta).
\end{equation*}
Since $u$ is a viscosity subsolution to \eqref{TF_HJ_eqs}, by Proposition \ref{vissol_equivalent}, we have
\begin{equation*}
J[u](\hat{x}, \hat{t}_\delta) + K_{(0, \hat{t}_\delta)}[u](\hat{x}, \hat{t}_\delta) + H \left( \frac{\hat{x}}{\varepsilon}, D\tilde{\varphi}(\hat{x}, \hat{t}_\delta) \right) \leq 0.
\end{equation*}

Define
\begin{equation*}
\tilde{u}(x, t) \coloneqq 
\left\{ 
\begin{array}{ll}
u(x, t) & \mbox{in} \, \, \mathbb{R}^N \times [0, T], \\
u(x, 0) & \mbox{in} \, \, \mathbb{R}^N \times (-\infty, 0].
\end{array} \right.
\end{equation*}
Then, we see that
\begin{align*}
J[u](\hat{x}, \hat{t}_\delta) + K_{(0, \hat{t}_\delta)}[u](\hat{x}, \hat{t}_\delta) &= \frac{\alpha}{\Gamma(1 - \alpha)} \int_{-\infty}^{\hat{t}_\delta} \frac{u(\hat{x}, \hat{t}_\delta) - \tilde{u}(\hat{x}, \tau)}{|\hat{t}_\delta - \tau|^{\alpha + 1}} d\tau \\
&= \frac{\alpha}{\Gamma(1 - \alpha)} \int_{-\infty}^{\hat{t}} \frac{u(\hat{x}, \hat{t}_\delta) - \tilde{u}(\hat{x}, \xi + \hat{t}_\delta - \hat{t})}{|\hat{t} - \xi|^{\alpha + 1}} d\xi \\
&= \frac{\alpha}{\Gamma(1 - \alpha)} \int_{-\infty}^{\hat{t}} \frac{u^\delta(\hat{x}, \hat{t}) + \frac{|\hat{t}_\delta - \hat{t}|^2}{\delta} - \tilde{u}(\hat{x}, \xi + \hat{t}_\delta - \hat{t})}{|\hat{t} - \xi|^{\alpha + 1}} d\xi.
\end{align*}
We set $\Psi(\xi) \coloneqq u^\delta(\hat{x}, \hat{t}) + \frac{|\hat{t}_\delta - \hat{t}|^2}{\delta} - \tilde{u}(\hat{x}, \xi + \hat{t}_\delta - \hat{t})$ for $\xi \in (-\infty, \hat{t}]$.
We divide into three cases: (1) $\xi \leq -M\delta^\frac{1}{2}$, (2) $-M\delta^\frac{1}{2} < \xi \leq M\delta^\frac{1}{2}$, and (3) $M\delta^\frac{1}{2} < \xi \leq \hat{t}$.
First, we consider case (1).
Since $\xi + \hat{t}_\delta - \hat{t} \leq -M\delta^\frac{1}{2} + M\delta^\frac{1}{2} = 0$, we have $\tilde{u}(\hat{x}, \xi + \hat{t}_\delta - \hat{t}) = u(\hat{x}, 0) \leq u^\delta(\hat{x}, 0) = \tilde{u}^\delta(\hat{x}, \xi)$.
We obtain
\begin{equation*}
\Psi(\xi) \geq u^\delta(\hat{x}, \hat{t}) + \frac{(t_\delta - \hat{t})^2}{\delta} - u^\delta(\hat{x}, 0) \geq u^\delta(\hat{x}, \hat{t}) - \tilde{u}^\delta(\hat{x}, \xi).
\end{equation*}
Next, in case (2), we have
\begin{align*}
\Psi(\xi) &\geq u^\delta(\hat{x}, \hat{t}) - \tilde{u}(\hat{x}, \xi + \hat{t}_\delta - \hat{t}) \\
&\geq u^\delta(\hat{x}, \hat{t}) - \tilde{u}^\delta(\hat{x}, \xi) + \tilde{u}^\delta(\hat{x}, \xi) - \tilde{u}(\hat{x}, \xi + \hat{t}_\delta - \hat{t}) \\
&\geq u^\delta(\hat{x}, \hat{t}) - \tilde{u}^\delta(\hat{x}, \xi) - 2\| u \|_\infty.
\end{align*}
Finally. we consider case (3).
Since $\xi + \hat{t}_\delta - \hat{t} > M\delta^\frac{1}{2} - M\delta^\frac{1}{2} = 0$, we have $\tilde{u}(\hat{x}, \xi + \hat{t}_\delta - \hat{t}) = u(\hat{x}, \xi + \hat{t}_\delta - \hat{t})$.
Therefore, we obtain
\begin{align*}
\Psi(\xi) &= u^\delta(\hat{x}, \hat{t}) + \frac{|\hat{t}_\delta - \hat{t}|^2}{\delta} - u(\hat{x}, \xi + \hat{t}_\delta - \hat{t}) \\
&= u^\delta(\hat{x}, \hat{t}) + \frac{|\xi - (\xi + \hat{t}_\delta - \hat{t})|^2}{\delta} - \tilde{u}(\hat{x}, \xi + \hat{t}_\delta - \hat{t})
\geq u^\delta(\hat{x}, \hat{t}) - \tilde{u}^\delta(\hat{x}, \xi).
\end{align*}
Hence, we obtain
\begin{align*}
J[u](\hat{x}, \hat{t}_\delta) + K_{(0, \hat{t}_\delta)}[u](\hat{x}, \hat{t}_\delta) &= \frac{\alpha}{\Gamma(1 - \alpha)} \int_{-\infty}^{\hat{t}} \frac{\Psi(\xi)}{|\hat{t} - \xi|^{\alpha + 1}} d\xi \\
&\geq \frac{\alpha}{\Gamma(1 - \alpha)} \left( \int_{-\infty}^{\hat{t}} \frac{u^\delta(\hat{x}, \hat{t}) - \tilde{u}^\delta(\hat{x}, \xi)}{|\hat{t} - \xi|^{\alpha + 1}} d\xi - \int_{-M\delta^\frac{1}{2}}^{M\delta^\frac{1}{2}} \frac{2 \|u\|_\infty}{|\hat{t} - \xi|^{\alpha + 1}} d\xi \right) \\
&\geq J[u^\delta](\hat{x}, \hat{t}) + K_{(0, \hat{t})}[u^\delta](\hat{x}, \hat{t}_\delta) - \frac{\alpha M^2}{\Gamma(1 - \alpha)} \int_{-M\delta^\frac{1}{2}}^{M\delta^\frac{1}{2}} \frac{1}{|\hat{t} - \xi|^{\alpha + 1}} d\xi.
\end{align*}
Noting that $\hat{t} > M\delta^\frac{1}{4}$ and $\frac{1}{2} \delta^{\frac{1}{4}} > \delta^{\frac{1}{2}}$ for small $0 < \delta < 1$, we have
\begin{align*}
\int_{-M\delta^\frac{1}{2}}^{M\delta^{\frac{1}{2}}} \frac{1}{|\hat{t} - \xi|^{\alpha + 1}} d\xi
&\leq \int_{-M\delta^{\frac{1}{2}}}^{M\delta^{\frac{1}{2}}} \frac{1}{(M\delta^\frac{1}{4} - M\delta^\frac{1}{2})^{\alpha + 1}} d\xi \\
&\leq \left( \frac{M\delta^{\frac{1}{4}}}{2} \right)^{-(\alpha+1)} 2M\delta^\frac{1}{2} = 2^{\alpha+2} M^{-\alpha} \delta^{\frac{1-\alpha}{4}},
\end{align*}
which implies
\begin{equation*}
J[u](\hat{x}, \hat{t}_\delta) + K_{(0, \hat{t}_\delta)}[u](\hat{x}, \hat{t}_\delta)
\geq
J[u^\delta](\hat{x}, \hat{t}) + K_{(0, \hat{t})}[u^\delta](\hat{x}, \hat{t}_\delta) - \eta_\delta,
\end{equation*}
where
\begin{equation*}
\eta_\delta \coloneqq \frac{2^{\alpha+2} \alpha M^{2-\alpha}}{\Gamma(1 - \alpha)} \delta^{\frac{1-\alpha}{4}}.
\end{equation*}
This completes the proof.
\end{proof}

\begin{proof} [Proof of Proposition \ref{TF_uni_space_regularity}]

For $\delta > 0$, we denote by $u^{\varepsilon, \delta}$ the sup-convolution of $u^\varepsilon$.
For fixed $y \in \mathbb{R}^N, t_0 > 0,$ and $\beta, L > 0$, we set
\begin{equation*}
\Phi(x, t) \coloneqq u^{\varepsilon, \delta}(x, t) - u^{\varepsilon, \delta}(y, t_0) - L|x - y| - \frac{|t - t_0|^2}{\beta^2},
\end{equation*}
where $L$ will be fixed later.
Since $\Phi(x, t) \to -\infty$ as $|x| \to \infty$ for any $t \in [0, T]$ and $\Phi$ is bounded from above, $\Phi$ attains a maximum at a point $(\hat{x}, \hat{t}) \in \mathbb{R}^N \times [0, T]$.
We claim that if $L$ is sufficiently large, then $\hat{x} = y$.
Suppose that $\hat{x} \neq y$.
Since $u^{\varepsilon, \delta}$ is the viscosity subsolution of (\ref{TF_HJ_eqs}), by Proposition \ref{vissol_equivalent}, for every $0 < \rho < \hat{t}$, we have
\begin{equation} \label{TF_space_reg_aprox}
J[u^{\varepsilon, \delta}](\hat{x}, \hat{t}) + K_{(0, \rho)}[u^{\varepsilon, \delta}](\hat{x}, \hat{t}) +  K_{(\rho, \hat{t})}[u^{\varepsilon, \delta}](\hat{x}, \hat{t}) + H \left (\frac{\hat{x}}{\varepsilon}, L\frac{\hat{x} - y}{|\hat{x} - y|} \right) \leq \eta_\delta.
\end{equation}
By Lemma \ref{sup_conv_property} (e), we have
\begin{equation*}
J[u^{\varepsilon, \delta}](\hat{x}, \hat{t}) = \frac{u^{\varepsilon, \delta}(\hat{x}, \hat{t}) - u^{\varepsilon, \delta}(\hat{x}, 0)}{\hat{t}^\alpha \Gamma(1 - \alpha)} \geq 
- \frac{C}{\Gamma(1 - \alpha)},
\end{equation*}
and
\begin{align*}
K_{(\rho, \hat{t})}[u^{\varepsilon, \delta}](\hat{x}, \hat{t}) &= \frac{\alpha}{\Gamma(1 - \alpha)} \int_\rho^{\hat{t}} \frac{u^{\varepsilon, \delta}(\hat{x}, \hat{t}) - u^{\varepsilon, \delta}(\hat{x}, \hat{t} - \tau)}{\tau^{\alpha + 1}} d\tau \\
&\geq \frac{\alpha}{\Gamma(1 - \alpha)} \int_{\rho}^{\hat{t}} \frac{- C\tau^\alpha}{\tau^{\alpha + 1}} d\tau = - \frac{\alpha C}{\Gamma(1 - \alpha)} \log \frac{\hat{t}}{\rho}.
\end{align*}{}
Next, by Lemma \ref{sup_conv_property} (b), we have
\begin{align*}
K_{(0, \rho)}[u^{\varepsilon, \delta}](\hat{x}, \hat{t}) &= \frac{\alpha}{\Gamma(1 - \alpha)} \int_0^\rho \frac{u^{\varepsilon, \delta}(\hat{x}, \hat{t}) - u^{\varepsilon, \delta}(\hat{x}, \hat{t} - \tau)}{\tau^{\alpha + 1}} d\tau \\
&\geq \frac{\alpha}{\Gamma(1 - \alpha)} \int_0^\rho \frac{-\frac{C}{\delta} \tau}{\tau^{\alpha + 1}} d\tau
= - \frac{\alpha}{\Gamma(1 - \alpha)} \frac{C}{\delta} \frac{\rho^{1 - \alpha}}{1 - \alpha}.
\end{align*}
Combining these with (\ref{TF_space_reg_aprox}), we obtain
\begin{equation*}
H \left( \frac{\hat{x}}{\varepsilon}, L\frac{\hat{x} - y}{|\hat{x} - y|} \right) \leq \eta_\delta + \frac{C}{\Gamma(1 - \alpha)} + \frac{\alpha C}{\Gamma(1 - \alpha)} \log \frac{\hat{t}}{\rho} + \frac{\alpha}{\Gamma(1 - \alpha)} \frac{C}{\delta} \frac{\rho^{1 - \alpha}}{1 - \alpha} \leq C (1 + \frac{\rho^{1-\alpha}}{\delta} + \log \frac{\hat{t}}{\rho}).
\end{equation*}
By setting $\rho = \hat{t}\delta^{\frac{1}{1-\alpha}}$, we obtain
\begin{equation} \label{Sp_reg_coer_esti}
H \left( \frac{\hat{x}}{\varepsilon}, L\frac{\hat{x} - y}{|\hat{x} - y|} \right) \leq C (1 + \hat{t}^{1-\alpha} + \frac{1}{1-\alpha} |\log \delta|) \leq C (1 + |\log \delta|). 
\end{equation}
If we take $L = L(\delta)$ large, then, by the coercivity of $H$, we get a contradiction.

Therefore, we have $\Phi(x, \hat{t}) \leq \Phi(\hat{x}, \hat{t}) = \Phi(y, \hat{t})$, i.e.,
\begin{equation*}
u^{\varepsilon, \delta}(x, \hat{t}) - u^{\varepsilon, \delta}(y, t_0) - L(\delta)|x - y| - \frac{|\hat{t} - t_0|^2}{\beta^2}
\leq
u^{\varepsilon, \delta}(y, \hat{t}) - u^{\varepsilon, \delta}(y, t_0) - \frac{|\hat{t} - t_0|^2}{\beta^2}.
\end{equation*}
Taking $\beta \to 0$, we have
\begin{equation*}
u^{\varepsilon, \delta}(x, t_0) - u^{\varepsilon, \delta}(y, t_0) \leq L(\delta) |x - y| \quad \mbox{for all} \,\, x \in \mathbb{R}^N.
\end{equation*}
Therefore, by Lemma \ref{sup_conv_property} (d), we obtain
\begin{equation*}
u^\varepsilon(x, t) - u^\varepsilon(y, t) \leq u^{\varepsilon, \delta}(x, t) - u^{\varepsilon, \delta}(y, t) + C \delta^{\frac{\alpha}{2 - \alpha}} 
\leq L(\delta)|x - y| + C \delta^{\frac{\alpha}{2 - \alpha}}
\end{equation*}
for some $C > 0$.

Finally, define
\begin{equation*}
\omega(r) \coloneqq \inf_{\delta > 0} \{ L(\delta)r + C \delta^{\frac{\alpha}{2 - \alpha}} \}.
\end{equation*}
We claim that $\omega(r) \to 0$ as $r \to 0$.
Letting $\delta_r > 0$ so that $L(\delta_r) = r^{-\frac{1}{2}}$, we have
\begin{equation*}
0 \leq \omega(r) \leq r^{\frac{1}{2}} + C\delta_r^{\frac{\alpha}{2 - \alpha}}.
\end{equation*}
Since $\delta_r \to 0$ as $r \to 0$, taking $r \to 0$ as above, this completes the proof of modulus of continuity.

In addition, we assume (A5), i.e., there exists a constant $C > 0$ such that
\begin{equation*}
H(x, p) \geq C^{-1} |p| - C \quad \mbox{for} \, \, x, p \in \mathbb{R}^N.
\end{equation*}
By \eqref{Sp_reg_coer_esti}, we have
\begin{equation*}
L \leq C(1 + |\log \delta|).
\end{equation*}
By similar argument to the above, we obtain
\begin{equation}
u^\varepsilon(x, t) - u^\varepsilon(y, t) \leq C (1 + |\log \delta|) |x - y| + C\delta^{\frac{\alpha}{2 - \alpha}}
\end{equation}
for $x, y \in \mathbb{R}^N$ and $t \in (0, T]$.
Thus, taking the infimum with respect to $\delta$, we have
\begin{align*}
u^\varepsilon(x, t) - u^\varepsilon(y, t) &\leq \inf_{\delta  > 0} \{ C (1 + |\log \delta|) |x - y| + C\delta^{\frac{\alpha}{2 - \alpha}} \} \\
&\leq C (1 + |\log |x - y|^{\frac{2-\alpha}{\alpha}} |) |x - y| + C|x - y|^{\frac{2-\alpha}{\alpha} \cdot \frac{\alpha}{2 - \alpha}} \\
&\leq C \left( 1 + \left| \log |x - y| \right| \right) |x - y|,
\end{align*}
which completes the proof.
\end{proof}

\begin{Rem}
In \cite[Lemma 6.1]{GN}, they show that Lipschitz continuous with respect to the space variable.
However, applying this result directly to our problem, we obtain
\begin{equation*}
|u^\varepsilon(x, t) - u^\varepsilon(y, t)| \leq \left( \Lip[u_0] + \frac{Ct^\alpha}{\ep}\right) |x - y|.
\end{equation*}
Therefore, this does not give us a uniform Lipschitz estimate.
On the other hand, by \cite[Lemma 6.1]{GN} we see that the limit problem \eqref{TF_HJ} has the unique Lipschitz continuous viscosity solution.
Therefore, we can expect a $\ep$-uniform Lipschitz estimate, but this still remains open.
\end{Rem}

\section{Perturbed test function method}


\begin{Lem} \label{TF_half_relaxed_lim}
For each $\varepsilon > 0$, let $u^\varepsilon$ be the viscosity solution of (\ref{TF_HJ_eqs}). Set
\begin{align*}
\displaystyle & u^*(x, t) \coloneqq \limsup_{\varepsilon \to 0} {}^* u^\varepsilon(x, t) = \lim_{\varepsilon \to 0} \sup \{ u^\delta (y, s) \mid |x - y| \leq \varepsilon, |t - s| \leq \varepsilon, \delta \leq \varepsilon \}, \\
\displaystyle & u_*(x, t) \coloneqq \liminf_{\varepsilon \to 0} {}_* u^\varepsilon(x, t) = \lim_{\varepsilon \to 0} \inf \{ u^\delta (y, s) \mid |x - y| \leq \varepsilon, |t - s| \leq \varepsilon, \delta \leq \varepsilon \}.
\end{align*}
Then, $u^*$ and $u_*$ are a viscosity subsolution and a viscosity supersolution of (\ref{TF_HJ}), respectively.
\end{Lem}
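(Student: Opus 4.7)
The plan is to run the classical perturbed test function method of Evans \cite{Evans,Evans2}, leveraging the key observation that the Caputo derivatives of a time-independent corrector vanish, so the time-fractional structure of the equation survives the ansatz. We only address the subsolution property of $u^*$; the supersolution argument for $u_*$ is symmetric.

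We argue by contradiction. Suppose that $u^*$ fails the subsolution property at some $(\hat x,\hat t)\in\R^N\times(0,T]$: there exists $\varphi\in C^1(\R^N\times(0,T])\cap C(\R^N\times[0,T])$ such that $u^*-\varphi$ has a strict local maximum equal to $0$ at $(\hat x,\hat t)$ and
\[
J[\varphi](\hat x,\hat t)+K_{(0,\hat t)}[\varphi](\hat x,\hat t)+\overline{H}(D\varphi(\hat x,\hat t))\revcoloneqq 2\theta>0.
\]
Set $p\coloneqq D\varphi(\hat x,\hat t)$, let $v=v(\cdot,p)\in\Lip(\T^N)$ be a viscosity solution of the cell problem \eqref{TF_CP}, and define the perturbed test function $\varphi^\varepsilon(x,t)\coloneqq \varphi(x,t)+\varepsilon v(x/\varepsilon)$. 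Since $\varepsilon v(\cdot/\varepsilon)\to 0$ uniformly as $\varepsilon\to 0$, the standard argument for half-relaxed limits produces a subsequence $\varepsilon_n\to 0$ and points $(x_n,t_n)\to(\hat x,\hat t)$ at which $u^{\varepsilon_n}-\varphi^{\varepsilon_n}$ attains a local maximum, with $u^{\varepsilon_n}(x_n,t_n)\to u^*(\hat x,\hat t)$.

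The decisive point is that $v(x/\varepsilon)$ is independent of $t$, so for every $(x,t)\in\R^N\times(0,T]$ one has
\[
J[\varphi^\varepsilon](x,t)=J[\varphi](x,t),\qquad K_{(0,t)}[\varphi^\varepsilon](x,t)=K_{(0,t)}[\varphi](x,t).
\]
If $\varphi^\varepsilon$ were $C^1$, plugging it into the subsolution inequality for $u^{\varepsilon_n}$, combining with continuity of $J[\varphi]$ and $K_{(0,\cdot)}[\varphi]$ at $(\hat x,\hat t)$ (Proposition \ref{K_conti}) together with the cell problem identity $H(y,p+Dv(y))=\overline{H}(p)$ on $\T^N$, and sending $\varepsilon_n\to 0$ would yield
\[
J[\varphi](\hat x,\hat t)+K_{(0,\hat t)}[\varphi](\hat x,\hat t)+\overline{H}(p)\le 0,
\]
contradicting the strict inequality above. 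Thus the time-fractional nature of the equation poses no new difficulty at the qualitative level because the corrector decouples from $J$ and $K$.

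The main obstacle is that $v$ is only Lipschitz, so $\varphi^\varepsilon$ is not $C^1$ and cannot be inserted directly into Definition \ref{Def_vis_sol}. We would resolve this in the standard way: replace $v$ by a sup-convolution $v^\eta$, which is semiconvex, converges uniformly to $v$ as $\eta\to 0$, and remains a subsolution of the cell problem with a vanishing error. Using $v^\eta$ in place of $v$, the argument above produces the subsolution inequality up to an error $o_\eta(1)$, and sending $\eta\to 0$ after $\varepsilon_n\to 0$ delivers the contradiction. An alternative is to use the equivalent formulation (c) of Proposition \ref{vissol_equivalent} and double variables in $x$ and $y\approx x/\varepsilon$ at a small scale, so as to use the viscosity subsolution property of $v$ directly near $x_n/\varepsilon_n$ without regularization.
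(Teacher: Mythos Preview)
Your overall plan---perturbed test function with corrector $v$, together with the key observation that $\varepsilon v(x/\varepsilon)$ is time-independent so $J$ and $K$ pass through to $\varphi$ unchanged---is correct and is exactly the mechanism the paper exploits. However, a direction slip recurs: for the \emph{subsolution} test of $u^*$ you must use the \emph{supersolution} property of $v$, not the subsolution property. From the subsolution inequality for $u^{\varepsilon_n}$ you obtain $J[\varphi]+K[\varphi]+H(\cdot)\le 0$; to replace $H$ by $\overline H(p)$ you need the \emph{lower} bound $H(y,p+\cdot)\ge\overline H(p)$, which is the supersolution inequality for \eqref{TF_CP}. You write ``subsolution'' both in the sup-convolution paragraph and in the doubling-variable alternative.

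This same slip undermines the sup-convolution route. Sup-convolution preserves subsolutions, the wrong direction here; switching to inf-convolution $v_\eta$ preserves the supersolution property but yields only a semiconcave (still not $C^1$) function, so $\varphi+\varepsilon v_\eta(\cdot/\varepsilon)$ is not an admissible test function in Definition~\ref{Def_vis_sol}. Worse, at a maximum of $u^{\varepsilon}-\varphi-\varepsilon v_\eta(\cdot/\varepsilon)$ one can only extract an element of $D^+v_\eta(x_n/\varepsilon_n)$, whereas the supersolution test for $v_\eta$ requires $D^-v_\eta$; these do not match unless $v_\eta$ happens to be differentiable at that specific point, and no simple regularization aligns them without extra structure on $H$ (e.g.\ convexity in $p$). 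Your ``alternative''---doubling the spatial variable via a penalty $|x-y|^2/(2a)$, testing $u^\varepsilon$ in $(x,t)$ and $v$ as a supersolution in $y/\varepsilon$ separately, using the Lipschitz constant of $v$ to bound $(x-y)/a$ uniformly in $a$, and sending $a\to 0$ then $\varepsilon_m\to 0$ with the continuity from Proposition~\ref{K_conti}---is exactly the paper's proof and is the correct route.
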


\begin{proof}
We only prove that $u^*$ is a viscosity subsolution of (\ref{TF_HJ}), we can similarly prove that $u_*$ is a viscosity supersolution of \eqref{TF_HJ}.

Let $\varphi \in C^1(\mathbb{R}^N \times (0, T]) \cap C(\mathbb{R}^N \times [0, T])$ such that $u^* - \varphi$ has a strict maximum at $(\hat{x}, \hat{t}) \in \mathbb{R}^N \times (0, T]$.
Let $P = D\varphi(\hat{x}, \hat{t}) \in \mathbb{R}^N$, and let $v \in $ Lip $(\mathbb{T}^N)$ be a viscosity solution of (\ref{TF_CP}).
Choose a sequence $\varepsilon_m \to 0$ $(m \in \mathbb{N})$ such that
\begin{equation*}
\displaystyle u^*(\hat{x}, \hat{t}) = \lim_{m \to \infty} u^{\varepsilon_m}(x_m, t_m) \quad \mbox{for} \, (x_m, t_m) \in \mathbb{R}^N \times (0, T).
\end{equation*}

We set
\begin{equation*}
\Phi^{m, a}(x, y, t) \coloneqq u^{\varepsilon_m}(x, t) - \varphi(x, t) - \varepsilon_m v \left( \frac{y}{\varepsilon_m} \right) - \frac{|x - y|^2}{2a} \quad \mbox{for} \, a > 0.
\end{equation*}
For every $m \in \mathbb{N}$ and $a > 0$, there exists $(x_{m, a}, y_{m, a}, t_{m, a}) \in \mathbb{R}^N \times \mathbb{R}^N \times (0, T)$ such that
\begin{equation*}
\max_{\mathbb{R}^N \times \mathbb{R}^N \times [0, T]} \Phi^{m, a}(x, y, t) = \Phi^{m, a} (x_{m, a}, y_{m, a}, t_{m, a})
\end{equation*}
and up to passing some subsequences
\begin{equation*}
\left \{
\begin{array}{ll}
(x_{m, a}, y_{m, a}, t_{m, a}) \to (x_m, x_m, t_m) & \mbox{as} \quad a \to 0, \\
(x_m, t_m) \to (\hat{x}, \hat{t}) & \mbox{as} \quad m \to \infty, \\
\displaystyle \lim_{m \to \infty} \lim_{a \to \infty} u^{\varepsilon_m}(x_{m, a}, t_{m, a}) = u^*(\hat{x}, \hat{t}).
\end{array}
\right.
\end{equation*}

Notice that the function $(x, t) \mapsto \Phi^{m, a}(x, y_{m, a}, t)$ has a maximum at $(x_{m, a}, t_{m, a})$.
Since $u^{\varepsilon_m}$ is a viscosity subsolution of (\ref{TF_HJ_eqs}), we have
\begin{equation} \label{Hom_u^ep}
\displaystyle J[\varphi](x_{m, a}, t_{m, a}) + K_{(0, t_{m, a})}[\varphi](x_{m, a}, t_{m, a}) + H \left( \frac{x_{m, a}}{\varepsilon_m}, D\varphi(x_{m, a}, t_{m, a}) + \frac{x_{m, a} - y_{m, a}}{a} \right) \ \leq  0.
\end{equation}
On the other hand, notice that the function $y \mapsto -\frac{1}{\varepsilon_m} \Phi^{m, a}(x_{m, a}, \varepsilon_m y, t_{m, a})$ has a minimum at $\frac{y_{m, a}}{\varepsilon_m}$.
Since $v$ is a viscosity supersolution of (\ref{TF_CP}), we have
\begin{equation} \label{Hom_v}
\displaystyle H \left( \frac{y_{m, a}}{\varepsilon_m}, P + \frac{x_{m, a} - y_{m, a}}{a} \right) \ \geq  \overline{H}(P).
\end{equation}
Since $\Phi^{m, a}(x_{m, a}, y_{m, a}, t_{m, a}) \geq \Phi^{m, a}(x_{m, a}, x_{m, a}, t_{m, a})$, we have
\begin{equation*}
\displaystyle \frac{|x_{m, a} - y_{m, a}|^2}{2a} \leq \varepsilon_m ( v(\frac{x_{m, a}}{\varepsilon_m}) - v(\frac{y_{m, a}}{\varepsilon_m}) ) \leq \varepsilon_m L \frac{|x_{m, a} - y_{m, a}|}{\varepsilon_m} = L |x_{m, a} - y_{m, a}|,
\end{equation*}
where $L$ is a Lipschitz constant of $v$.
Thus, we have $\frac{|x_{m, a} - y_{m, a}|}{a} \leq 2L$.
If necessary, taking a subsequence, we can assume that
\begin{equation*}
\frac{x_{m, a} - y_{m, a}}{a} \to Q_m \in \mathbb{R}^N \quad \mbox{as} \, \, a \to 0.
\end{equation*}

By Proposition \ref{K_conti} and taking the limit $a \to 0$ in (\ref{Hom_u^ep}) and (\ref{Hom_v}) to get
\begin{align*}
\displaystyle J[\varphi](x_{m}, t_{m}) + K_{(0, t_{m})}[\varphi](x_{m}, t_{m}) + H \left( \frac{x_m}{\varepsilon_m}, D\varphi(x_m, t_m) + Q_m \right) \ \leq  0, \\
\displaystyle H \left( \frac{x_m}{\varepsilon_m}, P + Q_m \right) \geq \overline{H}(P).
\end{align*}
Combine the above two inequality, we obtain
\begin{align*}
\displaystyle &J[\varphi](x_{m}, t_{m}) + K_{(0, t_{m})}[\varphi](x_{m}, t_{m}) + \overline{H}(P) \\
& \leq H \left( \frac{x_m}{\varepsilon_m}, P + Q_m \right) - H \left( \frac{x_m}{\varepsilon_m}, D\varphi(x_m, t_m) + Q_m \right) \\
& \leq \omega_H (|P - D\varphi(x_m, t_m)|).
\end{align*}
Taking the limit $m \to \infty$ to conclude that
\begin{equation*}
J[\varphi](\hat{x}, \hat{t}) + K_{(0, \hat{t})}[\varphi](\hat{x}, \hat{t}) + \overline{H} ( D\varphi(\hat{x}, \hat{t}) ) \ \leq  0.
\end{equation*} 
\end{proof}

\begin{Lem} \label{TF_half_relaxed_uni}
Let $u^*$ and $u_*$ be the functions defined by Lemma \ref{TF_half_relaxed_lim}.
Then, $u^* = u_* = \overline{u}$ in $\mathbb{R}^N \times [0, T]$,
where $\overline{u}$ is the unique viscosity solution to (\ref{TF_HJ}).
\end{Lem}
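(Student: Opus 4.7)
The approach is to sandwich $u^*$ and $u_*$ using the comparison principle for the effective equation \eqref{TF_HJ}. By definition $u_* \leq u^*$, and Lemma \ref{TF_half_relaxed_lim} shows that $u^*$ and $u_*$ are a viscosity subsolution and supersolution of \eqref{TF_HJ}, respectively. Once one verifies that both attain the correct initial data $u_0$ and lie in $\BUC(\mathbb{R}^N \times [0, T])$, Proposition \ref{Comparison_principle} applied to \eqref{TF_HJ} will give $u^* \leq u_*$; the resulting equality $u^* = u_*$ will produce a $\BUC$ viscosity solution of \eqref{TF_HJ}, which by uniqueness must coincide with $\overline{u}$.

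To identify the initial trace, I would use the uniform-in-$\varepsilon$ bound $|u^\varepsilon(x, t) - u_0(x)| \leq M t^\alpha$ from Proposition \ref{TF_uni_time_regularity}; passing to the half-relaxed limits at $t = 0$ and invoking continuity of $u_0$ yields $u^*(x, 0) = u_*(x, 0) = u_0(x)$. For the $\BUC$ property, I would combine the uniform bound of Proposition \ref{TF_uni_bdd} with the uniform equicontinuity furnished by Propositions \ref{TF_uni_time_regularity} and \ref{TF_uni_space_regularity}; the family $\{u^\varepsilon\}_{\varepsilon > 0}$ is then equicontinuous with a common modulus, which is inherited by both $u^*$ and $u_*$.

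The remaining point is to verify the hypotheses of Proposition \ref{Comparison_principle} for \eqref{TF_HJ}. Periodicity of $\overline{H}$ is automatic since $\overline{H}$ does not depend on $x$, and coercivity follows from \eqref{TF_CP}: testing the viscosity subsolution inequality with the constant function $v(y_0, p)$ at a maximum point $y_0$ of $v(\cdot, p)$ gives $\overline{H}(p) \geq H(y_0, p) \geq \inf_y H(y, p)$, and (A1) then forces $\overline{H}(p) \to \infty$ as $|p| \to \infty$. With these ingredients in place, the comparison principle closes the argument. I do not expect any serious obstacle here: the work is essentially bookkeeping, propagating the uniform regularity estimates of Section 3 to the relaxed limits and checking that $\overline{H}$ fits into the framework of Proposition \ref{Comparison_principle}.
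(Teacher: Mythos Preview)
Your approach is correct and matches the paper's: both use $u_* \leq u^*$ by definition together with the comparison principle for \eqref{TF_HJ} to obtain $u^* \leq u_*$. The paper's proof is in fact much terser and omits the verifications (initial data, $\BUC$, coercivity of $\overline{H}$) that you carefully supply.
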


\begin{proof}
By definition of half relaxed limit, we have $u_* \leq u^*$ in $\mathbb{R}^N \times [0, T]$.
On the other hand, by the comparison principle for \eqref{TF_HJ}, we have $u^* \leq u_*$ in $\mathbb{R}^N \times [0, T]$, which implies the conclusion.
\end{proof}

Theorem \ref{TF_conv_Main_Theorem_0} is a straightforward result of Lemmas \ref{TF_half_relaxed_lim} and \ref{TF_half_relaxed_uni}

\section{Rate of convergence}

Let us consider the discounted approximation for cell problem \eqref{TF_CP}:
\begin{equation} \label{TF_ACP_2}
\lambda v^\lambda + H(y, P + Dv^\lambda) = 0 \quad \mbox{in} \, \, \mathbb{R}^N.
\end{equation}
We recall some results of $v^\lambda$, and refer to \cite[Lemma 2.3]{CDI} and \cite[Lemma 4.38]{T} for proofs.

\begin{Lem} \label{TF_ACP_Property}
\begin{enumerate}
\item [(a)] There exists a constant $C > 0$ independent of $\lambda > 0$ such that
\begin{equation*}
\lambda |v^\lambda(y, p) - v^\lambda(y, q)| \leq C|p - q| \quad \mbox{for} \, \, y, p, q \in \mathbb{R}^N.
\end{equation*}
In particular, $|\overline{H}(p) - \overline{H}(q)| \leq C|p - q|$ for $p, q \in \mathbb{R}^N$.

\item [(b)] For any $p \in \mathbb{R}^N$, there exists a constant $C > 0$ independent of $\lambda > 0$ and $p$ such that
\begin{equation*}
|\lambda v^\lambda(y, p) + \overline{H}(p)| \leq C (1 + |p|) \lambda \quad \mbox{for} \, \, y \in \mathbb{R}^N.
\end{equation*}
\end{enumerate}
\end{Lem}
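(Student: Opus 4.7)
The plan is to prove (a) and (b) by the standard perturbation/comparison arguments for discounted Hamilton--Jacobi equations on the torus, with the Lipschitz estimate on $\overline{H}$ recovered as a byproduct in the course of (a).

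\textbf{Proof of (a).}
Both $v^{\lambda}(\cdot,p)$ and $v^{\lambda}(\cdot,q)$ are $\mathbb{Z}^{N}$-periodic viscosity solutions of the discounted cell problem, so it suffices to work on $\mathbb{T}^{N}$, where the standard comparison principle for $\lambda v + H(y, P + Dv) = 0$ applies. Let $C_{H}$ denote the constant from (A4). The plan is to verify that
\begin{equation*}
\tilde{w}(y) := v^{\lambda}(y,q) + \frac{C_{H}|p-q|}{\lambda}
\end{equation*}
is a viscosity supersolution of $\lambda w + H(y, p+Dw) = 0$. Formally,
\begin{equation*}
\lambda \tilde{w} + H(y, p + D\tilde{w}) = \lambda v^{\lambda}(y,q) + C_{H}|p-q| + H(y, p + Dv^{\lambda}(y,q)) \ge 0
\end{equation*}
by (A4) and the equation satisfied by $v^{\lambda}(\cdot,q)$; the same computation goes through test-function-wise for viscosity solutions. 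Comparison then gives $v^{\lambda}(y,p) \le v^{\lambda}(y,q) + C_{H}|p-q|/\lambda$, and the symmetric argument yields (a).

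\textbf{Proof of (b).}
I would first record the a priori bound $|\overline{H}(p)| \le C(1+|p|)$: using $\pm \max_{y}|H(y,p)|$ as constant sub/supersolutions in the cell problem together with the comparison principle, one gets $\min_{y}H(y,p) \le \overline{H}(p) \le \max_{y}H(y,p)$, and by (A4) both ends are bounded by $C(1+|p|)$. Then, letting $v=v(\cdot,p)$ be a Lipschitz viscosity solution of \eqref{TF_CP} normalized so that $\min_{y} v = 0$, the coercivity (A1) applied pointwise to $H(y, p+Dv)=\overline{H}(p)$ forces $|p+Dv|$ to lie in a bounded set whose size is controlled by $|\overline{H}(p)|\le C(1+|p|)$; hence $\|Dv\|_{\infty} \le C(1+|p|)$ and therefore $\|v\|_{\infty} \le C(1+|p|)$ on $\mathbb{T}^{N}$. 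Now set
\begin{equation*}
w(y) := v(y) - \frac{\overline{H}(p)}{\lambda}.
\end{equation*}
A direct substitution shows that $w$ solves $\lambda w + H(y, p+Dw) = \lambda v(y)$, so $w$ is simultaneously a sub- and supersolution of the $v^{\lambda}$-equation up to the source $\pm \lambda \|v\|_{\infty}$. Comparison yields $|v^{\lambda}(y,p) - w(y)| \le \|v\|_{\infty}$, and multiplying by $\lambda$ and using the triangle inequality together with $|v(y)| \le \|v\|_{\infty}$ gives
\begin{equation*}
|\lambda v^{\lambda}(y,p) + \overline{H}(p)| \le 2\lambda\|v\|_{\infty} \le C(1+|p|)\lambda,
\end{equation*}
which is (b).

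\textbf{Lipschitz continuity of $\overline{H}$.}
Once (b) is established, $\lambda v^{\lambda}(y,p) \to -\overline{H}(p)$ uniformly in $y$ as $\lambda \to 0$. Combining this with (a), which reads $|\lambda v^{\lambda}(y,p) - \lambda v^{\lambda}(y,q)| \le C|p-q|$, and sending $\lambda \to 0$ gives $|\overline{H}(p) - \overline{H}(q)| \le C|p-q|$.

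\textbf{Main obstacle.}
The only nontrivial point is the quantitative Lipschitz bound $\|Dv(\cdot,p)\|_{\infty} \le C(1+|p|)$ for the cell problem solution: it requires combining coercivity (A1) with the a priori estimate $|\overline{H}(p)|\le C(1+|p|)$, and, since $v$ is only known to be Lipschitz rather than $C^{1}$, the argument has to be phrased either through sup/inf-convolutions or via a viscosity-sense interpretation of $|p+Dv|$ bounded on the whole torus. All other steps are routine applications of the comparison principle on $\mathbb{T}^{N}$.
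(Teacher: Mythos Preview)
The paper does not actually prove this lemma; it only cites \cite[Lemma 2.3]{CDI} and \cite[Lemma 4.38]{T}. Your argument is the standard one from those references and is essentially correct: the comparison-based perturbation in (a) and the use of a normalized corrector $v$ to sandwich $v^{\lambda}$ in (b) are exactly the classical route.

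One small point worth tightening: in your proof of (b), the step ``coercivity (A1) forces $|p+Dv|$ to lie in a bounded set whose size is controlled by $|\overline{H}(p)|\le C(1+|p|)$; hence $\|Dv\|_{\infty}\le C(1+|p|)$'' silently upgrades qualitative coercivity to a linear growth inverse. Under (A1) alone you only get $|p+Dv|\le R(|\overline{H}(p)|)$ for some modulus $R$, which need not be affine. The linear bound $\|Dv\|_{\infty}\le C(1+|p|)$ genuinely uses (A5), which is indeed in force in Section~5 where this lemma is invoked (Theorem~\ref{TF_conv_Main_Theorem} assumes (A1)--(A5)). You flag this as the ``main obstacle,'' so you are aware of the subtlety; just make the dependence on (A5) explicit rather than attributing it to (A1).
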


The next lemma is a key ingredient to prove Theorem \ref{TF_conv_Main_Theorem}.
\begin{Lem} \label{TF_Conv_Lem}
For $\varepsilon, \lambda > 0$, let $u^\varepsilon$, $\overline{u}$ and $v^\lambda$ be the viscosity solutions of (\ref{TF_HJ_eqs}), (\ref{TF_HJ}) and (\ref{TF_ACP_2}), respectively.
Let $\Phi : \mathbb{R}^{2N}\times [0, T] \mapsto \mathbb{R}$ be the function defined by
\begin{equation*}
\Phi(x, y, t) \coloneqq u^\varepsilon(x, t) - \overline{u}(y, t) - \varepsilon v^{\lambda} \left( \frac{x}{\varepsilon}, \frac{x - y}{\varepsilon^\beta} \right) - \frac{|x-y|^2}{2\varepsilon^\beta} - \frac{\delta|y|^2}{2} - \frac{Rt^\alpha}{\Gamma(1 + \alpha)},
\end{equation*}
where $\lambda = \varepsilon^\theta$ and $\theta, \beta, \delta \in (0, 1),$ and $R > 0$.
Assume that $\Phi(x, y, t)$ attains a maximum at $(\hat{x}, \hat{y}, \hat{t}) \in \mathbb{R}^{2N} \times (0, T]$ over $\mathbb{R}^{2N} \times[0, T] $ and $\delta^{\frac{1}{2}} < \lambda$.
There exists $\varepsilon_0 \in (0, 1)$ such that for each $\nu \in (0, 1)$, there exists $C > 0$ which is independent of $\varepsilon$ such that
\begin{equation*}
J[u^\varepsilon](\hat{x}, \hat{t}) - J[\overline{u}](\hat{y}, \hat{t})
+ K_{(0, \hat{t})}[u^\varepsilon](\hat{x}, \hat{t}) - K_{(0, \hat{t})}[\overline{u}](\hat{y}, \hat{t})
\leq C (\varepsilon^{\theta - \frac{\beta(1-\nu)}{2-\nu}} + \varepsilon^{1-\theta-\beta})
\end{equation*}
for all $\varepsilon \in (0, \varepsilon_0)$.
\end{Lem}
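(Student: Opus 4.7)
The plan is to combine the viscosity subsolution inequality for $u^\varepsilon$ at $(\hat{x}, \hat{t})$ with the supersolution inequality for $\overline{u}$ at $(\hat{y}, \hat{t})$, and then use the discounted cell equation for $v^\lambda$ to cancel the leading Hamiltonian contributions. Set $P := (\hat{x}-\hat{y})/\varepsilon^\beta$ and $\phi^\varepsilon_{\hat{y}}(x) := \varepsilon v^\lambda(x/\varepsilon,(x-\hat{y})/\varepsilon^\beta) + |x-\hat{y}|^2/(2\varepsilon^\beta)$; the spatial gradients that would appear are $p = P + D_y v^\lambda(\hat{x}/\varepsilon,P) + \varepsilon^{1-\beta}D_p v^\lambda(\hat{x}/\varepsilon,P)$ (from the $x$-piece) and $q = P + \varepsilon^{1-\beta}D_p v^\lambda(\hat{x}/\varepsilon,P) - \delta\hat{y}$ (from the $y$-piece).

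The principal obstacle is that the time part of the test function extracted from the $\Phi$-maximum is $\overline{u}(\hat{y},t) + Rt^\alpha/\Gamma(1+\alpha)$ on one side and $u^\varepsilon(\hat{x},t) - Rt^\alpha/\Gamma(1+\alpha)$ on the other, and these are only $\alpha$-H\"older in $t$, so no $C^1$ function can touch them from above (resp.\ below) at $\hat{t}$ while reproducing the correct gradient; Proposition \ref{vissol_equivalent} cannot be invoked directly. To bypass this, I would regularize in time: let $u^{\varepsilon,\sigma}$ be the sup-convolution of $u^\varepsilon$ and $\overline{u}_\sigma$ the inf-convolution of $\overline{u}$. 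By the auxiliary lemma preceding the proof of Proposition \ref{TF_uni_space_regularity}, these are Lipschitz (in fact semi-convex, resp.\ semi-concave) in $t$ and satisfy the perturbed sub/super-equations with right-hand side $\pm\eta_\sigma$ ($\eta_\sigma \to 0$) on $\mathbb{R}^N\times (M\sigma^{1/4},T]$. Let $(\hat{x}^\sigma,\hat{y}^\sigma,\hat{t}^\sigma)$ be a maximizer of the regularized $\Phi^\sigma$; along a subsequence it converges to $(\hat{x},\hat{y},\hat{t})$. Semi-convexity/semi-concavity supplies sub- and super-differentials at every point, so a $C^1$ quadratic correction in $t$ added to $\phi^\varepsilon_{\hat{y}^\sigma}$ produces valid $C^1$ test functions for which $u^{\varepsilon,\sigma}-\varphi$ has a local maximum at $(\hat{x}^\sigma,\hat{t}^\sigma)$ and $\overline{u}_\sigma-\psi$ a local minimum at $(\hat{y}^\sigma,\hat{t}^\sigma)$; Proposition \ref{vissol_equivalent}(c) then applies.

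Subtracting the two viscosity inequalities and using the cell equation $\lambda v^\lambda(y,P) + H(y, P+D_yv^\lambda(y,P)) = 0$, (A4), the bound $|D_p v^\lambda|\leq C/\lambda$ (Lemma \ref{TF_ACP_Property}(a)), and $|\lambda v^\lambda + \overline{H}(P)|\leq C(1+|P|)\lambda$ (Lemma \ref{TF_ACP_Property}(b)), one writes $H(\hat{x}^\sigma/\varepsilon,p^\sigma) = \overline{H}(P^\sigma) + O(\lambda(1+|P^\sigma|) + \varepsilon^{1-\beta}/\lambda)$ and, by Lipschitzness of $\overline{H}$, $\overline{H}(q^\sigma) = \overline{H}(P^\sigma) + O(\varepsilon^{1-\beta}/\lambda + \delta|\hat{y}^\sigma|)$. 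The two copies of $\overline{H}(P^\sigma)$ cancel, leaving
\[
\partial_t^\alpha u^{\varepsilon,\sigma}(\hat{x}^\sigma,\hat{t}^\sigma) - \partial_t^\alpha \overline{u}_\sigma(\hat{y}^\sigma,\hat{t}^\sigma) \leq C\bigl(\lambda(1+|P^\sigma|) + \varepsilon^{1-\beta}/\lambda + \delta|\hat{y}^\sigma|\bigr) + 2\eta_\sigma.
\]
Comparing $\Phi^\sigma(\hat{x}^\sigma,\hat{y}^\sigma,\hat{t}^\sigma) \geq \Phi^\sigma(\hat{y}^\sigma,\hat{y}^\sigma,\hat{t}^\sigma)$ and using the $\nu$-H\"older estimate of Proposition \ref{TF_uni_space_regularity} (preserved under sup-convolution by Lemma \ref{sup_conv_property}(e)), together with $\|v^\lambda\|_\infty \leq C/\lambda$ and the $p$-Lipschitz bound on $v^\lambda$, yields $|\hat{x}^\sigma-\hat{y}^\sigma|\leq C\varepsilon^{\beta/(2-\nu)}$, whence $\lambda|P^\sigma|\leq C\varepsilon^{\theta-\beta(1-\nu)/(2-\nu)}$. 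The penalty $\delta|y|^2/2$ forces $|\hat{y}^\sigma|\leq C\delta^{-1/2}$, so $\delta|\hat{y}^\sigma|\leq C\delta^{1/2} < C\lambda$ under the hypothesis $\delta^{1/2}<\lambda$ and is absorbed into the $\lambda(1+|P^\sigma|)$ term; since $\lambda = \varepsilon^\theta$, $\varepsilon^{1-\beta}/\lambda = \varepsilon^{1-\theta-\beta}$, giving the second term of the claim.

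Finally, sending $\sigma \to 0$ transfers the estimate to $u^\varepsilon$ and $\overline{u}$; this passage to the limit is the most delicate step and the genuine technical obstacle. One needs $\partial_t^\alpha u^{\varepsilon,\sigma}(\hat{x}^\sigma,\hat{t}^\sigma) \to \partial_t^\alpha u^\varepsilon(\hat{x},\hat{t})$ (and likewise for $\overline{u}$), for which the uniform $\alpha$-H\"older estimate of Proposition \ref{TF_uni_time_regularity}, together with the Lipschitz-in-$t$ bound on the sup-convolution, provides an integrable majorant for the Caputo integrands $(u^{\varepsilon,\sigma}(\hat{x}^\sigma,\hat{t}^\sigma)-u^{\varepsilon,\sigma}(\hat{x}^\sigma,\xi))/|\hat{t}^\sigma-\xi|^{\alpha+1}$ uniformly in $\sigma$, permitting a dominated convergence argument that closes the proof.
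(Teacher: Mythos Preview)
Your proposal has a genuine gap in the treatment of $v^\lambda$. You write $\phi^\varepsilon_{\hat{y}}(x)=\varepsilon v^\lambda(x/\varepsilon,(x-\hat{y})/\varepsilon^\beta)+|x-\hat{y}|^2/(2\varepsilon^\beta)$ and then read off gradients $D_y v^\lambda$, $D_p v^\lambda$ as though $v^\lambda$ were differentiable. But $v^\lambda$ is only Lipschitz (it is merely a viscosity solution of \eqref{TF_ACP_2}); in general $\phi^\varepsilon_{\hat{y}}\notin C^1(\mathbb{R}^N)$, so it cannot serve as the spatial part of a test function in Definition~\ref{Def_vis_sol} or Proposition~\ref{vissol_equivalent}, and the pointwise identity $\lambda v^\lambda+H(y,P+D_y v^\lambda)=0$ is unavailable. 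Your time regularization via sup/inf-convolution addresses only the \emph{time} non-smoothness and does nothing about this spatial obstruction. The paper's proof deals with this by introducing two extra spatial doubling variables $\xi$ and $z$ in an auxiliary function $\Psi$, so that $v^\lambda$ appears only as $v^\lambda(\xi,z/\varepsilon^\beta)$, decoupled from $x$ and $y$; the subsolution test for $u^\varepsilon$ then uses a genuinely $C^1$ function of $(x,t)$, while the viscosity \emph{supersolution} property of $v^\lambda$ is invoked separately in the $\xi$-variable. Estimate \eqref{TF_Conv_xyz} then replaces your formal $\varepsilon^{1-\beta}D_p v^\lambda$ term.

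There is also a second gap in your limit step $\sigma\to 0$. You claim an integrable majorant for $(u^{\varepsilon,\sigma}(\hat{x}^\sigma,\hat{t}^\sigma)-u^{\varepsilon,\sigma}(\hat{x}^\sigma,\hat{t}^\sigma-\tau))/\tau^{\alpha+1}$ uniform in $\sigma$, but the $\alpha$-H\"older bound yields only $C/\tau$, which is not integrable near $\tau=0$, while the Lipschitz bound $C/(\sigma\tau^\alpha)$ blows up as $\sigma\to 0$; no uniform dominant exists. The paper avoids this by doubling the time variable as well (the $|t-s|^2/(2\gamma)$ penalty in $\Psi$) and then, in STEP~3, splitting the $K$-integrals at a fixed level $r>0$: the tail is handled by Fatou's lemma, and the singular piece $I_2$ is controlled \emph{from below} using the inequality $\Psi(\ldots,t_\gamma,s_\gamma)\geq\Psi(\ldots,t_\gamma-\tau,s_\gamma-\tau)$, which exploits the simultaneous shift available precisely because of the time doubling. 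Your single-time sup/inf-convolution setup does not furnish an analogous inequality, so this lower bound is unavailable to you.
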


\begin{proof}
STEP 1. 
We claim that if $1 - \theta - \beta > 0$, then there exists $C > 0$ such that 
\begin{equation} \label{TF_Conv_y}
|\hat{y}| \leq C \delta^{-\frac{1}{2}}.
\end{equation}
Additionally, if $\delta < \frac{1}{2}$, then
\begin{equation} \label{TF_Conv_diff_xy_pre}
|\hat{x} - \hat{y}| \leq C \varepsilon^{\frac{\beta}{2 - \nu}}.
\end{equation}

Noting that $\Phi(0, 0, \hat{t}) \leq \Phi(\hat{x}, \hat{y}, \hat{t})$, i.e.,
\begin{equation*}
u^\varepsilon(0, \hat{t}) - \overline{u}(0, \hat{t}) - \varepsilon v^{\lambda}(0, 0)
\leq
u^\varepsilon(\hat{x}, \hat{t}) - \overline{u}(\hat{y}, \hat{t}) - \varepsilon v^{\lambda} \left( \frac{\hat{x}}{\varepsilon}, \frac{\hat{x} - \hat{y}}{\varepsilon^\beta} \right) - \frac{|\hat{x}-\hat{y}|^2}{2\varepsilon^\beta} - \frac{\delta |\hat{y}|^2}{2},
\end{equation*}
which implies
\begin{align*}
\frac{\delta |\hat{y}|^2}{2} + \frac{|\hat{x}-\hat{y}|^2}{2\varepsilon^\beta}
&\leq
u^\varepsilon(\hat{x}, \hat{t}) - u^\varepsilon(0, \hat{t})
- (\overline{u}(\hat{y}, \hat{t}) - \overline{u}(0, \hat{t}) ) \\
& \quad - \varepsilon \left\{ v^{\lambda} \left( \frac{\hat{x}}{\varepsilon}, \frac{\hat{x} - \hat{y}}{\varepsilon^\beta} \right) - v^{\lambda}(0, 0) \right\}.
\end{align*}
Noting that, by Lemma \ref{TF_ACP_Property},
\begin{align*}
\left| v^{\lambda} \left( \frac{\hat{x}}{\varepsilon}, \frac{\hat{x} - \hat{y}}{\varepsilon^\beta} \right) - v^{\lambda}(0, 0) \right|
&\leq
\left| v^{\lambda} \left( \frac{\hat{x}}{\varepsilon}, \frac{\hat{x} - \hat{y}}{\varepsilon^\beta} \right) - v^{\lambda} \left( \frac{\hat{x}}{\varepsilon}, 0 \right) \right|
+ \left| v^{\lambda} \left( \frac{\hat{x}}{\varepsilon}, 0 \right) \right|
+ |v^{\lambda}(0, 0)| \\
&\leq
\frac{C}{\lambda} \frac{|\hat{x} - \hat{y}|}{\varepsilon^\beta} + \frac{2C}{\lambda} (|\overline{H}(0)| + \lambda),
\end{align*}
we obtain
\begin{align*}
\frac{\delta |\hat{y}|^2}{2} + \frac{|\hat{x} - \hat{y}|^2}{2\varepsilon^\beta}
&\leq
2 \| u^\varepsilon \|_\infty + 2 \| \overline{u} \|_\infty + \varepsilon \left| v^{\lambda} \left( \frac{\hat{x}}{\varepsilon}, \frac{\hat{x} - \hat{y}}{\varepsilon^\beta} \right) - v^{\lambda}(0, 0) \right| \\
&\leq C + \frac{C \varepsilon}{\lambda} \left( \frac{|\hat{x} - \hat{y}|}{\varepsilon^\beta} + 1 \right) = C + \frac{C \varepsilon^{1 - \frac{\beta}{2}} |\hat{x} - \hat{y}|}{\lambda \varepsilon^{\frac{\beta}{2}}} + \frac{C \varepsilon}{\lambda}.
\end{align*}
With $\lambda = \varepsilon^\theta$, by using the Young inequality, we obtain
\begin{equation*}
\frac{\delta |\hat{y}|^2}{2} + \frac{|\hat{x}-\hat{y}|^2}{2\varepsilon^\beta}
\leq
C + C \varepsilon^{2(1 - \theta - \frac{\beta}{2})} + \frac{|\hat{x}-\hat{y}|^2}{2\varepsilon^\beta} + C \varepsilon^{1-\theta},
\end{equation*}
which implies \eqref{TF_Conv_y}.

Next, by $\Phi(\hat{x}, \hat{x}, \hat{t}) \leq \Phi(\hat{x}, \hat{y}, \hat{t})$, i.e.,
\begin{equation*}
- \overline{u}(\hat{x}, \hat{t}) - \varepsilon v^{\lambda}\left(\frac{\hat{x}}{\varepsilon}, 0\right) - \frac{\delta |\hat{x}|^2}{2}
\leq
- \overline{u}(\hat{y}, \hat{t}) - \varepsilon v^{\lambda}\left(\frac{\hat{x}}{\varepsilon}, \frac{\hat{x} - \hat{y}}{\varepsilon^\beta}\right) - \frac{|\hat{x} - \hat{y}|^2}{2 \varepsilon^\beta} - \frac{\delta |\hat{y}|^2}{2},
\end{equation*}
which implies, by Proposition \ref{TF_uni_space_regularity}, Lemma \ref{TF_ACP_Property}, for $\nu \in (0, 1)$, we obtain
\begin{align}
\frac{|\hat{x} - \hat{y}|^2}{2 \varepsilon^\beta} &\leq \overline{u}(\hat{x}, \hat{t}) - \overline{u}(\hat{y}, \hat{t}) + \varepsilon \left\{ v^{\lambda}\left(\frac{\hat{x}}{\varepsilon}, 0\right) - v^{\lambda}\left(\frac{\hat{x}}{\varepsilon}, \frac{\hat{x} - \hat{y}}{\varepsilon^\beta}\right) \right\} + \delta \frac{|\hat{x}|^2 - |\hat{y}|^2}{2} \notag \\
&\leq C|\hat{x} - \hat{y}|^\nu + \varepsilon \frac{C}{\lambda} \frac{|\hat{x} - \hat{y}|}{\varepsilon^\beta} + \frac{\delta}{2} (|\hat{x} - \hat{y}|^2 + 2 |\hat{x} - \hat{y}| |\hat{y}|).  \label{Reglarity_modify}
\end{align}
It is suffices to consider case $|\hat{x} - \hat{y}| < 1$ 
if $\ep$ is small enough. 
Combining \eqref{Reglarity_modify} with \eqref{TF_Conv_y}, we have
\begin{align*}
(1 - \delta \varepsilon^\beta) \frac{|\hat{x} - \hat{y}|^2}{2 \varepsilon^\beta}
&\leq
C ( 1 + \varepsilon^{1 - \theta - \beta}|\hat{x} - \hat{y}|^{1-\nu} + \delta^{\frac{1}{2}}|\hat{x} - \hat{y}|^{1-\nu} ) |\hat{x} - \hat{y}|^\nu \\
&\leq
C ( 1 + \varepsilon^{1 - \theta - \beta} + \delta^{\frac{1}{2}}) |\hat{x} - \hat{y}|^\nu.
\end{align*}
Since $\delta < \frac{1}{2}$ and $0 < 1 - \theta - \beta$, we get (\ref{TF_Conv_diff_xy_pre}).

\medskip
STEP 2. 
For $\gamma > 0$, we consider the auxiliary function $\Psi:\mathbb{R}^{4N}\times [0, T]^2 \mapsto \mathbb{R}$ defined by
\begin{equation*}
\begin{split}
\Psi(x, y, z, \xi, t, s) \coloneqq \, & u^\varepsilon(x, t) - \overline{u}(y, s) - \varepsilon v^\lambda \left( \xi, \frac{z}{\varepsilon^\beta} \right) \\
& - \frac{|x-y|^2}{2\varepsilon^\beta} - \frac{\delta |y|^2}{2} - \frac{R t^\alpha}{\Gamma(1 + \alpha)} \\
& - \frac{|x - \varepsilon \xi|^2 + |x-y-z|^2 + |t - s|^2}{2\gamma} \\
& - \frac{|x - \hat{x}|^2 + |y - \hat{y}|^2 + |t - \hat{t}|^2}{2}.
\end{split}
\end{equation*}
Since $\Psi(x, y, z, \xi, t, s) \to -\infty$ as $|x|, |y|, |z|, |\xi| \to \infty$ for any $(t, s) \in [0, T]^2$ and $\Psi$ is bounded from above, $\Psi$ attains a maximum at a point $(x_\gamma, y_\gamma, z_\gamma, \xi_\gamma, t_\gamma, s_\gamma) \in \mathbb{R}^{4N}\times [0, T]^2$.
By the standard argument of the theory of viscosity solutions, we obtain
\begin{equation*}
(x_\gamma, y_\gamma, z_\gamma, \xi_\gamma, t_\gamma, s_\gamma) \to \left( \hat{x}, \hat{y}, \hat{x} - \hat{y}, \frac{\hat{x}}{\varepsilon}, \hat{t}, \hat{t} \right) \quad \mbox{as} \,\, \gamma \to 0.
\end{equation*}
If necessary, taking $\gamma$ sufficiently small, we can assume that $t_\gamma, s_\gamma > 0$. 
Our goal of STEP 2 is to obtain
\begin{equation} \label{TF_Conv_STEP2}
\begin{split}
J[u^\varepsilon](x_\gamma, t_\gamma) - J[\overline{u}](y_\gamma, s_\gamma)
+ K_{(0, t_\gamma)}[u^\varepsilon] &(x_\gamma, t_\gamma) - K_{(0, s_\gamma)}[\overline{u}](y_\gamma, s_\gamma) \\
&\leq C (|z_\gamma|\varepsilon^{\theta-\beta} + \varepsilon^\theta + \varepsilon^{1-\theta-\beta}) + E_\gamma,
\end{split}
\end{equation}
where $E_\gamma$ is a constant which satisfies $E_\gamma \to 0$ as $\gamma \to 0$, and will be defined later.

\medskip
STEP 2-1. 
We claim that there exists $C > 0$ such that
\begin{equation} \label{TF_conv_HJepsACP}
J[u^\varepsilon](x_\gamma, t_\gamma) + K_{(0, t_\gamma)}[u^\varepsilon](x_\gamma, t_\gamma) + \overline{H} \left( \frac{z_\gamma}{\varepsilon^\beta} \right) \leq C ( |z_\gamma|\varepsilon^{\theta-\beta} + \varepsilon^\theta + \varepsilon^{1 - \theta - \beta}) + E_{1, \gamma},
\end{equation}
for some $E_{1, \gamma} > 0$ satisfying $E_{1, \gamma} \to 0$ as $\gamma \to 0$.
Notice that the function $(x, t) \mapsto \Psi(x, y_\gamma, z_\gamma, \xi_\gamma, t, s_\gamma)$ has a maximum at $(x_\gamma, t_\gamma)$.
Since $u^\varepsilon$ is a viscosity subsolution of (\ref{TF_HJ_eqs}), by Proposition \ref{vissol_equivalent}, we have
\begin{equation} \label{TF_conv_HJ_eps}
\begin{split}
J[u^\varepsilon](x_\gamma, t_\gamma) &+ K_{(0, t_\gamma)}[u^\varepsilon](x_\gamma, t_\gamma) \\
&+ H \left( \frac{x_\gamma}{\varepsilon}, \frac{x_\gamma - y_\gamma}{\varepsilon^\beta} + \frac{x_\gamma - \varepsilon \xi_\gamma + x_\gamma - y_\gamma - z_\gamma}{\gamma} + x_\gamma - \hat{x} \right) \leq 0.
\end{split}
\end{equation}
On the other hand, notice that the function $\xi \mapsto - \frac{1}{\varepsilon} \Psi(x_\gamma, y_\gamma, z_\gamma, \xi, t_\gamma, s_\gamma)$ has a minimum at $\xi_\gamma$.
Since $v^\lambda$ is a viscosity supersolution of $(\ref{TF_ACP_2})$, we obtain 
\begin{equation} \label{TF_conv_ACP}
\lambda v^\lambda \left( \xi_\gamma, \frac{z_\gamma}{\varepsilon^\beta} \right) + H \left( \xi_\gamma, \frac{z_\gamma}{\varepsilon^\beta} + \frac{x_\gamma - \varepsilon \xi_\gamma}{\gamma} \right) \geq 0.
\end{equation}
By $\Psi(x_\gamma, y_\gamma, x_\gamma - y_\gamma, \xi_\gamma, t_\gamma, s_\gamma) \leq \Psi(x_\gamma, y_\gamma, z_\gamma, \xi_\gamma, t_\gamma, s_\gamma)$ and Lemma \ref{TF_ACP_Property}, we see that
\begin{align*}
\frac{|x_\gamma - y_\gamma - z_\gamma|^2}{2\gamma} 
&\leq \varepsilon \left\{ v^\lambda \left( \xi_\gamma, \frac{x_\gamma - y_\gamma}{\varepsilon^\beta} \right) - v^\lambda \left( \xi_\gamma, \frac{z_\gamma}{\varepsilon^\beta} \right) \right\} \\
& \leq \varepsilon C \frac{1}{\lambda} \frac{|x_\gamma - y_\gamma - z_\gamma|}{\varepsilon^\beta} = C \varepsilon^{1 - \theta - \beta} |x_\gamma - y_\gamma - z_\gamma|,
\end{align*}
which implies
\begin{equation} \label{TF_Conv_xyz}
\frac{|x_\gamma - y_\gamma - z_\gamma|}{\gamma} \leq C \varepsilon^{1 - \theta - \beta}.
\end{equation}
Combining (\ref{TF_conv_HJ_eps}) with (\ref{TF_conv_ACP}) and \eqref{TF_Conv_xyz}, by Lemma \ref{TF_ACP_Property}, we obtain
\begin{align*}
& J[u^\varepsilon](x_\gamma, t_\gamma) + K_{(0, t_\gamma)}[u^\varepsilon](x_\gamma, t_\gamma) + \overline{H} \left( \frac{z_\gamma}{\varepsilon^\beta} \right) \\
 \leq& \, 
 \lambda v^\lambda \left(\xi_\gamma, \frac{z_\gamma}{\varepsilon^\beta}\right) + \overline{H} \left( \frac{z_\gamma}{\varepsilon^\beta} \right) 
+ H \left( \xi_\gamma, \frac{z_\gamma}{\varepsilon^\beta} + \frac{x_\gamma - \varepsilon \xi_\gamma}{\gamma} \right)\\
& - H \left( \frac{x_\gamma}{\varepsilon}, \frac{x_\gamma - y_\gamma}{\varepsilon^\beta} + \frac{x_\gamma - \varepsilon \xi_\gamma + x_\gamma - y_\gamma - z_\gamma}{\gamma} + x_\gamma - \hat{x} \right) \\
\leq& \, C \left( 1 + \left| \frac{z_\gamma}{\varepsilon^\beta} \right| \right) \lambda + C \left( \left|\frac{x_\gamma}{\varepsilon} - \xi_\gamma\right| + \frac{|x_\gamma - y_\gamma - z_\gamma|}{\varepsilon^\beta} + \frac{|x_\gamma - y_\gamma - z_\gamma|}{\gamma} + |x_\gamma - \hat{x}| \right) \\
\leq& \, C( |z_\gamma|\varepsilon^{\theta-\beta} + \varepsilon^\theta + \varepsilon^{1 - \theta - \beta}) + C \left( \left|\frac{x_\gamma}{\varepsilon} - \xi_\gamma\right| + \frac{|x_\gamma - y_\gamma - z_\gamma|}{\varepsilon^\beta} + |x_\gamma - \hat{x}| \right ) \\
\revcoloneqq& \, C( |z_\gamma|\varepsilon^{\theta-\beta} + \varepsilon^\theta + \varepsilon^{1 - \theta - \beta}) + E_{1, \gamma}.
\end{align*}

\medskip
STEP 2-2. 
Next, we claim that there exists $C > 0$ such that
\begin{equation} \label{TF_conv_HJ}
J[\overline{u}](y_\gamma, s_\gamma) + K_{(0, s_\gamma)}[\overline{u}](y_\gamma, s_\gamma) + \overline{H} \left( \frac{z_\gamma}{\varepsilon^\beta} \right) \geq - C ( \varepsilon^{1 - \theta - \beta} + \varepsilon^\theta )- E_{2, \gamma} 
\end{equation}
for some $E_{2, \gamma} > 0$ satisfying $E_{2, \gamma} \to 0$ as $\gamma \to 0$.
Notice that $(y, s) \mapsto - \Psi(x_\gamma, y, z_\gamma, \xi_\gamma, t_\gamma, s)$ has a minimum at $(y_\gamma, s_\gamma)$.
By Proposition \ref{vissol_equivalent}, we have
\begin{equation*}
J[\overline{u}](y_\gamma, s_\gamma) + K_{(0, s_\gamma)}[\overline{u}](y_\gamma, s_\gamma) + \overline{H} \left( \frac{x_\gamma - y_\gamma}{\varepsilon^\beta} - \delta y_\gamma + \frac{x_\gamma - y_\gamma - z_\gamma}{\gamma} - (y_\gamma - \hat{y}) \right) \geq 0.
\end{equation*}
Thus, by (\ref{TF_Conv_y}), (\ref{TF_Conv_xyz}) and Lemma \ref{TF_ACP_Property}, we obtain
\begin{align*}
& J[\overline{u}](y_\gamma, s_\gamma) + K_{(0, s_\gamma)}[\overline{u}](y_\gamma, s_\gamma) + \overline{H} \left(\frac{z_\gamma}{\varepsilon^\beta} \right) \\
& \geq \overline{H} \left( \frac{z_\gamma}{\varepsilon^\beta} \right) - \overline{H} \left( \frac{x_\gamma - y_\gamma}{\varepsilon^\beta} - \delta y_\gamma + \frac{x_\gamma - y_\gamma - z_\gamma}{\gamma} - (y_\gamma - \hat{y}) \right) \\
& \geq - C \left\{ \frac{|x_\gamma - y_\gamma - z_\gamma|}{\varepsilon^\beta} + \frac{|x_\gamma - y_\gamma - z_\gamma|}{\gamma} + (1 + \delta)|y_\gamma - \hat{y}| + \delta |\hat{y}| \right\} \\
& \geq - C ( \varepsilon^{1 - \theta - \beta} + \delta^{\frac{1}{2}} ) - \left\{ \frac{|x_\gamma - y_\gamma - z_\gamma|}{\varepsilon^\beta} + (1 + \delta)|y_\gamma - \hat{y}| \right\} \\
& \revcoloneqq - C (\varepsilon^{1 - \theta - \beta} + \delta^{\frac{1}{2}} ) - E_{2, \gamma}.
\end{align*}
Take $0 < \delta < \frac{1}{2}$ satisfying $\delta^{\frac{1}{2}} < \lambda = \varepsilon^\theta$ to get (\ref{TF_conv_HJ}).
Combining (\ref{TF_conv_HJepsACP}) with (\ref{TF_conv_HJ}), we get
\begin{equation*}
\begin{split}
J[u^\varepsilon](x_\gamma, t_\gamma) - J[\overline{u}](y_\gamma, s_\gamma)
+ K_{(0, t_\gamma)}[u^\varepsilon] &(x_\gamma, t_\gamma) - K_{(0, s_\gamma)}[\overline{u}](y_\gamma, s_\gamma) \\
&\leq C (|z_\gamma|\varepsilon^{\theta-\beta} + \varepsilon^\theta + \varepsilon^{1-\theta-\beta}) + E_{1, \gamma} + E_{2, \gamma}.
\end{split}
\end{equation*}
We set $E_\gamma \coloneqq E_{1, \gamma} + E_{2, \gamma}$.
This completes STEP 2.

\medskip
STEP 3. 
Finally, we claim that taking the limit infimum $\gamma \to 0$ in (\ref{TF_Conv_STEP2}) yields that
\begin{equation*}
J[u^\varepsilon](\hat{x}, \hat{t}) - J[\overline{u}](\hat{y}, \hat{t})
+ K_{(0, \hat{t})}[u^\varepsilon](\hat{x}, \hat{t}) - K_{(0, \hat{t})}[\overline{u}](\hat{y}, \hat{t})
\leq C (\varepsilon^{\theta - \frac{\beta(1-\nu)}{2-\nu}} + \varepsilon^\theta + \varepsilon^{1-\theta-\beta}).
\end{equation*}
We set
\begin{align*}
I_1 &\coloneqq J[u^\varepsilon](x_\gamma, t_\gamma) - J[\overline{u}](y_\gamma, s_\gamma) \\
&= \frac{u^\varepsilon(x_\gamma, t_\gamma) - u^\varepsilon(x_\gamma, 0)}{t_\gamma^\alpha \Gamma(1 - \alpha)} - \frac{\overline{u}(y_\gamma, s_\gamma) - \overline{u}(y_\gamma, 0)}{s_\gamma^\alpha \Gamma(1 - \alpha)}, \\
I_2 &\coloneqq \frac{\Gamma(1 - \alpha)}{\alpha} \{ K_{(0, r)}[u^\varepsilon] (x_\gamma, t_\gamma) - K_{(0, r)}[\overline{u}](y_\gamma, s_\gamma) \}\\
&= \int_0^r \frac{u^\varepsilon(x_\gamma, t_\gamma) - u^\varepsilon(x_\gamma, t_\gamma - \tau)}{\tau^{\alpha + 1}} d\tau - \int_0^r \frac{\overline{u}(y_\gamma, s_\gamma) - \overline{u}(y_\gamma, s_\gamma - \tau)}{\tau^{\alpha + 1}} d\tau, \\
I_3 &\coloneqq \frac{\Gamma(1 - \alpha)}{\alpha} \{ K_{(r, t_\gamma)}[u^\varepsilon] (x_\gamma, t_\gamma) - K_{(r, s_\gamma)}[\overline{u}](y_\gamma, s_\gamma) \} \\
&= \int_r^{t_\gamma} \frac{u^\varepsilon(x_\gamma, t_\gamma) - u^\varepsilon(x_\gamma, t_\gamma - \tau)}{\tau^{\alpha + 1}} d\tau - \int_r^{s_\gamma} \frac{\overline{u}(y_\gamma, s_\gamma) - \overline{u}(y_\gamma, s_\gamma - \tau)}{\tau^{\alpha + 1}} d\tau
\end{align*}
for $0 < r < \min\{t_\gamma, s_\gamma\}$.
Notice that $I_1 + \frac{\alpha}{\Gamma(1 - \alpha)}(I_2 + I_3) = J[u^\varepsilon](x_\gamma, t_\gamma) - J[\overline{u}](y_\gamma, s_\gamma)
+ K_{(0, t_\gamma)}[u^\varepsilon] (x_\gamma, t_\gamma) - K_{(0, s_\gamma)}[\overline{u}](y_\gamma, s_\gamma)$.

First, by the continuity of $u^\varepsilon$ and $\overline{u}$, we have
\begin{equation} \label{TF_Conv_I1}
\lim_{\gamma \to 0} I_1 = J[u^\varepsilon](\hat{x}, \hat{t}) - J[\overline{u}](\hat{y}, \hat{t}).
\end{equation}
Next, by $\Psi(x_\gamma, y_\gamma, z_\gamma, \xi_\gamma, t_\gamma, s_\gamma) \geq \Psi(x_\gamma, y_\gamma, z_\gamma, \xi_\gamma, t_\gamma - \tau, s_\gamma - \tau)$ for all $\tau \in [0, r]$, i.e.,
\begin{equation*}
\begin{split}
u^\varepsilon(x_\gamma, t_\gamma) - \overline{u}(y_\gamma, s_\gamma) - &\frac{R t_\gamma^\alpha}{\Gamma(1 + \alpha)} - \frac{|t_\gamma - \hat{t}|^2}{2} \\
&\geq
u^\varepsilon(x_\gamma, t_\gamma - \tau) - \overline{u}(y_\gamma, s_\gamma - \tau) - \frac{R (t_\gamma - \tau)^\alpha}{\Gamma(1 + \alpha)} - \frac{|t_\gamma - \tau - \hat{t}|^2}{2},
\end{split}
\end{equation*}
we have
\begin{align}
I_2  
&\geq \int_0^r \left\{ R\frac{t_\gamma^\alpha - (t_\gamma - \tau)^\alpha}{\Gamma(1 + \alpha)} + \frac{|t_\gamma - \hat{t}|^2 - |t_\gamma - \tau - \hat{t}|^2}{2} \right\} \frac{d\tau}{\tau^{\alpha + 1}} 
\nonumber\\
&\geq \int_0^r \frac{|t_\gamma - \hat{t}|^2 - |t_\gamma - \tau - \hat{t}|^2}{2} \frac{d\tau}{\tau^{\alpha + 1}} 
= \int_0^r \frac{2(t_\gamma - \hat{t})\tau - \tau^2}{2} \frac{d\tau}{\tau^{\alpha + 1}} 
\nonumber
\\
&= \frac{(t_\gamma - \hat{t}) r^{1 - \alpha}}{1 - \alpha} - \frac{r^{2 - \alpha}}{2 (2 - \alpha)} \to - \frac{r^{2 - \alpha}}{2 (2 - \alpha)} \revcoloneqq - C_r \quad \mbox{as} \,\, \gamma \to 0,
 \label{TF_Conv_I2}
 \end{align}

Finally, we see that
\begin{align*}
I_3 
&= \int_r^T \left( \frac{u^\varepsilon(x_\gamma, t_\gamma) - u^\varepsilon(x_\gamma, t_\gamma - \tau)}{\tau^{\alpha + 1}} \chi_{(r, t_\gamma)}(\tau) - \frac{\overline{u}(y_\gamma, s_\gamma) - \overline{u}(y_\gamma, s_\gamma - \tau)}{\tau^{\alpha + 1}} \chi_{(r, s_\gamma)}(\tau) \right) d\tau,
\end{align*}
where $\chi_I$ is the characteristic function. 
Notice that by Proposition \ref{TF_uni_time_regularity}
\begin{equation*}
\begin{split}
\frac{u^\varepsilon(x_\gamma, t_\gamma) - u^\varepsilon(x_\gamma, t_\gamma - \tau)}{\tau^{\alpha + 1}} \chi_{(r, t_\gamma)}(\tau) - \frac{\overline{u}(y_\gamma, s_\gamma) - \overline{u}(y_\gamma, s_\gamma - \tau)}{\tau^{\alpha + 1}} \chi_{(r, s_\gamma)}(\tau) \\
\geq
- \frac{2C}{\tau} \chi_{(r, T)}(\tau).
\end{split}
\end{equation*}
Since the right-hand side is integrable on $[0, T]$, by Fatou's lemma, we obtain
\begin{equation} \label{TF_Conv_I3}
\begin{split}
\liminf_{\gamma \to 0} I_3 &\geq \int_r^{\hat{t}} \frac{u^\varepsilon(\hat{x}, \hat{t}) - u^\varepsilon(\hat{x}, \hat{t} - \tau)}{\tau^{\alpha + 1}} d\tau - \int_r^{\hat{t}} \frac{\overline{u}(\hat{y}, \hat{t}) - \overline{u}(\hat{y}, \hat{t} - \tau)}{\tau^{\alpha + 1}} d\tau \\
&= \frac{\Gamma(1 - \alpha)}{\alpha} ( K_{(r, \hat{t})}[u^\varepsilon](\hat{x}, \hat{t}) - K_{(r, \hat{t})}[\overline{u}](\hat{y}, \hat{t}) ).
\end{split}
\end{equation}

Combining (\ref{TF_Conv_I1}), (\ref{TF_Conv_I2}) with (\ref{TF_Conv_I3}), and sending $\gamma \to 0$ in (\ref{TF_Conv_STEP2}), by \eqref{TF_Conv_diff_xy_pre}
 we obtain 
\begin{align*}
&J[u^\varepsilon](\hat{x}, \hat{t}) - J[\overline{u}](\hat{y}, \hat{t})
- \frac{\alpha C_r}{\Gamma(1 - \alpha)}
+ K_{(r, \hat{t})}[u^\varepsilon](\hat{x}, \hat{t}) - K_{(r, \hat{t})}[\overline{u}](\hat{y}, \hat{t}) \\
&\leq C (|\hat{x} - \hat{y}|\varepsilon^{\theta - \beta} + \varepsilon^\theta + \varepsilon^{1-\theta-\beta})
\le C(\ep^{\theta-\frac{\beta(1-\nu)}{2-\nu}}+\ep^\theta+\ep^{1-\theta-\beta}).
\end{align*}
Note that as in the proof of \eqref{eq:bdd} in Proposition \ref{vissol_equivalent}, we can prove 
\[
K_{(r, \hat{t})}[u^\varepsilon](\hat{x}, \hat{t}) - K_{(r, \hat{t})}[\overline{u}](\hat{y}, \hat{t})
\ge -C
\]
for some $C>0$ independent of $r$. Since $C_r \to 0$ as $r \to 0$ by \eqref{TF_Conv_I2}, 
we obtain the desired result.
\end{proof}

\begin{proof} [Proof of Theorem {\rm\ref{TF_conv_Main_Theorem}}.]
Let $v^\lambda$ be the viscosity solution of (\ref{TF_ACP_2}).
We consider the auxiliary function $\Phi : \mathbb{R}^{2N}\times [0, T] \mapsto \mathbb{R}$ defined by
\begin{equation*}
\Phi(x, y, t) \coloneqq u^\varepsilon(x, t) - \overline{u}(y, t) - \varepsilon v^{\lambda} \left( \frac{x}{\varepsilon}, \frac{x - y}{\varepsilon^\beta} \right) - \frac{|x-y|^2}{2\varepsilon^\beta} - \frac{\delta |y|^2}{2} - \frac{R t^\alpha}{\Gamma(1 + \alpha)},
\end{equation*}
where $\lambda = \varepsilon^\theta, \theta, \beta \in (0, 1), 0 < \delta < \lambda^2$ and $R > 0$.
Noting that $\Phi$ is a bounded from above and $\Phi(x, y, t) \to -\infty$ as $|x|, |y| \to \infty$ for any $t \in [0, T]$, 
we see that 
$\Phi$ attains a maximum at $(\hat{x}, \hat{y}, \hat{t}) \in \mathbb{R}^{2N}\times [0, T]$.

\medskip
First, we set $R:= R^\prime (\varepsilon^{\theta - \frac{\beta(1-\nu)}{2-\nu}}+ \varepsilon^{1-\theta-\beta})$.
We claim that if $R^\prime$ is sufficiently large, then $\hat{t} = 0$.
Suppose that $\hat{t} > 0$.
By Lemma \ref{TF_Conv_Lem}, we have
\begin{equation} \label{TF_Conv_Lem_ineq}
J[u^\varepsilon](\hat{x}, \hat{t}) - J[\overline{u}](\hat{y}, \hat{t})
+ K_{(0, \hat{t})}[u^\varepsilon](\hat{x}, \hat{t}) - K_{(0, \hat{t})}[\overline{u}](\hat{y}, \hat{t})
\leq C (\varepsilon^{\theta - \frac{\beta(1-\nu)}{2-\nu}}+ \varepsilon^{1-\theta-\beta}).
\end{equation}
By $\Phi(\hat{x}, \hat{y}, 0) \leq \Phi(\hat{x}, \hat{y}, \hat{t})$ and $\Phi(\hat{x}, \hat{y}, \hat{t} - \tau) \leq \Phi(\hat{x}, \hat{y}, \hat{t})$ for all $\tau \in [0, \hat{t}]$, 
we obtain
\begin{align*}
&J[u^\varepsilon](\hat{x}, \hat{t}) - J[\overline{u}](\hat{y}, \hat{t})\geq \frac{R}{\Gamma(1 - \alpha) \Gamma(1 + \alpha)}, \\
& K_{(0, \hat{t})}[u^\varepsilon](\hat{x}, \hat{t}) - K_{(0, \hat{t})}[\overline{u}](\hat{y}, \hat{t})
\geq \frac{\alpha R}{\Gamma(1 - \alpha) \Gamma(1 + \alpha)} \int_0^{\hat{t}} \frac{\hat{t}^\alpha - (\hat{t} - \tau)^\alpha}{\tau^{\alpha + 1}} d\tau \geq 0.
\end{align*}
Combining two inequalities above with \eqref{TF_Conv_Lem_ineq}, we have
\begin{equation*}
R \leq C \Gamma(1 - \alpha) \Gamma(1 + \alpha) (\varepsilon^{\theta - \frac{\beta(1-\nu)}{2-\nu}}+ \varepsilon^{1-\theta-\beta}).
\end{equation*}
Thus, we set $R^\prime > C \Gamma(1 - \alpha) \Gamma(1 + \alpha)$, which implies a contradiction.

By a similar argument to the proof of Lemma \ref{TF_Conv_Lem}, we have 
$|\hat{x} - \hat{y}| \leq C \varepsilon^\beta$ 
if $\delta < \frac{1}{2}$ and $0 < \theta < 1 - \beta$. 

Next, by $\Phi(x, x, t) \leq \Phi(\hat{x}, \hat{y}, \hat{t}) = \Phi(\hat{x}, \hat{y}, 0)$ for all $(x, t) \in \mathbb{R}^N \times (0, T)$, 
we obtain, by Lemma \ref{TF_ACP_Property}, 
\begin{align*}
&u^\varepsilon (x, t) - \overline{u}(x, t) \\
&\leq u^\varepsilon (\hat{x}, 0) - \overline{u}(\hat{y}, 0) + \varepsilon \left\{ v^{\lambda} \left(\frac{x}{\varepsilon}, 0 \right) - v^{\lambda} \left(\frac{\hat{x}}{\varepsilon}, \frac{\hat{x} - \hat{y}}{\varepsilon^\beta} \right) \right\} + \frac{\delta |x|^2}{2} + \frac{R t^\alpha}{\Gamma(1 + \alpha)} \\
&\leq \Lip[u_0] |\hat{x} - \hat{y}| + \frac{C\varepsilon}{\lambda} \left( \frac{|\hat{x} - \hat{y}|}{\varepsilon^\beta} + 1 \right) + \frac{\delta |x|^2}{2} + (\varepsilon^{\theta- \frac{\beta(1-\nu)}{2-\nu}}  + \varepsilon^{1 - \theta - \beta}) \frac{R^\prime T^\alpha}{\Gamma(1 + \alpha)} \\
&\leq C (\varepsilon^\beta + \varepsilon^{1-\theta} + \varepsilon^{\theta - \frac{\beta(1-\nu)}{2-\nu}}+ \varepsilon^{1 - \theta - \beta}) + \frac{\delta |x|^2}{2}
\end{align*}
for all $(x, t) \in \mathbb{R}^N \times (0, T)$.
Therefore, by the optimal choice of parameters $\theta = \beta = \frac{1}{3}$, and sending $\delta \to 0$, we obtain
\begin{equation*}
u^\varepsilon(x, t) - \overline{u}(x, t) \leq C \varepsilon^{\frac{1}{3(2-\nu)}}.
\end{equation*}
By a symmetric argument, we get the desired result.
\end{proof}


\subsection*{Acknowledgments}
The authors would like to thank Olivier Ley for helpful comments and suggestions. 


\end{document}